\newtheorem*{lemma*}{Lemma}
\newtheorem{theorem}{Theorem}[section]
\newtheorem{corollary}[theorem]{Corollary}
\newtheorem{lemma}[theorem]{Lemma}
\newtheorem{proposition}[theorem]{Proposition}
\theoremstyle{definition}
\newtheorem{remark}[theorem]{Remark}
\DeclareMathOperator*{\argmax}{arg\,max}
\def\it{\textit}
\newcommand{\ri}{\rightarrow \infty}
\newcommand{\ro}{\rightarrow 0}
\newcommand{\E}{{\mathbb{E}}}
\newcommand{\1}{\mathds{1}}
\newcommand{\R}{\mathbb{R}}
\newcommand{\norm}[1]{\left\lVert#1\right\rVert}
\newcommand{\e}{\varepsilon}
\newcommand{\N}{\mathbb{N}}
	\renewcommand{\P}{\mathbb{P}}
\newcommand{\cA}{\mathcal{A}}
\newcommand{\cB}{\mathcal{B}}
\newcommand{\cC}{\mathcal{C}}
\newcommand{\cD}{\mathcal{D}}
\newcommand{\cE}{\mathcal{E}}
\newcommand{\cF}{\mathcal{F}}
\newcommand{\cG}{\mathcal{G}}
\newcommand{\cM}{\mathcal{M}}
\newcommand{\cS}{\mathcal{S}}
\newcommand{\cT}{\mathcal{T}}
\newcommand{\cU}{\mathcal{U}}
\newcommand{\sG}{\mathscr{G}}
\DeclareMathOperator{\argmin}{arg\,min}
\renewcommand{\setminus}{\backslash}
\def\ba{\begin{align}}
\def\ea{\end{align}}
\def\bs{\begin{split}}
\def\es{\end{split}}
\begin{document}

\title[Largest eigenvalue of sparse Gaussian networks]{Large deviations for the largest eigenvalue of Gaussian networks with constant average degree}

\author{Shirshendu Ganguly and Kyeongsik Nam}

\begin{abstract} 
Large deviation behavior of the largest eigenvalue $\lambda_1$ of Gaussian networks (Erd\H{o}s-R\'enyi random graphs $\cG_{n,p}$ with i.i.d. Gaussian weights on the edges) has been the topic of considerable interest. In the recent works \cite{guionnet1,guionnet2}, a powerful approach was introduced based on tilting measures by suitable spherical integrals to prove a large deviation principle, particularly establishing a non-universal behavior for a fixed $p<1$ compared to the standard Gaussian ($p=1$) case.  The case when $p\to 0$ was however completely left open {with one expecting the dense behavior to hold only until the average degree is logarithmic in $n$.} In this article we focus on the case of constant average degree i.e., $p=\frac{d}{n}$ for some fixed $d>0$. Results in \cite{van1} on general non-homogeneous Gaussian matrices imply that in this regime $\lambda_1$ scales like $\sqrt{\log n}.$ We prove the following results towards a precise understanding of the large deviation behavior in this setting.

\begin{enumerate}
\item (Upper tail probabilities and structure theorem): For $\delta>0,$ we pin down the exact exponent $\psi(\delta)$ such that $$\P(\lambda_1\ge \sqrt{2(1+\delta)\log n})=n^{-\psi(\delta)+o(1)}.$$
Further, we show that conditioned on the upper tail event, with high probability, a \emph{unique maximal clique} emerges with a very precise $\delta$ dependent size (takes either \emph {one} or \emph{two} possible values) and the Gaussian weights are uniformly high in absolute value on the edges in the clique. Finally, we also prove an optimal localization result for the leading eigenvector, showing that it allocates most of its mass on the aforementioned clique which is spread uniformly across its vertices. 
\item (Lower tail probabilities):  The exact stretched exponential behavior $$\P(\lambda_1\le \sqrt{2(1-\delta)\log n})=\exp\left(-n^{\ell(\delta)+o(1)}\right)$$ is also established.
\end{enumerate}
As an immediate corollary, one obtains that $\lambda_1$ is typically $(1+o(1))\sqrt{2\log n}$, a result which surprisingly appears to be new.  A key ingredient in our proofs is an extremal spectral theory for weighted graphs obtained by an $\ell_1-$reduction of the standard $\ell_2-$variational formulation of the largest eigenvalue via the classical Motzkin-Straus theorem \cite{turan}, which could be of independent interest. 
\end{abstract}

\address{ Department of Statistics, Evans Hall, University of California, Berkeley, CA
94720, USA} 

\email{sganguly@berkeley.edu }

\address{ Department of Mathematics,  University of California, Los Angeles, CA
94720, USA} 

\email{ksnam@math.ucla.edu }


 
\maketitle

{\small\tableofcontents}

\section{Introduction}
Spectral statistics arising from random matrices and their asymptotic properties have  %
 been the subject of major investigations for several years. 
Fundamental observables of interest include the %
 empirical spectral measure as well as edge/extreme eigenvalues. %
The study of such quantities began in the classical 
setting of the Gaussian unitary and orthogonal ensembles (GUE and GOE) 
where the entries are complex or real i.i.d. Gaussians up 
to symmetry constraints. These exactly solvable examples admit complicated but explicit joint densities 
for the eigenvalues which can be analyzed, albeit involving a lot of work,  to pin down the precise behavior 
of several observables of interest.

The central phenomenon driving this article is the atypical behavior  
of the largest eigenvalue of a random matrix. This falls within the framework of large deviations which has attracted immense interest over the past two decades.

Perhaps not surprisingly, this was first investigated in the above mentioned exactly solvable cases 
\cite{freeentropy,adg}. Subsequently, 
Bordenave and Caputo  \cite{bc} considered empirical distributions in Wigner matrices with entries with heavier 
tails where large deviations is dictated by  a relatively small number of large entries. This phenomenon was 
shown for
the largest eigenvalue as well in \cite{augeri2}.

Another set of random matrix models arise from random graphs, particularly the Erd\H{o}s-R\'enyi graph $\cG_{n,p}$ on $n$ %
vertices %
with edge probability $p \in (0, 1)$. The literature on the
study of such graphs is massive with a significant 
fraction devoted to the study of spectral properties. A 
long series of works  established universality results for the bulk and edge of the spectrum in random graphs of 
average degree at least logarithmic in the graph size  drawing similarities to the Gaussian counterparts 
(cf. \cite{erdHos2012spectral, erdHos2013spectral} and the references therein). For sparser graphs, however, 
including the case of constant average degree which is the focus of this article, progress has been relatively limited. Nonetheless, some notable accomplishments 
include the results in \cite{adk, bbk_dense, bbk, KS} about the  edge of the spectrum, as well as the results 
of \cite{bordenave2017mean} and \cite{empirical_neighborhood}, which studied %
continuity properties of the
limiting spectral measure 
and a large deviation theory of the 
related local limits, respectively.  

While large deviations theory for linear functions of independent random variables is by now classical (see \cite{DZ}),  recently a powerful theory of  \emph{non-linear} large deviations has been put forth, developed over several articles (some of which are reviewed below), which treats non-linear functions such as the spectral norm of a random matrix with i.i.d. entries.

Among the recent explosion of results around this, 
a series of works investigated spectral large deviations 
for $\cG_{n,p}$, beginning with Chatterjee and 
Varadhan \cite{CV12}, where the authors proved a large
 deviation principle for the entire spectrum of $\cG_{n, p}$ at scale  $np$,  building on their seminal work
 \cite{CV11}, in the case where $p$ is fixed and does not depend on $n$ (dense case).
  However, the sparse case where $p=p(n) \rightarrow 0$ was left completely open until a major breakthrough 
 was made by Chatterjee and Dembo \cite{CD16}.  
 This led to considerable progress in developing the theory of 
 large deviations for various functionals of interest for sparse random graphs \cite{augeri1,austin,BM17,eldan,yan}. 
Via a refined understanding of cycle counts in $\cG_{n,p}$ which was obtained in 
\cite{augeri1,ab_rb,BGLZ,CV11,CD18,upper_tail_localization,LZ-dense,LZ-sparse}, one can deduce large 
deviation properties for eigenvalues using the trace method and this was carried out in \cite{uppertail_eigenvalue}.   
However such arguments only extended to $p$ going to zero at a rate slower than $1/\sqrt n$,  
since cycle statistics fail to encode information about the spectral norm for sparser graphs.
Such sparser graphs were treated more recently in \cite{ganguly1}, where the first named author along
 with Bhaswar Bhattacharya and Sohom Bhattacharya analyzed the large deviations behavior for the spectral 
edge for sparse $\cG_{n,p}$ in the  \emph{entire} ``localized regime'' when 
\begin{equation}\label{eq:rangep}
\log n \gg \log (1/np) \quad \text{ and } \quad n p \ll \sqrt{\frac{\log n}{\log \log n}},
\end{equation} 
where the extreme eigenvalues are governed by high 
degree vertices. This notably includes the well studied 
example of constant average degree.

In a related direction, which merges the above two classical settings, a series of works \cite{guionnet1,guionnet2} have explored universality of large deviations behavior for the largest eigenvalue for a Wigner matrix with i.i.d sub-Gaussian entries. First in \cite{guionnet1},
 it was shown that 
if the 
Laplace transform
 of the entries is pointwise 
 bounded by those of a standard real or complex Gaussian, then
a universal large deviation principle (LDP) same as in the  Gaussian case holds. 
 Examples of this include Rademacher variables and uniform variables. 
 However the situation changes  when the sub-Gaussian tails are not sharp. Perhaps 
 the most interesting examples in this class are sparse Gaussian matrices whose entries are obtained
by multiplying a Gaussian variable with an independent Bernoulli random variable with mean $p$.
 In \cite{guionnet2}, this more general setting is investigated and it is shown that the rate function for the
LDP can indeed be different from the Gaussian case. 
The approach in both \cite{guionnet1} and \cite{guionnet2} broadly relies on considering appropriate tilts of the original measures and analyzing the associated spherical integrals.
However, the above approach has been shown to work only in  the `dense' case of constant $p$ where the typical behavior is still the same as when $p=1,$ leaving the sparse regime $p\ll 1$ completely open. 

In the case of the random graph $\cG_{n,p}$, as established in \cite{KS, erdHos2012spectral,erdHos2013spectral}, at the law of large numbers level,  $\lambda_1=(1+o(1))\max(d_1, np)$
where $d_1$ denotes the maximum degree of the 
random graph. Consequently $\lambda_1$ exhibits a transition at $np=\sqrt{\frac{{\log n}}{\log \log n}}$, where 
the largest eigenvalue begins to be governed by the largest degree.
A similar phenomenon reflecting this transition for large deviations was established across the papers \cite{uppertail_eigenvalue, ganguly1}.
{In the case of Gaussian ensembles, although a precise result does not appear in the literature to the best of the 
authors' knowledge, it is expected that the dense behavior extends to the case of the average degree 
being logarithmic in $n$ (an analogous result for Wigner matrices with bounded entries, which is more 
comparable to the setting of random graphs, was established in \cite{tikhomirov}).  Beyond this, as the graph becomes sparser, a different behavior is expected to emerge.}

This motivates the present work where we obtain a very precise understanding of the  case of constant average 
degree, i.e., $p=\frac{d}{n},$ arguably the most 
interesting sparse case because of its connections to various models of statistical mechanics.

Also relevant to this paper is a  different line of research, which, motivated by viewing a random matrix as a random linear operator, considers `non-homogeneous' matrices. The most well studied example is a Gaussian matrix where the variance varies from entry to entry. In this general setting, even the leading order behavior for the spectral norm is far from obvious and requires a much more refined understanding beyond the concentration of measure bounds obtained as a consequence of the non-commutative Khintchine inequality. A beautiful conjecture posed by Latala \cite{latala} related to an earlier result of Seginer \cite{seginer} states that the expected spectral norm for such non-homogeneous Gaussian matrices, is up to constants the expectation of the {maximum $\ell_2$ norm of a row and a column}, and after a series of impressive accomplishments \cite{van1,van2}, the conjecture was finally settled in the beautiful work \cite{van3}.

Note that sparse Wigner matrices, quenching on the sparsity, falls in the above framework where the variance of each entry is $0$ or $1.$ It is worth mentioning that while the dependence on $n$ in the leading order behavior is pinned down in the above mentioned works, the techniques are not sharp enough to unearth finer properties such as the exact constant multiplicative pre-factor.\\

We now move on to the statements of the main theorems after setting up some basic notations. 
\subsection{Setup and main results:}
We will denote  by $\sG_n$ denote the set of all simple, undirected networks on $n$ vertices labelled $[n]:=\{1, 2, \ldots, n\}$ i.e., simple graphs with a conductance value on each edge. 
For $G \in \sG_n,$ denote by $A(G)=(a_{ij})_{1 \leq i, j \leq n},$ the adjacency matrix of $G$, that is $a_{ij}$ is
the conductance associated to the edge $(i,j)$  if the latter is an edge in $G$, and 0 otherwise.
Thus graphs are trivially encoded as networks where the entries of $A$ are $0$ or $1.$
For $F \in \sG_n$, since $A(F)$ is a self-adjoint matrix, denote by $\lambda_1(F) \geq \lambda_2(F) \geq \cdots \geq \lambda_n(F)$
its eigenvalues in non-increasing order, and let $\|F\|_{\rm op}:=\|A(F)\|_{\rm op}=\max\{|\lambda_1(F)|, |\lambda_n(F)|\}$ be the operator norm of $A$. 
Throughout most of the paper we will be concerned with $\lambda_1(F)$ and for notational brevity we will often drop the subscript to denote the same.

{In this paper we are interested in the sparse 
Erd\H os-R\'enyi random graph $\cG_{n, p}$, where $p=\frac{d}{n}$ for some $d>0$ which does not depend on $n$.
 We will denote by $X$ the random adjacency matrix associated to it. 
Thus for all $1\le i<j\le n$, $X_{i,j}$ is an independent 
Bernoulli random variable with mean $p,$ and $X_{ii}=0$ for all $i.$}
Let $Y$ be a standard GOE matrix, i.e. $Y_{ij}\sim N(0,1)$ for $i\le j$. The matrix of interest for us is $Z=X\odot Y,$ i.e.,  $Z_{ij}=X_{ij}Y_{ij}.$

Let $\lambda_1\geq \lambda_2 \geq \cdots \geq \lambda_n$ be eigenvalues of the matrix $Z$.
As a consequence of the already referred to work on the 
behavior of the spectral norm of general 
inhomogeneous Gaussian matrices \cite{van1},  it 
follows that \begin{align}\label{lln}
\E(\lambda_1) \approx \sqrt{\log n}.
\end{align}
One also obtains concentration around $\E(\lambda_1)$ using standard Gaussian techniques, see e.g. \cite[Corollary 3.9]{van1}. However so far, the methods have not been able to obtain a sharper understanding including the precise constant in front of $\sqrt{\log n}$ which we deduce as a simple corollary of our main theorems.
We now move on to the exact statements of the results in this paper. 
%
%
%
%
%
%
\begin{theorem}[Upper tail probabilities]\label{main theorem 1}
For $\delta>0$, define a function $\phi_\delta : \mathbb{N}_{\geq 2} \rightarrow \R$\footnote{
$\N$ will be used to denote the set of natural numbers, and $\N_{\ge k}$ to denote all the natural numbers bigger equal to $k.$}
 by
\begin{align}\label{keydef}
{\phi_\delta (k):= \frac{k(k-3)}{2} + \frac{1+\delta}{2} \frac{k}{k-1}}
\end{align}
and $\psi(\delta) := \min_{k\in\mathbb{N}_{\geq 2}} \phi_\delta (k)$. Then,
\begin{align} \label{112}
\lim_{n\ri}   -\frac{1}{\log n} \log  \mathbb{P}(\lambda_1 \geq \sqrt{2(1+\delta) \log n}  )=    \psi(\delta).
\end{align}
\end{theorem}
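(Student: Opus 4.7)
The exponent $\psi(\delta)$ arises from a trade-off between planting a clique of some size $k \geq 2$ in the random graph $X$ and boosting the Gaussian weights on its edges. For each fixed $k$, the expected number of $k$-cliques in $\cG_{n,d/n}$ is $\binom{n}{k}(d/n)^{\binom{k}{2}} = \Theta(n^{-k(k-3)/2})$, and conditional on such a clique $K$, demanding $|Z_{ij}| \ge t_k$ for every $(i,j) \in K$, with $t_k := \sqrt{2(1+\delta)\log n}/(k-1)$, costs a further Gaussian factor
\begin{align*}
\bigl(\P(|N(0,1)| \ge t_k)\bigr)^{\binom{k}{2}} = n^{-(1+\delta)k/(2(k-1)) + o(1)}.
\end{align*}
Evaluating the quadratic form $v^\top Z v$ at the uniform unit vector supported on $K$ already gives $\lambda_1(Z) \ge (k-1)t_k = \sqrt{2(1+\delta)\log n}$, so the two factors combine to $n^{-\phi_\delta(k)+o(1)}$, and optimizing over $k \in \N_{\geq 2}$ identifies the candidate exponent $\psi(\delta)$.

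For the lower bound, fix any $k^\star \in \arg\min_{k\geq 2} \phi_\delta(k)$ and a small $\eta > 0$, and let $N$ count ordered $k^\star$-tuples of vertices that form a clique of $X$ whose $\binom{k^\star}{2}$ edge weights all satisfy $|Z_{ij}| \ge (1+\eta) t_{k^\star}$. By the above calculation, $\E[N] = n^{-\psi(\delta)+o(1)}$. Overlapping pairs of candidate cliques are suppressed by additional powers of $1/n$ from the sparsity of $X$ together with extra Gaussian tail factors on the shared edges, so a routine second-moment bound gives $\E[N^2] \le (1+o(1))\E[N]^2 + n^{o(1)}\E[N]$. Paley--Zygmund then yields $\P(N \ge 1) \ge n^{-\psi(\delta)+o(1)}$, and on $\{N \geq 1\}$ one has $\lambda_1(Z) \ge (1+\eta)(k^\star-1) t_{k^\star} > \sqrt{2(1+\delta)\log n}$; sending $\eta \downarrow 0$ completes the lower bound.

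The upper bound is the main challenge, and the plan is to exploit the Motzkin--Straus $\ell_1$-reduction of the $\ell_2$-variational formula $\lambda_1(Z)=\max_{\|v\|_2=1} v^\top Z v$ advertised in the abstract. The target is a structural statement: on the event $\{\lambda_1(Z) \ge \sqrt{2(1+\delta)\log n}\}$, apart from probability $o(n^{-\psi(\delta)})$, there must exist some $k \ge 2$ and a clique $K\subseteq X$ of size $k$ satisfying $\min_{(i,j)\in K} |Z_{ij}| \gtrsim t_k$. Granted this, a union bound over $k$ and over the at most $\binom{n}{k}$ candidate vertex subsets, combined with independence to multiply by $(d/n)^{\binom{k}{2}}$ for the clique edges and $n^{-(1+\delta)k/(2(k-1))+o(1)}$ for the weights, yields $n^{-\phi_\delta(k)+o(1)}$ per $k$ and $n^{-\psi(\delta)+o(1)}$ after minimizing.

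The hard part will be making this structural extraction quantitative, since the top eigenvector of $Z$ need not be supported on a clique and the light entries contribute non-trivially to $v^\top Z v$. The plan is to decompose the entries of $Z$ dyadically by magnitude, invoke a combinatorial input to the effect that in $\cG_{n,d/n}$ every small subgraph of density exceeding that of a tree is essentially a clique (a tangle-free style statement in the spirit of \cite{ganguly1}), and within each dyadic level apply the Motzkin--Straus inequality, which bounds the top eigenvalue of a weighted adjacency matrix by (maximum entry)$\cdot(\omega(G)-1)$ once the support $G$ is clique-like. The residual contribution from the bulk of entries of size $O(\sqrt{\log n})$ can be absorbed as a subleading error via the non-homogeneous Gaussian norm estimates of \cite{van1}.
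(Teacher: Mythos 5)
Your lower-bound argument (the easy direction, showing $\P(\cU_\delta) \ge n^{-\psi(\delta)+o(1)}$) is essentially identical to the paper's: plant a clique of size $k^\star$, force all its $\binom{k^\star}{2}$ Gaussians above $t_{k^\star}$, and lower-bound the quadratic form at the flat vector on the clique; the second-moment step matches the paper's Lemma~\ref{lemma 31}. That part is fine.

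The upper bound contains a genuine gap. First, the version of Motzkin--Straus you invoke, \emph{``bounds the top eigenvalue of a weighted adjacency matrix by (maximum entry)$\cdot(\omega(G)-1)$ once the support is clique-like,''} is not a theorem: for a dense graph the largest eigenvalue vastly exceeds $\omega(G)-1$. What Motzkin--Straus actually yields after the $\ell_2\to\ell_1$ reduction is the Frobenius-norm bound $\lambda_1(A)^2 \le \frac{k-1}{k}\|A\|_F^2$ (Proposition~\ref{spectral bound} in the paper), and the entire machinery rests on that quadratic inequality, not on a max-entry bound. Second, your proposed ``structural extraction'' --- conclude that on $\cU_\delta$ there must be a clique with uniformly large $|Z_{ij}|$, then union bound --- is the content of Theorems~\ref{theorem structure} and~\ref{uniform gaussian}, which the paper proves \emph{after} and \emph{using} the upper bound of Theorem~\ref{main theorem 1}; you cannot take it as an input. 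Third, the dyadic decomposition by entry magnitude and the suggestion to absorb the bulk via \cite{van1} cannot produce the correct constant in front of $\sqrt{\log n}$ (the paper explicitly notes that the non-homogeneous norm estimates are not sharp at that level of precision). The paper's actual route is different and addresses subtleties your sketch does not engage with: a single threshold $|Y_{ij}| > \sqrt{\varepsilon\log\log n}$ produces a much sparser $X^{(1)}$ whose components have size $O_\varepsilon(\log n/\log\log n)$ and bounded tree excess, so $\|Z^{(1)}\|_F^2$ restricted to a component is a sum of \emph{conditioned} Gaussian squares; the conditioning makes the tail heavier by an $\varepsilon$-dependent factor that interacts badly with the $1/\varepsilon$ in the component size, which forces a further decomposition of the leading eigenvector into high and low entries and Lemma~\ref{lemma 03} on trees. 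None of those obstructions, or their resolutions, appear in your plan, so as written the upper bound does not go through.
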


\begin{remark}[Infinite phase transition in upper tail] \label{remark 1.2}
The rate function given by  \eqref{112} is a continuous piecewise linear function with infinitely many pieces which we now describe in detail. 
Since we will only be concerned about the $\argmin$ restricted to integers larger than $1,$ we consider momentarily $\phi_\delta(x)  = \frac{x(x-3)}{2} + \frac{1+\delta}{2} \frac{x}{x-1} $ as a function of real numbers greater than one and notice that,
\begin{align} \label{104}
\phi'_\delta(x)= x-\frac{3}{2} - \frac{1+\delta}{2}\frac{1}{(x-1)^2}.
\end{align}
Thus $\phi_\delta(x)$ is a strictly convex function.
Let
$\cM(\delta) = \{\argmin_{k\geq 2} \phi_\delta(k)\}$ be the set of minimizers of $\phi_\delta(\cdot)$. By the strict convexity of $\phi_\delta(\cdot)$, $\cM(\delta)$ is at most of size $2$ containing either a single element or two consecutive integers. Precisely, denoting by $x(\delta)>1$, the unique solution to $\phi_\delta'(x)=0$, any element in $\cM(\delta)$ is either $\left \lfloor{x(\delta)}\right \rfloor $ or $\left \lceil{x(\delta)}\right \rceil $.
Now the values of $\delta$ for which $\cM(\delta)$ is of size two forms a discrete set. That is, there exists $0=\delta_1<\delta_2<\delta_3<\cdots$ such that the following holds: for any positive integer $k\geq 2$,  $ (\delta_{k-1}, \delta_{k})$ is the collection of $\delta$ such that $ \cM(\delta) = \{k\}$ and $\delta_{k}$ is the unique $\delta$ such that    $ \cM(\delta) = \{k,k+1\}$.   To see this, since $\delta \mapsto x(\delta)$ is strictly increasing, it suffices to verify  that the situation $\delta_1<\delta_2$, $\phi_{\delta_1}(k+1)\leq \phi_{\delta_1}(k)$ and  $\phi_{\delta_2}(k)\leq \phi_{\delta_2}(k+1)$ never occurs. Observe that the contrary implies 
\begin{align*}
\phi_{\delta_1}(k+1)\leq \phi_{\delta_1}(k) \leq \phi_{\delta_2}(k)\leq \phi_{\delta_2}(k+1).
\end{align*}
By \eqref{104},  $\phi_{\delta_1}'(x) > \phi_{\delta_2}'(x)$,  which contradicts the above.

Hence, for  $\delta \in [\delta_{k-1}, \delta_{k}]$,
\begin{align*}
\psi(\delta) = \frac{1+\delta}{2}\frac{k}{k-1} +  \frac{k(k-3)}{2},
\end{align*}
which is a linear function in $\delta \in [\delta_{k-1}, \delta_{k}]$ for any  fixed $k\geq 2$. This implies that $\psi(\delta)$ is a continuous piecewise linear function.

Also by a simple algebra, it follows from \eqref{104} that $$( \frac{1+\delta}{2})^{1/3} + 1 < x(\delta) < ( \frac{1+\delta}{2})^{1/3} + \frac{3}{2}.$$ Since $$\phi_\delta \Big ( \Big(\frac{1+\delta}{2}\Big)^{1/3}\Big)  =  \frac{1}{2}\delta + \frac{3}{2^{5/3}} \delta^{2/3}+ O(\delta^{1/3}),$$ we obtain 
\begin{align} \label{103}
\psi(\delta) = \frac{1}{2}\delta + \frac{3}{2^{5/3}} \delta^{2/3} +  O(\delta^{1/3}) \qquad \text{as} \quad \delta\ri,
\end{align}
where 
$O(\delta^{1/3})$ is a quantity bounded by $C \delta^{1/3}$ for some absolute constant $C>0$. 
Plugging this into \eqref{112}, one thus obtains the following asymptotic behavior of the upper tail probabilities\footnote{Throughout the paper, $o(1)$ will be used to denote functions of $n$ that tend to $0$ as $n$ tends to infinity. However we will also need to deal with quantities that go to zero as $\delta$ converges to infinity, which would be denoted by $o_\delta(1).$}
{\begin{align}
\label{largedelta}
 \mathbb{P}(\lambda_1 \geq \sqrt{2(1+\delta) \log n}  ) &= n^{- ( \frac{1}{2}\delta + \frac{3}{2^{5/3}} \delta^{2/3}+O(\delta^{1/3}))}
\text{ for large $\delta>0$, and,}\\
\label{smalldelta}
 \mathbb{P}(\lambda_1 \geq \sqrt{2(1+\delta) \log n}  ) &= n^{-\delta+o(1) } \text{ for small $\delta>0$.}
\end{align}
}
\end{remark}

\begin{remark}[Comparison with maximum of i.i.d. Gaussians] As the reader possibly already notices, for small $\delta$, the behavior in \eqref{smalldelta} is the same as that for the maximum of $n$ many standard Gaussian variables. {The reason for this will be discussed in the idea of proofs section}. 
\end{remark}

Having established the sharp order of the tail probabilities, we now state three results establishing a sharp structural behavior conditioned on the upper tail event $\cU_{\delta}:=\{\lambda_1  \geq   \sqrt{2(1+\delta) \log n}\},$ thus unearthing the dominant mechanism dictating upper tail large deviations. 
The first result shows the existence of a clique of a very precise $\delta$ dependent size establishing a sharp concentration for the maximal clique size conditioned on $\cU_\delta$.
For any graph $G\in \cG_n,$ let $k_G$ be the size of a maximal clique $K_G$ in $G$. {Recall the definition of $\cM(\delta)$ and let 
$h(\delta)$ be the smallest element of $\cM(\delta)$.
By Remark \ref{remark 1.2},
{\begin{align}\label{argmaxloc}
\Big\vert h(\delta) - \Big( \frac{1+\delta}{2}\Big)^{1/3} - 1 \Big\vert \leq 2.
\end{align}
}}
\begin{theorem} [Structure theorem] \label{theorem structure}
For any $\delta$ with $h(\delta) \geq 3,$ i.e., $\delta>\delta_2$ (see Remark  \ref{remark 1.2} for the definition of $\delta_k$),
\begin{align} \label{150}
\lim_{n\ri} \mathbb{P} \Big( k_X \in \cM(\delta) \mid \cU_\delta   \Big)  = 1.
\end{align}
Furthermore, with conditional probability tending to one, $K_X$ is unique and any clique of size at least $4$ is a subset of $K_X.$
\end{theorem}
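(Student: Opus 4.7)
The plan is to deduce both parts of the theorem --- the size statement $k_X \in \cM(\delta)$ and the uniqueness/containment assertion --- by refining the sharp decomposition underlying the proof of Theorem \ref{main theorem 1}. In essence, I would sharpen the argument establishing \eqref{112} into a per-clique-size estimate, then complement it with a first-moment calculation ruling out extraneous cliques.

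For the size statement, the proof of the upper bound in Theorem \ref{main theorem 1} shows, for each $k \ge 2$, that the contribution to $\mathbb{P}(\cU_\delta)$ coming from configurations whose maximum clique has size $k$ is at most $n^{-\phi_\delta(k) + o(1)}$. The intuition is that to realize $\lambda_1 \ge \sqrt{2(1+\delta)\log n}$ with densest substructure equal to a $k$-clique, one must tilt the $\binom{k}{2}$ Gaussian weights on that clique to absolute value of order $\sqrt{2(1+\delta)\log n}/(k-1)$, contributing a factor $n^{-(1+\delta)k/(2(k-1)) + o(1)}$; combined with the cost $\binom{n}{k}(d/n)^{\binom{k}{2}} = n^{-k(k-3)/2 + o(1)}$ of producing the clique, the exponent is precisely $\phi_\delta(k)$. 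Matching this with the lower bound $\mathbb{P}(\cU_\delta) \ge n^{-\psi(\delta) + o(1)}$ yields
\begin{equation*}
\mathbb{P}(k_X = k \mid \cU_\delta) \le n^{-(\phi_\delta(k) - \psi(\delta)) + o(1)},
\end{equation*}
which is $o(1)$ for every $k \notin \cM(\delta)$. As $\phi_\delta(k) \sim k^2/2$, only finitely many $k$'s contribute non-negligibly, and summing over these establishes \eqref{150}.

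For uniqueness and containment, I would first use the optimizer analysis inside Theorem \ref{main theorem 1} (which, via the Motzkin--Straus $\ell_1$-reduction advertised in the abstract, singles out one clique as the primary witness of the large eigenvalue) to produce a \emph{designated} clique $K \subseteq [n]$ of size $|K| \in \cM(\delta)$ on whose edges the absolute values of the Gaussian weights are uniformly $(1+o(1))\sqrt{2(1+\delta)\log n}/(|K|-1)$. Given this designated $K$ together with its weights, the remaining edges of $X$ and remaining entries of $Y$ are, up to a total-variation correction absorbed by the ubiquitous $n^{o(1)}$ slack, distributed as an unconditioned sparse Erd\H{o}s--R\'enyi graph $\cG_{n-|K|, d/n}$ with independent GOE weights. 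A direct first-moment count in this unconditioned system shows that, for $\ell \ge 4$ and $0 \le j < \ell$, the expected number of cliques of size $\ell$ sharing exactly $j$ vertices with $K$ is
\begin{equation*}
\binom{|K|}{j}\binom{n-|K|}{\ell-j}\Big(\frac{d}{n}\Big)^{\binom{\ell}{2}-\binom{j}{2}} = O\bigl(n^{\ell - j - \binom{\ell}{2} + \binom{j}{2}}\bigr),
\end{equation*}
and a short calculation verifies the exponent is strictly negative for every such $(j,\ell)$. Consequently, with high conditional probability no clique of size $\ge 4$ exists either outside $K$ or overlapping $K$ non-trivially, whence $K_X = K$ is unique and every clique of size $\ge 4$ is contained in $K_X$.

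The main obstacle is the second step: strictly speaking, conditioning on $\cU_\delta$ does not decouple the designated $K$ from its complement, so the claim that the complement behaves like an unconditioned Erd\H{o}s--R\'enyi graph requires justification. The route I would follow is to discretize the choice of the vertex set of $K$ and its Gaussian weights on a fine grid of accuracy $o(1)$, and for each such value verify that the conditional law on the complement is total-variation close to the product measure; the polynomial slack $n^{o(1)}$ in the preceding estimates absorbs this correction. A subtle point within this is excluding \emph{hybrid} near-optimizers in which a clique of size $h(\delta)$ has only most of its edges at the requisite threshold, compensated by a few exceptionally large entries elsewhere --- precisely the kind of non-uniformity that the Motzkin--Straus $\ell_1$-reduction is set up to rule out.
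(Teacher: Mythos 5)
Your argument for the size statement $k_X\in\cM(\delta)$ mirrors the paper's: the upper-tail upper bound in Theorem \ref{main theorem 1} is already carried out one clique size at a time, yielding the per-$k$ cost $n^{-\phi_\delta(k)+o(1)}$, and comparing with the matching lower bound $n^{-\psi(\delta)+o(1)}$ kills every $k\notin\cM(\delta)$. That part is sound (modulo one small point: the paper establishes this for the clique $\bar k$ of the \emph{sparsified} graph $X^{(1)}$ and must then argue that superimposing $X^{(2)}$ does not create a larger clique, which is part of the second half of the proof rather than a footnote).

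For uniqueness and containment, however, you have correctly flagged the real obstacle but proposed a route the paper deliberately avoids, and I do not believe your route goes through as stated. You propose to condition on the designated clique $K$ together with its Gaussian weights and then argue that the remaining edges of $X$ and remaining Gaussians are "total-variation close" to an unconditioned $\cG_{n-|K|,d/n}\odot\mathrm{GOE}$, with the $n^{o(1)}$ slack absorbing the error. This is not true in any obvious sense: conditioning on $\cU_\delta$ (and on the optimizing clique being $K$) does bias the complement — for instance, it suppresses the appearance of a second competing near-optimal clique, exactly the kind of configuration you need to rule out. Making the TV bound precise would require controlling ratios of probabilities of rare events, which is essentially as hard as the original problem; the "discretize $K$ and its weights" idea does not sidestep this. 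The $n^{o(1)}$ slack that freely floats around in the exponent calculations is not the right currency here, because TV error is additive, not multiplicative in the exponent.

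The paper's route is structurally different and cleaner. It never conditions on $K$ or its weights. Instead it exploits the sparsification decomposition $X=X^{(1)}\cup X^{(2)}$ from Section \ref{s:iop}: given $X^{(1)}$, the spectral statistic $\lambda_1(Z^{(1)})$ (which, after showing $Z^{(2)}$ is spectrally negligible, is what drives $\cU_\delta$) depends only on the Gaussians on the edges of $X^{(1)}$, while the extra edges $X^{(2)}$ are, given $X^{(1)}$, an \emph{exactly} independent Bernoulli percolation with parameter at most $\frac{2d}{n}$. So the paper conditions on the sigma-algebra generated by $X^{(1)}$ (not on the clique), on a high-probability structural event $\cF_1$ about $X^{(1)}$ (which already forces the maximal clique of $X^{(1)}$ to be unique and forces all other subgraphs to be sparse in the sense of \eqref{156}), and then does a first-moment bound over the independent $X^{(2)}$ to show no extraneous clique of size $\geq 4$ is created. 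No total-variation estimate is needed, and the "hybrid near-optimizer" worry you raise is handled upstream, inside Proposition \ref{prop} via the Motzkin--Straus $\ell_1$-reduction, rather than at the decoupling stage. In short: right obstacle, wrong tool; the correct tool is the conditional independence of $X^{(2)}$ and $Z^{(1)}$ given $X^{(1)}$.
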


{Note that the above statement in particular implies that the largest clique outside $K_X$ is a triangle whose occurrence has constant probability.}
Thus the above result proves a two point concentration for the maximal clique size and for values of $\delta$ such that $\cM(\delta)$ only contains $h(\delta),$ it implies a one point concentration.

Our next result asserts that the most of the contribution to the spectral norm comes from $K_X,$ with the Gaussians along the edges of the latter being uniformly high in absolute value. 
 
\begin{theorem}[Uniformly high Gaussian values] \label{uniform gaussian}
There exists    $\zeta = \zeta(\kappa)>0$ with $\lim_{\kappa \rightarrow 0} \zeta = 0 $ such that  the following holds. 
For $\kappa>0$, {for $\delta$ large enough}, with probability (conditional on $\cU_{\delta}$) going to $1,$ there exists $T\subset K_X$ such that $|T|\ge (1-\kappa)h(\delta)$ and
\begin{align}
   \frac{1}{h(\delta)^2}   \sum_{i\neq j, i,j\in T} \Big\vert |Z_{ij}|  -  \frac{1}{h(\delta)}  \sqrt{2(1 + \delta)\log n}  \Big\vert  \leq \frac{\zeta}{h(\delta)}  \sqrt{2(1 + \delta)\log n} .
\end{align}
\end{theorem}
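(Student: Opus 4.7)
The plan is to combine Theorem~\ref{theorem structure} with a quantitative Gaussian concentration argument around the minimizer of a natural Frobenius-norm variational problem on the clique. Set $L := \sqrt{2(1+\delta)\log n}$, $k := h(\delta)$, and $M := Z|_{K_X \times K_X}$. By Theorem~\ref{theorem structure}, conditionally on $\cU_\delta$ with high probability $K_X$ is the unique maximal clique, of size $k$, and no clique of size $\ge 4$ exists outside $K_X$. Combining the non-homogeneous Gaussian operator-norm bounds of \cite{van1} with the maximum degree estimate for $\cG_{n, d/n}$, the part of $Z$ supported off $K_X$ has operator norm $O(\sqrt{\log n})$, which is $o(L)$ for $\delta$ large. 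Hence conditionally on $\cU_\delta$ we have $\lambda_1(M) \ge L(1 - o_\delta(1))$.

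The next step is to establish the variational identity
\[
\min\{\|N\|_F^2 \,:\, N = N^T \in \R^{k \times k},\, N_{ii} = 0,\, \lambda_1(N) \ge L\} = \frac{L^2 k}{k - 1},
\]
attained uniquely (up to conjugation by a diagonal $\{\pm 1\}$ matrix) by $N^*$ with $N^*_{ij} = L/(k-1)$ for $i \ne j$. Lagrange multipliers give, for fixed unit $v$, $\min\{\|N\|_F^2 : v^T N v \ge L,\, N_{ii}=0\} = L^2/(1 - \|v\|_4^4)$, and $\max_{\|v\|_2 = 1}(1 - \|v\|_4^4) = 1 - 1/k$ by Cauchy--Schwarz, attained at $v_\star := \mathbf{1}/\sqrt{k}$. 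Standard Gaussian LDP on the $\binom{k}{2}$ independent off-diagonal entries of $M$ then yields $\P(\lambda_1(M) \ge L) = n^{-k(1+\delta)/(2(k-1)) + o(1)}$, matching Theorem~\ref{main theorem 1}, while for any $\alpha > 0$,
\[
\P\!\left(\lambda_1(M) \ge L,\, \|M\|_F^2 \ge \tfrac{L^2 k}{k-1} + \alpha \log n\right) \le n^{-k(1+\delta)/(2(k-1)) - c\alpha + o(1)}
\]
for some absolute constant $c > 0$. Consequently, conditional on $\cU_\delta$, the event $\|M\|_F^2 > \frac{L^2 k}{k-1} + \alpha \log n$ has vanishing probability for each fixed $\alpha > 0$.

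To upgrade this Frobenius-norm saturation to entrywise control, note that $\lambda_1(M) \ge L(1 - o_\delta(1))$ together with $\|M\|_F^2 \le \frac{L^2 k}{k-1} + \alpha \log n$ forces the top Perron eigenvector $v$ of $|M|$ to satisfy $\|v\|_4^4 \le 1/k + O(\alpha/(1+\delta))$. A quadratic Poincar\'e-type lower bound for $\|v\|_4^4 - 1/k$ around $v_\star$ on the unit sphere (reflecting the strict convexity of the Step 2 optimum) then pins $v$ within $O(\sqrt{\alpha/(1+\delta)})$ of $v_\star$ in $\ell_2$, and spectral perturbation produces a sign pattern $\epsilon \in \{\pm 1\}^k$ with $\sum_{i \ne j}(M_{ij} - \epsilon_i \epsilon_j L/(k-1))^2 = O(\alpha \log n)$. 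By $||a| - |b|| \le |a - b|$ and Cauchy--Schwarz,
\[
\frac{1}{k^2}\sum_{i \ne j,\, i,j \in K_X}\Big||M_{ij}| - \tfrac{L}{k-1}\Big| = O\!\Big(\sqrt{\tfrac{\alpha}{1+\delta}}\Big)\frac{L}{k},
\]
and since $|L/(k-1) - L/k| = O(L/k^2)$ is absorbed for $k = h(\delta)$ large, the same bound (with a slightly larger prefactor) holds with $L/k$ in place of $L/(k-1)$. Finally, excising a $\kappa$-fraction of vertices incident to the largest pointwise deviations yields $T \subset K_X$ with $|T| \ge (1-\kappa)k$ on which the average deviation is bounded by $\zeta(\kappa) L/k$, with $\zeta(\kappa) \downarrow 0$ as $\kappa \downarrow 0$ (achieved by choosing $\alpha = \alpha(\kappa) \downarrow 0$).

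The principal obstacle is the last step: converting the global $L^2$ bound $\|M - N^*_\epsilon\|_F = O(\sqrt{\alpha \log n})$ into a genuinely pointwise statement requires a robust Poincar\'e inequality for $\|v\|_4^4$ about the uniform vector and a uniformization over the discrete sign symmetry $\{\pm 1\}^k$. This quantitative stability of the extremal spectral problem is where the $\ell_1$--$\ell_2$ reduction via Motzkin--Straus, advertised in the abstract, is expected to play its most important role.
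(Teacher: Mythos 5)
Your high-level strategy—sharply control the Frobenius norm $\sum M_{ij}^2$ on the clique, combine with eigenvector flatness, and conclude entrywise control via near-equality in Cauchy--Schwarz—is the right intuition and shares the two key ingredients of the paper's proof (an $\ell_2$ saturation statement and the flatness of the leading eigenvector from Theorem \ref{eigenvectorloc}). However, there are two real gaps, and the paper resolves them by a different route at the final step.

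First, your probability estimates treat the $\binom{k}{2}$ off-diagonal entries of $M = Z|_{K_X \times K_X}$ as unconditioned i.i.d.\ Gaussians when computing $\P(\lambda_1(M)\ge L,\ \|M\|_F^2 \ge \tfrac{L^2k}{k-1}+\alpha\log n)$. Conditional on $\cU_\delta$ the clique $K_X$ is itself random and the whole matrix $Z$ (not just $M$) drives the event, so the comparison to $n^{-\psi(\delta)+o(1)}$ requires disentangling the clique cost from the Gaussian cost, controlling contributions from outside $K_X$, and handling the truncation $|Y_{ij}|>\sqrt{\e\log\log n}$ (which distorts the chi-square tail by exactly the amount that matters at the exponential scale). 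The paper does this through the decomposition $Z=Z^{(1)}+Z^{(2)}$, the graph events $\cF_0,\cF_2$, and the conditional chi-square tails of Lemma \ref{chi tail}; simply asserting the unconditional Gaussian LDP does not suffice. The "off-$K_X$ negligibility" claim has the same issue: it needs to hold conditionally on $\cU_\delta$, which is what Theorem \ref{eigenvectorloc} supplies.

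Second, and more fundamentally, the "spectral perturbation produces a sign pattern $\epsilon\in\{\pm1\}^k$ with $\sum_{i\ne j}(M_{ij}-\epsilon_i\epsilon_jL/(k-1))^2=O(\alpha\log n)$" is exactly the step that fails as stated. Frobenius saturation gives $\sum_i (v_i^2-1/k)^2 \le \eta$ and $\sum_{i\ne j}(M_{ij}-cv_iv_j)^2 \le \eta F$, but converting $v_i^2 \approx 1/k$ (in $\ell_2$) into $v_iv_j\approx \pm 1/k$ entrywise runs into precisely the degenerate indices with $v_i\approx 0$: each such index contributes $\approx 1/k^2$ to $\sum(v_i^2-1/k)^2$, so there can be up to $O(\eta k^2)$ of them, and on the corresponding rows $cv_iv_j$ is far from $\pm L/(k-1)$. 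You flag this as an obstacle, but the Poincar\'e-type inequality you invoke is not available without first throwing away these bad indices — which is exactly the role of $T$. The paper sidesteps the whole spectral-perturbation route: after establishing a sharp two-sided $\ell_2$ bound on $\sum (Z^{(1)}_{ij})^2$ over the high-eigenvector part $B_2$ (event $\cA_3$, your "Frobenius saturation"), it establishes a sharp two-sided $\ell_1$ bound on $\sum_{i\ne j,\ i,j\in T}|Z^{(1)}_{ij}|$ (event $\cA_4$), where the crucial $\ell_1$ lower bound comes from the observation that $v^TZ^{(1)}v\gtrsim L$ plus $v_i^2\approx 1/\bar k$ on $T$ forces $\frac{1}{\bar k}\sum_{T}|Z^{(1)}_{ij}|\gtrsim L$. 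Then the flatness of $|Z_{ij}|$ follows from the elementary identity
\begin{equation*}
\tfrac12\sum_{i\ne j,\,i'\ne j'}(|Z_{ij}|-|Z_{i'j'}|)^2 = |T|(|T|-1)\Big(\sum|Z_{ij}|^2\Big) - \Big(\sum|Z_{ij}|\Big)^2,
\end{equation*}
applied with the matching $\ell_1$ and $\ell_2$ bounds. This avoids both the sign-symmetry and the near-zero-$v_i$ difficulties entirely — they never appear, because one never needs to compare $M$ to a fixed sign-patterned matrix. So the two proofs agree on the $\ell_2$ ingredient and the reliance on Theorem \ref{eigenvectorloc}, but diverge on the final step: you propose a spectral-perturbation stability argument (unresolved), while the paper uses an $\ell_1/\ell_2$ comparison via the sample-variance identity, which is more elementary and does not require uniformization over $\{\pm1\}^k$.
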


Even though in the statement $\delta$ is chosen large enough as a function of $\kappa,$ the proof will in fact give a quantitative, albeit technical, bound for all large $\delta$ and small $\kappa$ which can then be simplified into the form of the statement of the theorem by choosing $\delta$ dependent on $\kappa.$

{
Since the maximal clique $K_X$ has  size $h(\delta)$ or $h(\delta)+1$ with probability going to 1 (conditional on $\cU_\delta$), the above theorem shows that the Gaussian values $Z_{ij}$ on $K_X$ are uniformly high in absolute value and close to $\frac{1}{h(\delta)}  \sqrt{2(1 + \delta)\log n}  $ in the $\ell_1$ sense.
}

Our final structural result is an optimal localization statement about the leading eigenvector.

\begin{theorem}[Optimal localization of eigenvector]\label{eigenvectorloc}
Let $\textbf{v} = (v_1,\cdots,v_n)$ be the top eigenvector with $\norm{\textbf{v}}_2 = 1$ and consider the unique maximal clique $K_X$ and its size $k_X$ from Theorem \ref{theorem structure}.  For   $\kappa>0$, define the events
\begin{align*}
\cA_1 := \Big \{\sum_{i\in K_X} v_i^2 \geq 1-\kappa\Big\}
\end{align*}
and
\begin{align*}
\mathcal{A}_2= \Big\{ \frac{1}{k_X}\sum_{i\in K_X} \Big(v_i^2  - \frac{1}{k_X} \Big)^2    \leq     \frac{40\kappa}{k^2_X}  \Big\}.
\end{align*} 
Then, for sufficiently large $\delta>0
$,
\begin{align} \label{1500}
\lim_{n\ri} \mathbb{P}( \cA_1\cap \cA_2 \mid  \,\cU_{\delta} )=1.
\end{align}
\end{theorem}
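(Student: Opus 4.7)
The plan is to derive both $\cA_1$ and $\cA_2$ by a block Rayleigh-quotient decomposition of $\lambda_1=\textbf{v}^T Z\textbf{v}$ along the partition $(K_X,K_X^c)$, using the already-established input from Theorems~\ref{theorem structure} and \ref{uniform gaussian} on the on-clique block, and separately controlling the contribution off $K_X$ by operator-norm estimates that exploit the fact that no clique of size $\geq 4$ exists outside $K_X$.

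\emph{Step 1 (Block decomposition; proof of $\cA_1$).} Write $S:=K_X$, $\alpha:=\|\textbf{v}_S\|_2^2=\sum_{i\in S}v_i^2$, and $c:=\sqrt{2(1+\delta)\log n}$. Splitting $\textbf{v}$ into its restrictions to $S$ and $S^c$ and bounding the cross term by Cauchy--Schwarz yields
\[
\lambda_1\;\le\;\lambda_1(Z|_S)\,\alpha\;+\;2\,\|Z|_{S,S^c}\|_{\mathrm{op}}\sqrt{\alpha(1-\alpha)}\;+\;\lambda_1(Z|_{S^c})(1-\alpha).
\]
I would bound each term as follows. First, by Theorem~\ref{uniform gaussian} (applied with $\kappa'\ll\kappa$), the off-diagonal entries of $Z|_S$ are, in $\ell_1$, approximately equal in absolute value to $c/h(\delta)$, forcing $\lambda_1(Z|_S)\le (k_X-1)\,c/h(\delta)\cdot(1+o_{\kappa'}(1))\le c(1+o_{\kappa'}(1))$. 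Second, since by Theorem~\ref{theorem structure} the largest clique inside $S^c$ has size at most $3$, the results of \cite{van1,ganguly1} (essentially controlling $\lambda_1$ via the largest clique structure) give $\lambda_1(Z|_{S^c})=O(\sqrt{\log n})$ conditionally on $\cU_\delta$. The same kind of estimate also yields $\|Z|_{S,S^c}\|_{\mathrm{op}}=O(\sqrt{\log n})$. Since $\sqrt{\log n}/c=O(1/\sqrt{1+\delta})=o_\delta(1)$, substituting into the display and using $\lambda_1\ge c$ forces $\alpha\ge 1-\kappa$ for $\delta$ large, giving $\cA_1$.

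\emph{Step 2 (Spectral gap on $Z|_S$; proof of $\cA_2$).} Set $\textbf{u}:=\textbf{v}_S/\sqrt{\alpha}$, a unit vector supported on $S$. Step~1 implies $\textbf{u}^T Z|_S\textbf{u}\ge \lambda_1(Z|_S)-o_\delta(c)$, i.e., $\textbf{u}$ is an approximate top eigenvector of $Z|_S$. Theorem~\ref{uniform gaussian} further implies that, after conjugation of $Z|_S$ by a suitable diagonal $\pm 1$ matrix $D$ (chosen so that the entries of $DZ|_SD$ become predominantly positive on a sub-clique $T\subset S$ with $|T|\ge (1-\kappa)h(\delta)$), the matrix $DZ|_SD$ is close in operator norm to the reference matrix $\tfrac{c}{h(\delta)}(J-I)$ supported on $S$. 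The latter has simple top eigenvalue of order $c$ with eigenvector $\tfrac{1}{\sqrt{k_X}}\textbf{1}_S$ and all remaining eigenvalues equal to $-c/h(\delta)$, yielding a spectral gap of order $c$. Davis--Kahan then gives $\|D\textbf{u}-\textbf{1}_S/\sqrt{k_X}\|_2^2=O(\kappa)$, i.e., $u_i^2=1/k_X+e_i$ with $\sum_i e_i^2=O(\kappa/k_X)$. Substituting $v_i^2=\alpha u_i^2$ with $\alpha\in[1-\kappa,1]$ and expanding $\sum_{i\in S}(v_i^2-1/k_X)^2$ produces, after absorbing constants, the bound $\le 40\kappa/k_X$, which upon division by $k_X$ is exactly $\cA_2$.

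\emph{Main obstacle.} The most delicate point is the conditional concentration of $\|Z|_{S,S^c}\|_{\mathrm{op}}$ and $\lambda_1(Z|_{S^c})$ on the polynomially rare event $\cU_\delta$: unconditional estimates cannot be directly invoked. The strategy I would pursue is a two-step decoupling: further condition on the location of $K_X$ and on the entries of $Z$ inside $K_X$. By Theorems~\ref{theorem structure}--\ref{uniform gaussian} this inner conditioning already essentially realises $\cU_\delta$, so the entries of $Z$ on $S^c$ and on the $S$--$S^c$ cross block remain approximately independent and unconditioned Gaussian, and the estimates of \cite{van1} apply after a union bound over the $\binom{n}{k_X}$ choices of $K_X$. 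A secondary technical difficulty is converting the $\ell_1$-type bound of Theorem~\ref{uniform gaussian} into operator-norm proximity of $DZ|_SD$ to $\tfrac{c}{h(\delta)}(J-I)$ used in Step~2, which I expect to handle by further restricting to a sub-clique $T'\subset T$ of size $(1-o(1))h(\delta)$ on which the entrywise deviation is uniformly $o(c/h(\delta))$, absorbing the remaining mass into the error $O(\kappa/k_X)$.
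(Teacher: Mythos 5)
There is a fundamental circularity in your proposal: both Step~1 (bounding $\lambda_1(Z|_S)$) and Step~2 (establishing operator-norm proximity of $Z|_S$ to $\tfrac{c}{h(\delta)}(J-I)$) invoke Theorem~\ref{uniform gaussian} as an input. But in the paper, Theorem~\ref{uniform gaussian} is proved \emph{after} and \emph{via} Theorem~\ref{eigenvectorloc}; the authors remark explicitly at the start of Section~\ref{section 8} that the proof of Theorem~\ref{uniform gaussian} ``relies on Theorem~\ref{eigenvectorloc} which is the reason we proved the latter first.'' So your argument assumes the very statement it is trying to establish (or rather, a later theorem that is downstream of it). Unless you can supply an independent proof of the $\ell_1$-flatness of the Gaussian weights on $K_X$ that does not pass through eigenvector localization, this route does not close.

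Even setting circularity aside, the two obstacles you flag yourself are not adequately resolved. Your decoupling for the conditional bound on $\lambda_1(Z|_{S^c})$ and $\|Z|_{S,S^c}\|_{\mathrm{op}}$ given $\cU_\delta$ is not sound as sketched: conditioning on $K_X=S$ being the \emph{unique maximal} clique is itself a constraint on $Z$ outside $S$ (it forbids larger cliques elsewhere), so the entries on $S^c$ and on the cross block are not ``approximately independent and unconditioned Gaussian'' after that conditioning; moreover, $\cU_\delta$ is not implied by the inner block alone, since $\lambda_1(Z)\ge c$ can receive contributions from outside $S$. Separately, converting the $\ell_1$ bound of Theorem~\ref{uniform gaussian} into an operator-norm bound for Davis--Kahan needs genuine work (Markov on the $\ell_1$ budget only gives a sub-clique with entrywise error $\sqrt{\zeta}c/h(\delta)$ at the cost of shrinking $T$ by a $\sqrt{\zeta}$ fraction; the discarded rows must then be controlled separately). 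The paper avoids all of this by a different mechanism: rather than perturbing a clean reference matrix, it works on the rare event directly and shows that if $\sum_{i\in K}v_i^2<1-\kappa$ (or if $\sum_{i<j\in K}(v_i^2-v_j^2)^2$ is not small), then the Motzkin--Straus reduction of Lemma~\ref{lemma 02} applied to the quadratic form $\sum Z^{(1)}_{ij}v_iv_j$ forces the effective coefficient in front of the Frobenius norm to be strictly below $\sqrt{(\bar k-1)/\bar k}$ (inequalities \eqref{183} and \eqref{187}), which in conjunction with the chi-square tail bound (Lemma~\ref{chi tail}) and the counting estimates on $X^{(1)}$ makes the event $\cU_\delta$ polynomially cheaper than $n^{-\psi(\delta)}$; this contradicts the matching lower bound of Theorem~\ref{main theorem 1}. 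That is a cost-comparison argument, not a spectral-gap perturbation argument, and it needs no input from Theorem~\ref{uniform gaussian} nor any conditional control of the off-clique blocks beyond the events $\cF_0,\cF_1,\cF_2$ already established in Sections~\ref{section 3.2}--\ref{structureproof}.
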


Thus the above theorem says, for any $\kappa>0,$ for all large enough $n,$ conditioned on $\cU_\delta,$  the leading eigenvector distributes at least $1-\kappa$ mass on $K_X$ almost uniformly.

Note that the last two theorems do not claim anything about the sign of the entries of the eigenvector or the Gaussian values. This is since switching the signs of the entries of the largest eigenvector arbitrarily and accordingly changing the signs of the Gaussians yields the same quadratic form.

Having stated our results concerning upper tail deviations, the next result pins down the lower tail large deviation probability.

\begin{theorem}[Lower tail probabilities]\label{main theorem 2}
For any $0<\delta<1$,
\begin{align}
\lim_{n\ri}   \frac{1}{\log n}  \Big( \log \log \frac{1}{ \mathbb{P}(\lambda_1 \leq \sqrt{2(1-\delta) \log n}  )}  \Big) = \delta  .
\end{align}

\end{theorem}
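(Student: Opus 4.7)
The plan is to sandwich $\P(\lambda_1\le t)$, with $t:=\sqrt{2(1-\delta)\log n}$, between two quantities of the form $\exp(-n^{\delta+o(1)})$.

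\textbf{Upper bound on the probability.} Cauchy interlacing applied to the $2\times 2$ principal submatrix supported on each edge $(i,j)$ of $X$ shows $\lambda_1(Z)\ge M:=\max_{i<j,\,X_{ij}=1}|Y_{ij}|$, hence $\P(\lambda_1\le t)\le\P(M\le t)$. By independence of the pairs $(X_{ij},Y_{ij})_{i<j}$,
$$\P(M\le t)=\prod_{i<j}\bigl[(1-p)+p\,\P(|Y|\le t)\bigr]=(1-pq)^{\binom{n}{2}},$$
with $q:=\P(|Y|>t)=n^{-(1-\delta)+o(1)}$ by the standard Gaussian tail. Since $p=d/n$, $\binom{n}{2}pq=\tfrac12 n^{\delta+o(1)}$, so $\P(M\le t)=\exp(-n^{\delta+o(1)})$; this yields $\liminf_n\tfrac{1}{\log n}\log\log\tfrac{1}{\P(\lambda_1\le t)}\ge\delta$.

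\textbf{Lower bound on the probability.} For the matching bound, set $\nu:=\sqrt{\log n/\log\log n}$ and $s:=t-\nu$, and let $E:=\{M\le s\}$. Since $\nu=o(\sqrt{\log n})$, one has $s^2/2-t^2/2=-t\nu+O(\nu^2)=o(\log n)$, so the same product computation gives $\P(E)=\exp(-n^{\delta+o(1)})$. Let $G$ be the structural event that $X$ contains no $K_4$, has maximum degree $\le D=C\log n/\log\log n$, and has no subgraph on $r\le r_0$ vertices with more than $r$ edges. Standard first-moment estimates give $\P(G)=1-o(1)$ for $\cG_{n,d/n}$, and since conditioning on $E$ only makes each edge less likely in a stochastically monotone way, $\P(G\mid E)=1-o(1)$ as well. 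Granting $\lambda_1(Z)\le t$ on $E\cap G$, the theorem follows from $\P(\lambda_1\le t)\ge\P(E\cap G)=(1-o(1))\exp(-n^{\delta+o(1)})$.

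\textbf{Key step and main obstacle.} The core technical step is to establish $\lambda_1(Z)\le s+o(\nu)$ on $E\cap G$, and this is where I would appeal to the Motzkin--Straus $\ell_1$-reduction emphasized in the abstract. Starting from $\lambda_1(Z)\le\lambda_1(|Z|)$ and the identity
$$\lambda_1(|Z|)=2\sup_{y\ge 0,\,\sum_i y_i=1}\sum_{\{i,j\}\in E(X)}|Z_{ij}|\sqrt{y_i y_j},$$
one reduces the spectral problem to an $\ell_1$-optimization that, in view of Motzkin--Straus, is essentially supported on cliques of $X$. On $G$ the only cliques are edges and $O(1)$ triangles: the edge contribution is at most $M\le s$, and after further conditioning on the high-probability event that the Gaussians on the finitely many triangles of $X$ are of typical $O(1)$ magnitude, the triangle contribution is at most $M+o(\sqrt{\log n})$. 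The residual star-like perturbation from the $O(D)$ other edges around the heaviest-edge's endpoints contributes $O(D/M)=O(\sqrt{\log n}/\log\log n)=o(\nu)$, yielding $\lambda_1(Z)\le s+o(\nu)\le t$. The main obstacle is making this clique-plus-perturbation analysis uniformly quantitative: ruling out any conspiracy among short cycles, $K_{2,r}$'s, and near-heavy edges that could inflate $\lambda_1$ above $M$ by more than $\nu$, and confirming that after conditioning on the rare event $E$ the graph $X$ remains structurally tame enough for the Motzkin--Straus argument to be tight.
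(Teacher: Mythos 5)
Your upper bound on $\P(\lambda_1\le t)$ (i.e., the $\liminf\ge\delta$ direction) is correct and in fact a little cleaner than the paper's: the paper also starts from $\lambda_1\ge\max_{i<j}Z_{ij}$ but routes through Lemma~\ref{lemma non zero} (lower bound on $|E(X)|$) and Lemma~\ref{max gaussian}, whereas you factor over the i.i.d.\ pairs $(X_{ij},Y_{ij})$ directly.

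The lower bound on $\P(\lambda_1\le t)$ is where there is a genuine gap. You condition on $E=\{M\le s\}$ and a purely graph-theoretic event $G$ (no $K_4$, bounded degree, locally sparse), and then ``grant'' the deterministic implication $\lambda_1(Z)\le t$ on $E\cap G$. That implication is false. The event $G$ places no constraint on the Gaussian values beyond $M\le s$, and a single vertex $v$ of degree $D\asymp\log n/\log\log n$ with all $D$ incident weights of size $\asymp s/2$ (fully compatible with $E\cap G$) already produces a weighted star whose largest eigenvalue is $\asymp (s/2)\sqrt{D}\asymp\log n/\sqrt{\log\log n}\gg t$. Your perturbation estimate $O(D/M)$ tacitly assumes the off-heavy-edge Gaussians are $O(1)$, but $E$ caps them at $s\asymp\sqrt{\log n}$, not at $O(1)$; conditioning on a further event that all star-weights are $O(1)$ would not have conditional probability $1-o(1)$. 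Such bad star configurations do have conditional probability $o(1)$ given $E$ (roughly $\exp(-\Theta(\log^2 n/\log\log n))$), but making ``$\lambda_1\le t$ with conditional probability $1-o(1)$ given $E\cap G$'' precise is exactly the content one still needs to prove, and the Motzkin--Straus heuristic you outline does not by itself control these star/chi-square contributions. The paper takes a different route precisely to sidestep this: it splits $Z=Z^{(1)}+Z^{(2)}$ (large vs.\ small Gaussians), deterministically bounds $\lambda_1(Z^{(2)})$ via $\lambda_1(X)$ on the event $\cS_\delta$, conditions on the graph structure of $X^{(1)}$ (events $\cD,\cC,\cE,{\sf Few\mbox{-}cycles}$), and then bounds, \emph{per connected component}, the tail $\P(\lambda_1(Z^{(1)}_i)\ge\sqrt{2(1-\delta'')\log n}\mid X^{(1)})$ using Proposition~\ref{prop}, whose proof reduces to a chi-square/Frobenius-norm tail via Cauchy--Schwarz together with Motzkin--Straus. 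This per-component probabilistic bound, rather than a deterministic inequality, is what correctly handles high-degree vertices carrying many moderate Gaussians; independence of components given $X^{(1)}$ then produces the product $(1-n^{-(1-\delta'')+o(1)})^n\approx\exp(-n^{\delta+o(1)})$.
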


As an immediate corollary of Theorems \ref{main theorem 1} and \ref{main theorem 2}, one obtains the following \emph{`law of large numbers'} behavior which we were  surprised to not be able to locate in the literature.
\begin{corollary}
We have
\begin{align*}
\lim_{n\ri}  \frac{\lambda_1}{\sqrt{\log n}} = \sqrt{2}
\end{align*}
in probability.
\end{corollary}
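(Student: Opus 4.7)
The plan is to directly combine the upper and lower tail estimates from Theorems \ref{main theorem 1} and \ref{main theorem 2} with the observation that both rate functions are strictly positive on the relevant ranges. Fix $\epsilon>0$; it suffices to show that both $\P(\lambda_1 > (\sqrt{2}+\epsilon)\sqrt{\log n})$ and $\P(\lambda_1 < (\sqrt{2}-\epsilon)\sqrt{\log n})$ tend to $0$.

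For the first probability, choose $\delta>0$ so that $\sqrt{2(1+\delta)} = \sqrt{2}+\epsilon$. By Theorem \ref{main theorem 1},
\begin{align*}
\P(\lambda_1 \geq \sqrt{2(1+\delta)\log n}) = n^{-\psi(\delta)+o(1)}.
\end{align*}
The plan is then to verify $\psi(\delta)>0$. Evaluating \eqref{keydef} at $k=2$ gives $\phi_\delta(2)=\delta$, while for $k\ge 3$ the first term $k(k-3)/2$ is nonnegative and the second term is at least $3(1+\delta)/4$. Thus $\psi(\delta) = \min_{k\ge 2}\phi_\delta(k)>0$, so the upper tail probability vanishes as $n\to\infty$.

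For the second probability, I would choose (for $\epsilon$ small enough that $\sqrt{2}-\epsilon>0$) $\delta\in(0,1)$ with $\sqrt{2(1-\delta)}=\sqrt{2}-\epsilon$. Theorem \ref{main theorem 2} gives
\begin{align*}
\frac{1}{\log n}\log\log \frac{1}{\P(\lambda_1 \le \sqrt{2(1-\delta)\log n})} \longrightarrow \delta >0,
\end{align*}
so $\P(\lambda_1 \le \sqrt{2(1-\delta)\log n}) \le \exp(-n^{\delta/2})$ for all $n$ large, which decays to $0$ (in fact super-polynomially). For $\epsilon$ large (with $\sqrt{2}-\epsilon\le 0$), the event $\{\lambda_1<(\sqrt{2}-\epsilon)\sqrt{\log n}\}$ is contained in $\{\lambda_1<0\}$, which has probability at most $\P(\lambda_1\le \sqrt{2(1-\delta')\log n})$ for any $\delta'\in(0,1)$, so the same bound applies.

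There is no real obstacle here: the work is entirely encapsulated in the two main theorems, and the corollary is a one-line consequence of the positivity of $\psi(\delta)$ for $\delta>0$ and the positivity of the lower-tail exponent. The only minor care needed is to translate between the normalization $\sqrt{2(1\pm\delta)\log n}$ used in the theorems and the normalization $(\sqrt 2\pm \epsilon)\sqrt{\log n}$ in the corollary, which is just the algebraic substitution indicated above.
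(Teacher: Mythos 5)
Your approach is exactly the one the paper intends: the corollary is stated as an immediate consequence of Theorems \ref{main theorem 1} and \ref{main theorem 2}, and the paper gives no separate proof, so combining the upper-tail and lower-tail estimates with the positivity of the respective rate functions is the right (and essentially the only) route. Your treatment of the normalization change and of the degenerate case $\sqrt{2}-\epsilon\le 0$ is fine (though the latter is unnecessary: once the claim holds for all sufficiently small $\epsilon>0$ it holds for all $\epsilon>0$).

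One small slip in the verification that $\psi(\delta)>0$: you assert that for $k\geq 3$ the second term $\frac{1+\delta}{2}\cdot\frac{k}{k-1}$ is at least $\frac{3(1+\delta)}{4}$. That is false for $k\geq 4$, since $k/(k-1)$ is strictly decreasing (e.g.\ for $k=4$ the second term is $\frac{2(1+\delta)}{3}<\frac{3(1+\delta)}{4}$). The conclusion is still correct, but the correct quick argument is: $\phi_\delta(2)=\delta>0$, $\phi_\delta(3)=\frac{3(1+\delta)}{4}>0$, and for $k\geq 4$ the first term alone gives $\phi_\delta(k)\geq \frac{k(k-3)}{2}\geq 2>0$; since $\phi_\delta(k)\to\infty$ as $k\to\infty$, the minimum over $k\geq 2$ is attained and is strictly positive. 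With that patch, the proof is complete.
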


We conclude this discussion by remarking that
although in principle our techniques may be used to analyze a wider subset of the parameter space, we have, for concreteness and aesthetic considerations, chosen to simply focus on the case of constant average degree.

\subsection{Organization of the article}
In Section \ref{s:iop} we provide a detailed account of the keys ideas driving the proofs. In Section \ref{swg}, we state and prove the key Proposition \ref{spectral bound} obtaining a bound on the spectral norm in terms of the Frobenius norm for weighted graphs.
The rest of the paper focuses on the proofs of  Theorem \ref{main theorem 1} (Sections \ref{section 3}, \ref{section 3.2}), Theorem \ref{theorem structure} in Section \ref{structureproof},  Theorem \ref{eigenvectorloc} in  Section \ref{section 7}, Theorem \ref{uniform gaussian} in Section  
\ref{section 8} and Theorem \ref{main theorem 2} in Section \ref{section 4} respectively. Certain straightforward but technical estimates are proved in the appendix.

\subsection{Acknowledgement} The authors thank Noga Alon for pointing out the classical reference \cite{turan}. 
SG is partially supported by NSF grant DMS-1855688, NSF CAREER Award DMS-1945172 and a Sloan Research Fellowship. KN is supported by UCLA Mathematics department. This work was initiated when SG was participating in the  Probability, Geometry, and Computation in High Dimensions program at the Simons Institute in Fall 2020. 

\section{Key ideas of the proofs}\label{s:iop}
In this section we provide a sketch of the arguments in the proofs of our main results. \\

\noindent
\textit{Upper tail lower bound:} This is straightforward.
The strategy is to plant a clique of an appropriate size $(\argmax_{k} \phi_{\delta}(k))$ and have high valued Gaussians on all the clique edges, i.e., at least ${\frac{\sqrt{2(1+\delta)\log(n)}}{k-1}}$.
The probability of a clique of size $k\ge 3$ appearing is up to constants $n^{k-{k\choose 2}}$ (the proof follows by a second moment argument) while the probability of having high Gaussians is
\begin{align*}
\mathbb{P}\Big({Y_{ij}} \geq  {\frac{\sqrt{2(1+\delta)\log(n)}}{k-1}},\ \forall \,\,1\leq i<j\leq k \Big)  
&\geq {\Big( \frac{C}{\sqrt{\log n}} n^{-\frac{1+\delta }{(k-1)^2}}\Big)^{{k \choose 2}}},
\end{align*}
where the right hand side follows from standard Gaussian tail bounds (see \eqref{tail} later). 
Thus the total cost at the polynomial scale is $n^{k-{{k\choose 2}}}n^{-\frac{1+\delta }{(k-1)^2}{{k \choose 2}}}.$ 
Observe that the exponent is precisely $-\phi_{\delta}(k).$
When $k=2$, one should view it slightly differently however, since $k-{k \choose 2} =1>0$. Namely, there are order $n^{{k-{{k\choose 2}}}}=n$ many edges and hence the probability that there exists a Gaussian of value at $\sqrt{2(1+\delta)\log n}$ is $nn^{-(1+\delta)}=n^{-\delta}=n^{-\phi_\delta(2)}.$
Finally, optimizing over $k$ yields the bound $n^{-\psi(\delta)}.$

It is worth noticing the contrasting behavior in the absence of the Gaussian variables, where in \cite{ganguly1} it was shown that large deviations for the largest eigenvalue is guided by the large deviations for the maximum degree and not by appearance of a clique.\\

\noindent
\textit{Upper tail upper bound:} This is the most difficult among the four bounds and a significant part of the work goes into proving this.
The first step is to make the underlying graph sparser by only focusing on the Gaussians with a large enough value. 
As will be apparent soon, the reason for this is two-fold.
a) It is much harder for the graph restricted to small Gaussian values to have a high spectral norm, and so for our purposes we will treat that component as \emph{spectrally negligible}, b) The graph restricted to high Gaussian values is much sparser and hence admits greater shattering into smaller components whose sizes we can control; since eigenvalues of different components do not interact with each other, this will be particularly convenient. 

Proceeding to implement this strategy,
decompose the Gaussian random variables $Y_{ij}$ as
\begin{align*} 
Y_{ij} = Y^{(1)}_{ij} + Y^{(2)}_{ij},
\end{align*}
where $Y^{(1)}_{ij}=  Y_{ij}\1_{|Y_{ij}| > \sqrt{\varepsilon \log \log n}}$ and similarly $Y^{(2)}_{ij}=  Y_{ij}\1_{|Y_{ij}| \leq \sqrt{\varepsilon \log \log n}}.$
Thus, we can write the matrix $Z$ as $Z^{(1)}+Z^{(2)}$ with 
\begin{align}\label{decomposition1}
Z^{(1)}_{ij} = X_{ij} Y^{(1)}_{ij}, \quad  Z^{(2)}_{ij} = X_{ij} Y^{(2)}_{ij},
\end{align} 
and similarly $ X=X^{(1)}+X^{(2)}$ i.e., $X^{(1)}_{ij}=X_{ij}\1_{|Y_{ij}| > \sqrt{\varepsilon \log \log n}}.$
We next prove an upper bound on the probability that $Z^{(2)}$ has high spectral norm which is much smaller than that for $Z$ which implies that the spectral behavior of $Z$ even under large deviations is dictated by that of $Z^{(1)}.$
The choice of the truncation threshold is governed by the fact that the typical spectral norm of $\cG_{n,\frac{d}{n}}$ is of order $\sqrt{\frac{\log n}{\log \log n}}$ which in itself is a consequence of the fact that the maximum degree is of order ${\frac{\log n}{\log \log n}}$. Sharp large deviations behavior for eigenvalues of sparse random graphs was recently established in the already mentioned work \cite{ganguly1} which we use to make this step precise.

This allows one to focus simply on $Z^{(1)}$ or the underlying graph $X^{(1)},$ conditioning on which makes the spectral behavior of the individual connected components independent guided by the Gaussian variables each of which are conditioned to be at least $\sqrt{\e} \sqrt {\log\log n}.$

Let $C_1,\cdots,C_k$ be its connected components.
At this point denoting the network $Z$ restricted to $C_{\ell}$ by $Z_{\ell}$, we relate $\|Z_{\ell}\|_{\rm op}$ to its Frobenius norm $\|Z_{\ell}\|_{F}$. 
The trivial bound $\|Z_{\ell}\|_{\rm op}\le \|Z_{\ell}\|_{F}$ is easy to see.  
The next idea which is the key one in this paper relies on the following sharp improvement over the above. Namely we show that if $k_{\ell}$ is the size of the maximal clique in $Z_{\ell},$ then  
\begin{equation}\label{keyineq}
\|Z_{\ell}\|^2_{\rm op}\le \frac{k_{\ell}-1}{k_{\ell}}\|Z_{\ell}\|^2_{F}.
\end{equation}
The proof of the above relies on reducing the standard $\ell_2$ variational problem for the spectral norm to an $\ell_1$ version which can be solved by `mass transportation' techniques.  And the above leads us to a bound of the form
\begin{equation}\label{crucin1234}
\P(\|Z_{\ell}\|^2_{\rm op} \ge 2(1+\delta)\log n)\le \P\Big(\|Z_{\ell}\|^2_{F} \ge \frac{k_{\ell}}{k_{\ell}-1}2(1+\delta)\log n\Big).
\end{equation}

Now quenching the graph $X,$ the random variable $\|Z_{\ell}\|^2_{F}$ can be viewed at first glance as a chi-squared random variable with degrees of freedom given by the component size $|E(C_{\ell})|$.
Now as long as $|E(C_{\ell})|$ is $o(\log n),$ the degree of freedom does not affect the latter probability in its leading order behavior and it behaves as the square of a single Gaussian. This is what justifies the sparsification step mentioned at the outset which ensures that $|C_{\ell}|=O_{\e}(\frac{\log n}{\log\log n})$ which along with the tree like behavior of $C_{\ell}$ implies $|E(C_{\ell})|=O_{\e}(\frac{\log n}{\log\log n})$ as well (Here $O_\e(\cdot)$ is the standard notation denoting  that the implicit constant is a function of $\e.$) 

However there is one crucial subtlety that we have overlooked so far. Namely, $\|Z_{\ell}\|^2_{F}$ is not simply a chi-squared random variable but instead is a sum of squares of independent Gaussian variables each conditioned to {have an absolute value} at least $\sqrt{\e \log\log n}.$ 
This makes the tail heavier by the exact amount which on interacting with the $\e$ dependence in the size of $C_{\ell}$ begins to affect the leading order probability. Thus unfortunately the  above strategy ends up not quite working.

To address this we further rely on the fact that $C_{\ell}$ is almost tree-like and has a bounded number of `tree-excess edges' with high probability and revise our strategy in the following way. Consider the eigenvector $v$ corresponding to the largest eigenvalue $\lambda(\ell):=\lambda_1(C_{\ell}).$  Thus we know $v^{\top}Z_{\ell}v=\lambda(\ell).$

The key idea now is to split the vertices of $C_{\ell},$ according to high and low values of $v.$ We first show that it is much more costly for the Frobenius norm to be high on the subgraph induced by the low values of $v.$ This is where the tree like property is crucially used as well.

Thus we focus only on the $O(1)$ vertices supporting high $v$ values and since the maximum degree is $O(\frac{\log n}{\log\log n})$ (\emph{without} an $\e$ dependence in the constant), the strategy originally outlined can be made to work for the subgraph induced by these vertices.\\

While the next three proofs are rather technically involved, here we simply review the high level strategies involved. \\

\noindent
\textit{Emergence of a unique maximal clique:} The above proofs imply that the graph $X^{(1)}$ under $\cU_\delta$ contains a clique $K_{X^{(1}}$ whose size is sharply concentrated on  $\cM(\delta)$ (where the latter appearing in the statement of Theorem \ref{theorem structure} denotes the set of minimizers of $\phi_{\delta}(\cdot)$). It also follows that $K_{X^{(1)}}$ is unique. We then show that on account of sparsity, superimposing $X^{(2)}$ on $X^{(1)}$ does not alter this. Particularly convenient is the fact that conditional on $X^{(1)},$ the spectral behavior of $Z^{(1)}$ and the random graph $X^{(2)}$ are independent. However making this precise is delicate and is one of the most technical parts of the paper, relying on a rather refined understanding of the graph $X^{(1)}$ under the large deviation behavior of $\lambda(Z^{(1)})$ Such understanding also allows us to show that there does not exist any other clique in $X$ of size at least $4$ which is not contained in $K_X$.\\

\noindent
\textit{Localization of the leading eigenvector:}
The proof of this is reliant on the fact that \eqref{keyineq} is sharp only when the leading eigenvector is supported on the maximal clique $K_X$. We prove a quantitative version of this fact showing that significant mass away from the clique results in a deteriorated form of \eqref{keyineq} which then makes $\cU_{\delta}$ much more costly than the already proven lower bound for its probability. Further a similar approach is used to prove the desired flatness of the vector on $K_X.$\\

\noindent
\textit{Flatness of the Gaussian values on the maximal clique.}
Using the previous structural result about the leading eigenvector $v=(v_1,v_2,\ldots, v_n)$, we consider the set $T\subset K_X$ such that ${|v_i|}\approx \frac{1}{k_X}$ for all $i\in T$ (we don't make the meaning of $\approx$ precise) The previous results guarantee that, conditional on $\cU_\delta,$ 
$|T|\ge (1-\kappa) k_X$ and $|k_X-h(\delta)|\le 1.$ 
Firstly showing that the spectral contribution from the edges incident on $T^c$ is negligible, it follows that the quadratic form $v^{\top}Zv \approx v_T^{\top}Z_Tv_T$ where $v_T$ and $Z_T$ are the restrictions to the subgraph induced on $T.$ Now owing to the flatness of $v$ on $T$ (and this is why we work with $T$ and not $K_X$), it follows that $$v_T^{\top}Z_Tv_T\le 2\frac{(1+o_\delta(1))}{h(\delta)}\|Z_T\|_1,$$
{ where the $\ell_p$ norm $Z_T$ is defined by
\begin{align*}
\norm{Z_T}_p:=\Big  ( \sum_{i<j, i,j\in T} |Z_{ij}|^p\Big  )^{1/p}.
\end{align*}
}
Using this and the fact that $|k_X-h(\delta)|\le 1$ with high probability, we obtain the bound 
$$\|Z_T\|_1\approx  \frac{1}{2}  h(\delta)\sqrt{2(1 + \delta')\log n}.$$ In fact the above argument only implies a lower bound, while the upper bound follows from the following sharp bound on the $\ell_2$ norm which is a consequence of previous arguments (e.g. \eqref{crucin1234}). $$\|Z_T\|^2_2 \approx  (1+o_\delta(1)) (1+\delta)  \log n   .$$

Using the above two bounds, one can conclude the statement of the theorem in a straightforward fashion.\\

\noindent
\textit{Lower tail:} The upper bound can be obtained simply by a comparison with the maximum of $O(n)$ many independent Gaussians.

For the lower bound, $Z^{(2)}$ can still be considered spectrally negligible, while for $Z^{(1)},$ conditioning on $X^{(1)}$ being `nice', with none of the components being too large while also having at most bounded tree excess we use the results about the upper tail to upper bound the probability that for any connected component $C_{\ell}$, $\lambda(\ell)\ge \sqrt{2(1-\delta)\log n}$ or in other words lower bound $\P(\lambda(\ell)\le \sqrt{2(1-\delta)\log n})$ where $\lambda(\ell):=\lambda_1(C_\ell)$. Since $\lambda_1(Z)= \max_{\ell} (\lambda(\ell))$ and, conditioning on the graph makes $\lambda(\ell)$ across  different values of $\ell$ independent, the result follows in a straightforward fashion.

\section{Spectral theory of weighted graphs}\label{swg}
As outlined in Section \ref{s:iop}, a key ingredient in our proofs is a new deterministic bound on the spectral norm in terms of the Frobenius norm by an $\ell_2\to \ell_1$ reduction. Though this is independently interesting, the proofs are somewhat technical and the reader only interested in the large deviations aspect, at first read can simply treat this result as an input in the proof of Theorems \ref{main theorem 1}.
\subsection{Spectral norm and Frobenius norm}
{  For a Hermitian matrix $A$ of size $n\times n$,
let $\lambda_1\geq \cdots \geq \lambda_n$ be the eigenvalues in a non-increasing order.}
Then,  we have
\begin{align*}
\text{tr}(A^k)  = \lambda_1^k + \cdots + \lambda_n^k,
\end{align*}
which immediately implies that for any even positive integer $k$,
\begin{align}
\lambda_1^k \leq  \text{tr}(A^k) \leq n \lambda_1^k.
\end{align}
We denote by $\norm{A}_F$, the Frobenius norm of the matrix $A$:
\begin{align*}
\norm{A}_F :=  (\text{tr}(A^2)   )^{1/2}=  \Big(\sum_{1\leq i,j\leq n} a_{ij}^2\Big)^{1/2},
\end{align*}
Then, taking $k=2$ above, we record the following trivial  bound 
\begin{align} \label{fro}
\lambda_1^2 \leq \norm{A}_F^2.
\end{align}

\subsection{Refined bound on spectral norms for weighted graphs}

We now move on to a sharp bound on the spectral norm in terms of the Frobenius bound for networks improving the above.

Before stating the result let us discuss a situation where one already obtains an improvement over \eqref{fro}, namely for bipartite graphs. This is because of the underlying symmetry in the spectrum, as a consequence of which we get $\lambda_1=-\lambda_n$ and hence \begin{align*}
\lambda_1(A)^2 \leq \frac{1}{2}\norm{A}_F^2.
\end{align*}
 The main result of this section is a new and sharp generalization of this inequality.

\begin{proposition} \label{spectral bound}
Let $k$ be the maximal size of clique contained in $G$. Then, for any conductance $a:E\rightarrow \R$, we have
\begin{align}
\lambda_1(A)^2 \leq  \frac{k-1}{k} \norm{A}_F^2
\end{align}
\end{proposition}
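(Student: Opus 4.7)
The plan is to realize $\lambda_1(A)$ through its variational characterization and then turn the combinatorial constraint coming from the clique number into an analytic inequality via the Motzkin-Straus theorem, exactly as the abstract and Section \ref{s:iop} advertise. In one sentence: bound $\lambda_1(A)^2$ by Cauchy-Schwarz to separate the weights from a quadratic form in $v_i^2$, and then bound that quadratic form on the simplex using Motzkin-Straus.

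Concretely, let $v$ be a unit eigenvector with $\lambda_1(A) = v^\top A v$ (note $A$ has zero diagonal, so $\operatorname{tr}(A) = 0$ forces $\lambda_1(A) \ge 0$). Since $a_{ij}=0$ whenever $\{i,j\}\notin E(G)$, expanding $v^\top A v$ over unordered edges and applying the triangle inequality gives
\begin{equation*}
\lambda_1(A) \;\le\; 2\sum_{\{i,j\}\in E} |a_{ij}|\,|v_i|\,|v_j|.
\end{equation*}
Applying Cauchy-Schwarz to the right-hand side with $b_{ij}=|a_{ij}|$ and $c_{ij}=|v_i||v_j|$, and using $\sum_{\{i,j\}\in E} a_{ij}^2 = \tfrac{1}{2}\|A\|_F^2$, produces
\begin{equation*}
\lambda_1(A)^2 \;\le\; 4\Big(\sum_{\{i,j\}\in E} a_{ij}^2\Big)\Big(\sum_{\{i,j\}\in E} v_i^2 v_j^2\Big) \;=\; 2\,\|A\|_F^2 \cdot \sum_{\{i,j\}\in E} v_i^2 v_j^2.
\end{equation*}
This is the promised $\ell_2\to\ell_1$ reduction: the remaining factor is a quadratic form evaluated at the probability vector $x_i := v_i^2$.

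Now I invoke Motzkin-Straus, which asserts that for $G$ with clique number $k$ one has
\begin{equation*}
\max_{x\in\Delta_n}\;\sum_{\{i,j\}\in E(G)} x_i x_j \;=\; \frac{1}{2}\Big(1 - \frac{1}{k}\Big).
\end{equation*}
Applied to $x_i = v_i^2$, this gives $\sum_{\{i,j\}\in E} v_i^2 v_j^2 \le (k-1)/(2k)$, and substituting back yields $\lambda_1(A)^2 \le \frac{k-1}{k}\|A\|_F^2$, as required.

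The only nontrivial ingredient is the Motzkin-Straus bound itself, which the paper already cites (\cite{turan}); once that is taken as a black box, the remaining manipulations are one line each. If one prefers a self-contained argument, Motzkin-Straus admits a short mass-transport proof: at any optimizer $x$ on the simplex, if two vertices $i,j$ in the support are non-adjacent, transferring all of $x_j$'s mass to $x_i$ (or vice-versa, according to which partial derivative $\sum_{\ell\sim i} x_\ell$ is larger) does not decrease the objective, so an optimizer can be chosen supported on a clique, on which the objective becomes the explicit $\tfrac{1}{2}(1-1/m)$ for $m\le k$. A sanity check confirms sharpness: taking $A$ to be the $0/1$ adjacency matrix of $K_k$ embedded in $\{1,\dots,n\}$ gives $\lambda_1(A)^2 = (k-1)^2$ and $\frac{k-1}{k}\|A\|_F^2 = \frac{k-1}{k}\cdot k(k-1) = (k-1)^2$, with the Cauchy-Schwarz step tight because the uniform eigenvector on the clique makes both $|v_iv_j|$ and $a_{ij}$ constant on edges.
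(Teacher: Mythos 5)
Your proposal is correct and follows essentially the same route as the paper: both pass from the variational (or eigenvector) characterization of $\lambda_1$ to the quadratic form $\sum_{i\sim j} v_i^2 v_j^2$ via Cauchy--Schwarz, and then invoke the Motzkin--Straus bound on the simplex to conclude, with the same mass-transport argument available for a self-contained proof of Motzkin--Straus. The only differences are cosmetic (you work directly with the top eigenvector rather than the supremum, and you track unordered-edge sums, halving the Frobenius norm, whereas the paper works with ordered pairs), so there is nothing substantive to compare.
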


\begin{remark} \label{clique}
For $G$ a clique  of size $k$, with adjacency matrix $A$, it is straightforward to see that 
\begin{align} \label{220}
\lambda_1(A)^2  =  \frac{k-1}{k} \norm{A}_F^2.
\end{align}
This follows from the fact that a $k\times k$ matrix  whose off-diagonal entries are 1 and on-diagonal entries are 0 has the largest eigenvalue $k-1$ and the Frobenius norm  $ \sqrt{k^2-k}$.
\end{remark}

The proof of the proposition will rely crucially on the following bound which goes back to the seminal work of Motzkin and Straus \cite{turan} whose proof we include for completeness. 

\begin{lemma} \label{lemma 02}
Suppose that $k$ is the maximal size of clique contained in the graph $G$ with  vertex set $[n]$. Let   $f=(f_1,\cdots,f_n)$ be a vector with $\sum_{i=1}^n  f_i=s$ and $f_i\geq 0$.  Then,
\begin{align} \label{020}
\sum_{i<j, i\sim j} f_if_j  \leq \frac{k-1}{2k}s^2.
\end{align}
\end{lemma}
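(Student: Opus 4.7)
The plan is to prove the Motzkin–Straus bound by a standard compactness and ``mass transfer'' argument. By homogeneity (both sides scale as $s^2$), I may rescale and assume $s=1$, so $f$ lies in the simplex $\Delta_n := \{g \in \R^n_{\geq 0} : \sum_i g_i = 1\}$. Set
\[
Q(g) := \sum_{i < j,\, i \sim j} g_i g_j,
\]
and let $f^\star$ be a maximizer of $Q$ on $\Delta_n$, which exists by compactness and continuity.

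The key step is to show that there is a maximizer whose support $S := \{i : f^\star_i > 0\}$ induces a clique in $G$. Suppose not: pick $u, v \in S$ with $u \not\sim v$. For fixed values of $f^\star_w$ with $w \notin \{u,v\}$, the function $Q$ restricted to the one-parameter family $\{g : g_u + g_v = f^\star_u + f^\star_v,\ g_w = f^\star_w \text{ for } w \ne u,v\}$ is \emph{linear} in $g_u$, because the missing edge $\{u,v\}$ means no $g_u g_v$ term appears. Hence one of the endpoints $g_u = f^\star_u + f^\star_v,\ g_v = 0$ or $g_u = 0,\ g_v = f^\star_u + f^\star_v$ also achieves the maximum. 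Replacing $f^\star$ by this new maximizer strictly reduces $|S|$. Iterating finitely often produces a maximizer supported on a clique $C$ of some size $m \leq k$.

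On such a clique, all off-diagonal pairs are edges, so
\[
Q(f^\star) = \sum_{\substack{i<j\\ i,j\in C}} f^\star_i f^\star_j = \tfrac{1}{2}\Bigl(\bigl(\textstyle\sum_{i\in C} f^\star_i\bigr)^2 - \sum_{i\in C} (f^\star_i)^2\Bigr) = \tfrac{1}{2}\Bigl(1 - \sum_{i\in C} (f^\star_i)^2\Bigr).
\]
By Cauchy–Schwarz, $\sum_{i \in C} (f^\star_i)^2 \geq \frac{1}{m}(\sum_{i \in C} f^\star_i)^2 = \frac{1}{m}$, so $Q(f^\star) \leq \frac{1}{2}(1 - \frac{1}{m}) = \frac{m-1}{2m}$. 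Since $x \mapsto \frac{x-1}{2x} = \frac{1}{2} - \frac{1}{2x}$ is increasing and $m \leq k$, we conclude $Q(f^\star) \leq \frac{k-1}{2k}$. Rescaling by $s$ yields \eqref{020}.

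The only delicate point is the support-reduction argument; the main thing to verify is that the linearity of $Q$ in the direction $e_u - e_v$ (which relies on $u \not\sim v$) allows pushing all mass onto one endpoint while staying within $\Delta_n$ and not decreasing $Q$. Everything else is a direct computation, so I do not anticipate any serious obstacle.
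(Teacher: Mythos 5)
Your proof is correct and follows essentially the same Motzkin--Straus mass-transfer argument as the paper; the only cosmetic difference is that you first pass to a maximizer via compactness and use linearity-at-an-interior-maximum, whereas the paper transports mass directly on the given $f$ (choosing the direction by comparing $\sum_{i\sim v_1}f_i$ with $\sum_{j\sim v_2}f_j$), with the clique case and Cauchy--Schwarz step identical.
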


We first furnish the proof of the proposition before proving the above lemma.
\begin{proof}[Proof of Proposition \ref{spectral bound}]
By the variational characterization of the largest eigenvalue,
\begin{align*}
\lambda_1(A) = \sup_{ \norm{f}_2  =1} \sum_{i\sim j}  a_{ij} f_if_j.
\end{align*}
Thus, {for any conductance $a:E\rightarrow \R$,}
\begin{align*}
  \frac{\lambda_1(A)}{\norm{A}_F} &=  \sup_{ \norm{f}_2  =1}  \frac{ \sum_{i\sim j}  a_{ij} f_if_j }{\norm{A}_F} \\
  &\leq    \sup_{ \norm{f}_2  =1}   \frac{  (\sum_{i\sim j} a_{ij}^2)^{1/2} (\sum_{i\sim j} f_i^2f_j^2)^{1/2}}{\norm{A}_F}  \\
  & =  \sup_{ \norm{f}_2  =1}  \Big (\sum_{i\sim j} f_i^2f_j^2\Big)^{1/2} = \sup_{ \norm{w}_1  =1, w_i\geq 0}  \Big (\sum_{i\sim j} w_iw_j\Big)^{1/2},
\end{align*}
where the second line follows by Cauchy-Schwarz inequality and the final equality witnesses the $\ell_2\to \ell_1$ reduction.
 By Lemma \ref{lemma 02}, we have
 \begin{align*}
  \sup_{ \norm{w}_1  =1, w_i\geq 0}  \Big (\sum_{i\sim j} w_iw_j\Big)^{1/2}  \leq \Big(\frac{k-1}{k}\Big)^{1/2},
 \end{align*}
which finishes the proof.
\end{proof}

 We now provide the proof of Lemma \ref{lemma 02}.

\begin{proof}[Proof of Lemma \ref{lemma 02}]
The proof is based on a `mass transportation' argument. 
By homogeneity, it suffices to assume $s=1.$
We first verify \eqref{020} when $G$ is itself a clique of size $m$. In other words, we claim that  if $\sum_{i=1}^m  f_i=1$ and $f_i\geq 0$, then
\begin{align}\label{genbound}
\sum_{1\leq i<j\leq m} f_if_j  \leq \frac{m-1}{2m}.
\end{align}
This follows from the simple equation $2\sum_{i<j} f_if_j=(\sum_i f_i )^2-\sum f_i^2$ and that $\sum_i f_i^2 \ge \frac{1}{m}$ (by Cauchy-Schwarz inequality).

We now prove  \eqref{020} for  the general graphs $G$. 
Assuming that $G$ is not a clique of size $k$, one can  choose two vertices $v_1$ and $v_2$ such that $v_1\not\sim v_2$.   Without loss of generality,  we assume $\sum_{i\sim v_1}  f_i \geq \sum_{j\sim v_2}  f_j$. This allows us to transport mass from $v_2$ to $v_1$ without decreasing the objective function. Namely,
since
\begin{align*}
\sum f_if_j =\Big ( \sum_{i \sim v_1} f_i \Big  ) f_{v_1} + \Big ( \sum_{j \sim v_2} f_j  \Big  ) f_{v_2}  + \sum_{ i,j\neq v_1,v_2, i\sim j } f_if_j
\end{align*}
is linear in $f_{v_1}$ and $f_{v_2}$,
$f$ does not decrease when $f=(\cdots, f_{v_1},\cdots, f_{v_2},\cdots,)$ is replaced by $ f^{(1)}=( \cdots, f_{v_1}+f_{v_2}, \cdots, 0,\cdots ) $. After removing the zero at $v_2$, we obtain a new vector $\tilde{f}  ^{(1)}$ on the new graph $G_1$ obtained by deletion of the vertex $v_2$ and the edges incident on it. 

We repeat this procedure to get a series of vectors $  \tilde{f}  ^{(1)},\cdots,\tilde{f}  ^{(\ell)}$ and graphs $G_1,\cdots,G_\ell$ such that $G_{i+1}$ is  obtained by deletion of some vertex $w_{i+1}$ and edges incident on $w_{i+1}$ in the graph $G_i$.    This procedure is finished once every pair of vertices in $G_\ell$  are connected, i.e. $G_\ell$ is a clique of size $m\leq k$. This along with \eqref{genbound} finishes the proof.
\end{proof}

We end this section with a related short technical lemma which we will need later. The reader can choose to ignore this for the moment and only come back to it when it is later used. 

\begin{lemma} \label{lemma 03}
Suppose that $G$ is a tree with a vertex set $[n]$ and  $s,\eta$ are positive numbers.  Let $v = (v_1,\cdots,v_n)$ be a vector with $\sum_i v_i = s$ and $0\leq v_i\leq \eta$. Then,
\begin{align} \label{030}
\sum_{i<j, i\sim j} v_iv_j \leq 
\begin{cases}
\frac{1}{4}s^2  & s< 2\eta, \\
\eta  (s-\eta)  &s\geq 2\eta.
\end{cases}
\end{align}
\end{lemma}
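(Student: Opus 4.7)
The plan is to handle the two cases separately, as the first follows essentially for free from Lemma \ref{lemma 02} while the second requires a local optimization argument that genuinely uses the upper bound $v_i \leq \eta$.

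\textbf{Case $s < 2\eta$.} Since $G$ is a tree, its maximum clique size is at most $2$. Applying Lemma \ref{lemma 02} with $k = 2$ immediately yields $\sum_{i<j,\, i \sim j} v_i v_j \leq \tfrac{1}{4} s^2$; notably the upper constraint $v_i \leq \eta$ is not even needed.

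\textbf{Case $s \geq 2\eta$.} Here the sharper bound $\eta(s - \eta)$ must genuinely use $v_i \leq \eta$ (otherwise only $s^2/4$ would follow). I would examine a maximizer $v^*$ of the quadratic form $Q(v) := \sum_{i<j,\, i \sim j} v_i v_j$ on the polytope $\mathcal{P}_s := \{v \in \R^n : \sum_i v_i = s,\ 0 \leq v_i \leq \eta\}$, and partition the vertices into $A_0 := \{i : v_i^* = 0\}$, $A_\eta := \{i : v_i^* = \eta\}$, and $A_m := \{i : v_i^* \in (0, \eta)\}$. The first-order Lagrange conditions furnish a multiplier $\lambda$ such that the neighbor-sum $N_i := \sum_{j \sim i} v_j^*$ equals $\lambda$ on $A_m$, is $\geq \lambda$ on $A_\eta$, and is $\leq \lambda$ on $A_0$.

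The key structural reduction is that \emph{$A_m$ induces a clique in $T$}. Indeed, for a non-adjacent pair $u, w \in A_m$, the perturbation $v_u \to v_u + \varepsilon$, $v_w \to v_w - \varepsilon$ changes $Q$ by $\varepsilon(N_u - N_w) - \varepsilon^2 \mathbf{1}_{u \sim w} = 0$ since $N_u = N_w$ and $u, w$ are non-adjacent; one may therefore push $\varepsilon$ until some coordinate leaves $(0, \eta)$, strictly shrinking $|A_m|$ without decreasing $Q$. Since a tree is triangle-free, this forces $|A_m| \leq 2$, with the pair adjacent whenever $|A_m| = 2$.

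It then remains to verify $Q(v^*) \leq \eta(s - \eta)$ in the three cases by counting edges in the induced subforest on $A_\eta \cup A_m$. If $|A_m| = 0$, then $v^*$ is $\{0, \eta\}$-valued, $|A_\eta| = s/\eta$, and $Q = \eta^2 e_1 \leq \eta^2(|A_\eta| - 1) = \eta(s - \eta)$. If $|A_m| = 1$ with middle value $r$ and degree $d^*$ into $A_\eta$, the forest bound $d^* + e_1 \leq |A_\eta|$ (together with the improvement $e_1 \leq |A_\eta| - 1$ in the disconnected sub-case $d^* = 0$) settles the claim after a short case split. If $|A_m| = 2$, the relation $N_u = N_w$ together with $u \sim w$ forces $v_u = v_w =: \beta$ and equal degrees $d_u = d_w =: d$ into $A_\eta$, and the forest edge count on $A_\eta \cup \{u, w\}$ reduces the inequality to $(\eta - \beta)^2 \leq 2d\eta(\eta - \beta)$ for $d \geq 1$, and to $\beta^2 \leq 2\beta\eta$ for $d = 0$ (using the sharper bound $e_1 \leq |A_\eta| - 1$ since $\{u,w\}$ is then its own component), both immediate from $\beta < \eta$. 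The main obstacle is the bookkeeping in the $|A_m| = 2$ case, but the KKT-forced symmetries cleanly reduce it to a one-line quadratic check.
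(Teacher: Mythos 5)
Your proof is correct, but it takes a substantially longer and different route than the paper's. The paper's argument is a one-liner: root the tree at $\rho = \argmax_i v_i$ and orient every edge toward $\rho$, so each edge is of the form $(i, p(i))$ with $i \ne \rho$; since $v_{p(i)} \le v_\rho$ for every $i$, one gets $\sum_{i\sim j} v_i v_j = \sum_{i\ne\rho} v_i v_{p(i)} \le v_\rho \sum_{i\ne\rho} v_i = v_\rho(s-v_\rho)$, and then both branches of \eqref{030} follow at once from the fact that $x \mapsto x(s-x)$ is increasing on $[0,s/2]$ and $v_\rho \le \eta$. Your approach — KKT at a maximizer of $Q$ over the polytope, the mass-shift argument (mirroring the one in Lemma \ref{lemma 02} itself) to conclude $A_m$ induces a clique and hence has at most two vertices since a tree is triangle-free, then forest edge-count bookkeeping in the three sub-cases — is sound: I checked the $\Delta Q = \varepsilon(N_u - N_w) - \varepsilon^2\mathbf{1}_{u\sim w}$ formula, the forced symmetry $v_u^* = v_w^*$, $d_u = d_w$ when $|A_m|=2$ (from $(d_u-d_w)\eta = v_u^*-v_w^*$ and $|v_u^*-v_w^*|<\eta$), and the final quadratic inequalities, and they all hold, including the refined $e_1 \le |A_\eta|-1$ bound in the disconnected sub-cases. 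Your observation that the $s<2\eta$ branch follows directly from Lemma \ref{lemma 02} and does not use the cap $v_i\le\eta$ is also correct and a nice remark. The trade-off is that your argument requires a maximizer to exist (fine by compactness) and a delicate case split, whereas the paper's rooting trick exploits the tree structure to bound every edge product by $v_\rho$ times the other endpoint directly, with no optimization needed. For a lemma used only as a technical ingredient, the shorter proof is preferable, but your version does illuminate why the extremizer is essentially a $\{0,\eta\}$-valued vector on a subtree.
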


\begin{proof}
{Let $\rho=\argmax_{i} v_i$. Now think of the tree as rooted at $\rho$ and orient every edge towards $\rho$. Thus $\sum_{i<j, i\sim j}v_i v_j\le \sum_{i\neq \rho} v_{\rho}v_i=v_{\rho}(s-v_{\rho}).$ Now since the function $x(s-x)$ is monotonically increasing in $x$ for $x\le s/2$ and since $v_{\rho}\le \eta,$ \eqref{030} follows. } 
\end{proof}

\section{Upper tail large deviations: lower bound} \label{section 3}
 To begin with, we state a well known estimate for the tail behavior of the maximum of Gaussian random variables which is a straightforward consequence of the following classical bound (We provide the proofs in the appendix.):
For the standard Gaussian random variable $X$, for any $t>0$, 
\begin{align} \label{tail}
 \frac{1}{\sqrt{2\pi}}  \frac{t}{t^2+1} e^{-t^2/2} \leq  
\mathbb{P}(X>t)  \leq   \frac{1}{\sqrt{2\pi}}  \frac{1}{t}
 e^{-t^2/2}
 \end{align}
(see \cite[Equation (A.1)]{chatterjee}).

\begin{lemma} \label{max gaussian}
Let $X_1,\cdots,X_m$ be i.i.d. standard Gaussian random variables and $m \geq cn$ for  some constant $c>0$. Then,   there exists a constant $c' = c'(c)>0,$ such that  for any $\delta>0$,
\begin{align}
\mathbb{P}( \max_{i=1,\cdots,m} X_i\geq   \sqrt{2(1+\delta) \log n} ) \geq  \frac{c'}{\sqrt{\log n}} \frac{1}{n^\delta}
\end{align}
and
\begin{align}
\mathbb{P}( \max_{i=1,\cdots,m} X_i\leq   \sqrt{2(1-\delta) \log n} ) \leq  e^{-c' \frac{n^\delta}{\sqrt{\log n}}}.
\end{align}
\end{lemma}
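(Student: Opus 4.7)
The plan is to invoke the classical Gaussian tail bound \eqref{tail} together with the independence of the $X_i$; both halves of the lemma follow in a handful of lines, and I do not anticipate any real obstacle. In each case the strategy is to convert $\mathbb{P}(\max_i X_i \leq t)$ into $\prod_i \mathbb{P}(X_i \leq t)$ by independence, plug in \eqref{tail}, and then bound $(1-x)^m$ using $e^{-mx}$ (with a matching $1 - (1-p)^m \geq 1 - e^{-mp}$ in the first part).

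For the lower bound, I would set $t := \sqrt{2(1+\delta)\log n}$ and use the lower half of \eqref{tail} to deduce $p := \mathbb{P}(X_1 \geq t) \geq c_1 n^{-(1+\delta)}/\sqrt{\log n}$ for a constant $c_1 = c_1(\delta) > 0$ (the $\delta$-dependence enters only through $t/(t^2+1) = \Theta(1/\sqrt{(1+\delta)\log n})$). By independence,
\[
\mathbb{P}\bigl(\max_i X_i \geq t\bigr) \;=\; 1 - (1-p)^m \;\geq\; 1 - e^{-mp}.
\]
Since $m \geq cn$, one has $mp \geq cc_1 n^{-\delta}/\sqrt{\log n}$. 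The elementary inequality $1 - e^{-y} \geq \min(y/2,\,1/2)$ for $y \geq 0$ then yields the desired lower bound $c' n^{-\delta}/\sqrt{\log n}$: in the regime $mp \leq 1$ the linear piece $y/2$ gives the stated form directly, while in the (necessarily bounded-$n$) regime $mp > 1$ a uniform lower bound of $1/2$ is absorbed into $c'$ by choosing that constant small enough.

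For the upper bound on the lower tail, I would set $s := \sqrt{2(1-\delta)\log n}$. Again the lower half of \eqref{tail} gives $q := \mathbb{P}(X_1 > s) \geq c_2 n^{-(1-\delta)}/\sqrt{\log n}$. Independence together with $1 - q \leq e^{-q}$ then yields
\[
\mathbb{P}\bigl(\max_i X_i \leq s\bigr) \;=\; (1-q)^m \;\leq\; e^{-mq} \;\leq\; \exp\!\bigl(-cc_2\, n^\delta/\sqrt{\log n}\bigr),
\]
which is exactly the claimed stretched-exponential bound after renaming the constant. The only minor point of care in the whole argument is the case split $mp \leq 1$ versus $mp > 1$ in the first part, which is a routine one-line observation; otherwise the proof is a direct application of \eqref{tail}.
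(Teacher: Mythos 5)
Your proposal is correct and follows essentially the same route as the paper: express $\mathbb{P}(\max_i X_i \leq t) = (1-\mathbb{P}(X_1>t))^m$ by independence, plug in the two-sided Gaussian tail bound \eqref{tail}, and use the elementary inequalities $1-(1-p)^m \geq 1-e^{-mp}$ and $(1-q)^m \leq e^{-mq}$. The one small point worth noting is that both your argument and the paper's pick up a factor $t/(t^2+1) \asymp ((1+\delta)\log n)^{-1/2}$ from \eqref{tail}, so the constant in the lower-tail bound in fact depends on $\delta$ (as you flag with $c_1 = c_1(\delta)$) even though the lemma advertises $c' = c'(c)$ only; this is a harmless imprecision shared by the paper, since in all applications $\delta$ is fixed.
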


As indicated in Section \ref{s:iop}, we first show that the number of non-zero elements of the matrix $Z$ is at least of order $n$ with high probability.  Recall that for us $p=\frac{d}{n}$ in $\cG_{n,p}$ throughout the article and the number of non-zero elements in $X$ is twice the same as the number of edges in the underlying random graph $G$.
Let us define an event
\begin{align}\label{edges}
E_0 := \Big\{ | \{  1\leq i<j\leq n : X_{ij} \neq 0\} | >  \frac{d}{16} n \Big\}.
\end{align} 

\begin{lemma} \label{lemma non zero}
There exists a constant $c>0$ such that for sufficiently large $n$,
\begin{align*}
\mathbb{P}( E_0^c ) \leq  e^{-cn}.
\end{align*}
\end{lemma}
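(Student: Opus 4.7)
The plan is to recognize $N := |\{1 \le i < j \le n : X_{ij} \ne 0\}| = \sum_{1 \le i < j \le n} X_{ij}$ as a sum of $\binom{n}{2}$ i.i.d. Bernoulli$(p)$ random variables with $p = d/n$, and then apply a standard multiplicative Chernoff bound. The mean is
\[
\mathbb{E}[N] \;=\; \binom{n}{2}\frac{d}{n} \;=\; \frac{d(n-1)}{2},
\]
which for all sufficiently large $n$ satisfies $\mathbb{E}[N] \ge \frac{dn}{4}$. The threshold $\frac{dn}{16}$ in the definition of $E_0$ is therefore a constant factor below the mean (roughly a factor of $1/4$), placing us squarely in the regime of exponential lower-tail concentration for binomials.

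Concretely, I would invoke the Chernoff bound in the form
\[
\mathbb{P}\bigl(N \le (1-\eta)\mathbb{E}[N]\bigr) \;\le\; \exp\!\left(-\tfrac{\eta^2}{2}\,\mathbb{E}[N]\right) \qquad (0 < \eta < 1),
\]
and choose $\eta$ so that $(1-\eta)\mathbb{E}[N] \ge \frac{dn}{16}$; for example, since $\mathbb{E}[N] \ge \frac{dn}{4}$ for large $n$, the choice $\eta = \frac{3}{4}$ gives $(1-\eta)\mathbb{E}[N] \ge \frac{dn}{16}$. Hence
\[
\mathbb{P}(E_0^c) \;=\; \mathbb{P}\!\left(N \le \tfrac{dn}{16}\right) \;\le\; \exp\!\left(-\tfrac{9}{32}\mathbb{E}[N]\right) \;\le\; \exp\!\left(-\tfrac{9d}{128}\,n\right),
\]
so the conclusion holds with $c = \frac{9d}{128}$ (or any smaller positive constant).

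There is essentially no obstacle here: the statement is a textbook concentration estimate for a binomial with linearly growing mean, and no property of the Gaussian weights $Y_{ij}$ or of the geometry of $\mathcal{G}_{n,p}$ is needed. The only minor care required is to keep track of the constant: the factor $d/16$ is well below $d/2$, which is why a single application of the multiplicative Chernoff bound (as opposed to a more delicate Bernstein-type estimate) suffices.
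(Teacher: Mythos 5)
Your proof is correct and uses essentially the same tool as the paper: a Chernoff bound for the lower tail of a $\mathrm{Bin}\bigl(\binom{n}{2}, d/n\bigr)$ variable with mean $\Theta(n)$. The paper instead writes the Chernoff bound in relative-entropy form applied to the \emph{upper} tail of the complementary binomial $\mathrm{Bin}\bigl(\binom{n}{2}, 1-d/n\bigr)$ and then estimates the entropy $I_{1-d/n}(1-d/(4n))$ directly, which is slightly more cumbersome; your multiplicative form $\mathbb{P}(N \le (1-\eta)\mathbb{E}N) \le \exp(-\tfrac{\eta^2}{2}\mathbb{E}N)$ with $\eta=3/4$ reaches the same $e^{-cn}$ conclusion more cleanly and yields an explicit constant.
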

This follows from standard large deviation estimates and we include the proof in the appendix for completeness.

\begin{proof}[Proof of Theorem \ref{main theorem 1}: lower bound]
As indicated in Section \ref{s:iop}, there is a slight distinction between $k=2$, and $k\ge 3,$ i.e. the lower bound is governed by two related but distinct events, a large value realized on an edge, or existence of a clique of size at least $3$ with the Gaussians \emph{uniformly}  large on the edges in the clique. \\

\noindent
\textbf{Single large value:}
We first deal with the former case and prove
\begin{align} \label{115}
\limsup_{n\ri}   -\frac{1}{\log n} \log  \mathbb{P}(\lambda_1 \geq \sqrt{2(1+\delta) \log n}  ) \leq \delta.
\end{align}
Since the matrix $Z$ is Hermitian,
\begin{align} \label{bound by max}
\lambda_1 \geq \max_{1\leq i<j\leq n} Z_{ij}.
\end{align}
Thus, 
\begin{align} \label{113}
\mathbb{P}(\lambda_1 \geq \sqrt{2(1+\delta) \log n} )  & \geq    \mathbb{P}( \max_{1\leq i<j\leq n} Z_{ij} \geq \sqrt{2(1+\delta) \log n}  ) \nonumber \\
&\geq  \mathbb{E} \left(\mathbb{P}( \max_{1\leq i<j\leq n} Z_{ij} \geq \sqrt{2(1+\delta) \log n}  \mid X ) \1_{E_0}\right).
\end{align}
By Lemma \ref{max gaussian}, on the event $E_0$, 
\begin{align} \label{114}
\mathbb{P}( \max_{1\leq i<j\leq n} Z_{ij} \geq \sqrt{2(1+\delta) \log n}   \mid X ) \geq C\frac{1}{\sqrt{\log n}} \frac{1}{n^\delta}.
\end{align}
Thus, by \eqref{113}, \eqref{114} and Lemma \ref{lemma non zero}, we obtain \eqref{115}.\\

\noindent
\textbf{Clique construction:}
We now move on to the clique construction. To this end, fix a positive integer $m$ and let $G$ be a network on the clique of size $m$, $K_m$, whose conductances $\{Y_{ij}: 1\leq i<j\leq m\}$ are  i.i.d. standard Gaussians. We denote by {$\lambda(Y)$}  the largest eigenvalue of the adjacency/conductance matrix $Y = (Y_{ij})$ of the network.

By \eqref{tail}, for some constant $C = C(\delta)>0$,
\begin{align} \label{116}
\mathbb{P}( \lambda(Y) \geq  \sqrt{2(1+\delta) \log n} ) &\geq \mathbb{P}\Big(Y_{ij} \geq  \frac{1}{k-1}\sqrt{2(1+\delta)\log n},\ \forall 1\leq i<j\leq k \Big) \nonumber  \\
&\geq {        \left(\frac{C}{\sqrt{\log n}}  n^{-\frac{1+\delta }{(k-1)^2}}\right)^{{k \choose 2}}      }.
\end{align}
Next, we need an estimate of the probability that  a graph contains  a clique of size $k$. This is provided in the next lemma which along with 
 \eqref{116} imply that for any $k\geq 3$,
\begin{align}  \label{118}
\mathbb{P}(\lambda_1 \geq  \sqrt{2(1+\delta) \log n} ) &  \geq  C n^{  - {k \choose 2} +k}   \left(\frac{C}{\sqrt{\log n}}  n^{-\frac{1+\delta }{(k-1)^2}}\right)^{{k \choose 2}} \nonumber \\
&=             { C \left(\frac{C}{\sqrt{\log n}}  \right)^{{k \choose 2}}    n^{ -  \frac{k(k-3)}{2} - \frac{1+\delta}{2} \frac{k}{k-1}}    }          =  n^{-\phi_\delta(k)+o(1)}.
\end{align}
{ 
Since $\phi_\delta(2) = \delta$,  putting  \eqref{115} and \eqref{118} together,
we are done.
}
\end{proof}

\begin{lemma} \label{lemma 31}
Let $k\geq 3$ be a positive integer. Then, there exists a constant $C = C(k,d)>0$ such that
the probability that $\cG_{n,\frac{d}{n}}$ contains a clique of size $k$ is up to universal constants 
\begin{align}  \label{117}
\frac{1}{n^{ {k \choose 2} - k} } .
\end{align}
\end{lemma}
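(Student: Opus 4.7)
The plan is to apply the second moment method to $N_k$, the number of $k$-cliques in $\cG_{n, d/n}$. With $p = d/n$, we have $\E N_k = \binom{n}{k} p^{\binom{k}{2}}$, which is of order $n^{k - \binom{k}{2}}$ up to constants depending on $k$ and $d$. Markov's inequality then gives the upper bound $\P(N_k \geq 1) \leq \E N_k \lesssim n^{k - \binom{k}{2}} = n^{-(\binom{k}{2} - k)}$ immediately.

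For the matching lower bound, I invoke the Paley--Zygmund inequality $\P(N_k \geq 1) \geq (\E N_k)^2 / \E N_k^2$, so it suffices to show $\E N_k^2 = O(\E N_k)$, which would then yield $\P(N_k \geq 1) \gtrsim \E N_k \asymp n^{-(\binom{k}{2} - k)}$. To this end, I expand the second moment by grouping ordered pairs $(S, T)$ of $k$-subsets of $[n]$ by intersection size $j = |S \cap T|$:
\begin{align*}
\E N_k^2 = \sum_{j=0}^{k} \binom{n}{k} \binom{k}{j} \binom{n-k}{k-j}\, p^{2\binom{k}{2} - \binom{j}{2}},
\end{align*}
where $2\binom{k}{2} - \binom{j}{2}$ counts the edges in the union $K_S \cup K_T$ when $|S \cap T| = j$. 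The $j$-th summand is of order $n^{2k - j - 2\binom{k}{2} + \binom{j}{2}}$, and its ratio to $\E N_k \asymp n^{k - \binom{k}{2}}$ simplifies (after regrouping) to $n^{(k-j)(3 - k - j)/2}$. Since $k \geq 3$ and $0 \leq j \leq k$ force $(k-j)(3-k-j)/2 \leq 0$, every summand is $O(\E N_k)$, and summing the $k + 1$ terms gives $\E N_k^2 = O(\E N_k)$, closing the argument via Paley--Zygmund.

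The only subtle point, which one might regard as the main obstacle, is recognizing that for $k \geq 4$ the second moment is dominated by the ``diagonal'' term $j = k$ (which equals $\E N_k$ itself) rather than by the nearly-independent terms $j = 0, 1$ (which are of order $(\E N_k)^2 \ll \E N_k$, since $\E N_k \to 0$). This reflects the heuristic that conditioned on the rare event $\{N_k \geq 1\}$, the clique is typically unique, so the ratio $\E N_k^2 / \E N_k = O(1)$ is driven by double-counting the same clique rather than by pairs of (nearly) disjoint cliques; verifying this uniformly in $j$ is the content of the non-positivity of the exponent $(k-j)(3-k-j)/2$.
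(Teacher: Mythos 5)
Your proposal is correct, and the approach is genuinely different in one respect from the paper's. You work directly with $N_k$, the total number of $k$-cliques, and control the full second moment by expanding over all intersection sizes $j = |S \cap T|$, verifying term by term that the exponent $(k-j)(3-k-j)/2$ is non-positive for $k \geq 3$, $0 \leq j \leq k$ (with equality at $j=k$, and also at $j=0$ when $k=3$). The paper instead works with the number of cliques of size $k$ that are themselves \emph{connected components} of $\cG_{n,p}$; this choice kills every intermediate intersection term ($1 \leq j \leq k-1$) automatically, since two distinct cliques that are each a full component cannot share a vertex, so the second moment collapses to just the $j=k$ and $j=0$ contributions: $\E N_k^2 = \E N_k + \binom{n}{k}\binom{n-k}{k}p^{2\binom{k}{2}}(1-p)^{k^2+2k(n-2k)} \leq \E N_k + (1-p)^{-k^2}(\E N_k)^2$. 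What each approach buys: the paper's device is slicker and avoids any summation over $j$, at the cost of needing the extra $(1-p)^{k(n-k)}$ isolation factor (and checking it stays bounded below); your approach is more elementary and self-contained, requiring only a one-line computation with binomial coefficients, and makes the mechanism transparent — the diagonal dominates precisely because $\E N_k$ is small. Both rest on Paley--Zygmund and yield the same conclusion with constants depending on $k$ and $d$.
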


\begin{proof}
Note that the expected number of cliques is indeed up to constants $\frac{1}{n^{ {k \choose 2} - k}}.$ which implies the upper bound. Thus to lower bound the probability of existence of at least one clique we use the familiar second moment method.
{However as has been used several times in the probabilistic combinatorics literature (see e.g., \cite[Theorem 2.3]{random}), to control the second moment, it will be useful to work with the number of  cliques which are also their respective connected components.} To this end, let us denote their number by $N_k.$  Then,
\begin{align}
\mathbb{E} N_k =  {n \choose k} p ^{ {k\choose 2}}   (1-p)^{k(n-k)}\geq \frac{e^{k-1}d^{ {k \choose 2} } }{k^{k+\frac{1}{2}}} \Big(1-\frac{k}{n}\Big)^k\Big (1-\frac{d}{n}\Big)^{k(n-k)}  \frac{1}{n^{ {k \choose 2} - k}}\geq  C \frac{1}{n^{ {k \choose 2} - k}}  .
\end{align}
where above we use Stirling's formula to approximate $k!$ and we use the bound $n!/(n-k)!\ge (n-k)^k.$
Further,
\begin{align}
\mathbb{E} N_k^2 =  \mathbb{E} N_k + {n \choose k} {n-k \choose k}p ^{ 2{k\choose 2}}   (1-p)^{k^2 + 2k(n-2k)} \leq \mathbb{E}N_k + (1-p)^{-k^2} (\mathbb{E}N_k)^2.
\end{align}
Thus, by the Paley-Zygmund inequality, for sufficiently large $n$,
\begin{align}  
\mathbb{P}(N_k\geq 1) \geq \frac{(\mathbb{E}N_k)^2 }{\mathbb{E}N_k^2} \geq  \frac{1}{ (\mathbb{E}N_k)^{-1} + (1-p)^{-k^2}} \geq  \frac{1}{C n^{ {k \choose 2} - k} +2} .
\end{align}

\end{proof}


\section{Upper tail large deviations: upper bound} \label{section 3.2}
A significant fraction of the novel ideas in the paper can be found in this section which aims to implement the high level strategy outlined in Section \ref{s:iop}. Before beginning, we include a short roadmap to indicate what the different subsections achieve. 
In subsection \ref{subsec5.1} we record tail estimates for sums of squares of Gaussian variables conditioned to be large. In subsection \ref{subsec5.2} we show that with high probability the network $Z^{(2)}$ from Section \ref{s:iop} is spectrally negligible.  We then move on to analyzing the connectivity structure of the graph $X^{(1)} $ underlying the network $Z^{(1)}$, including its maximum degree, size of its connected components and the number of tree excess edges they contain in subsection \ref{subsec5.3}. In subection \ref{subsec5.4} we prove a key proposition (Proposition \ref{prop}) establishing tails for the largest eigenvalue for tree like networks in terms of the largest clique. Finally in subection \ref{subsec5.5}, we prove the upper bound in Theorem \ref{main theorem 1}.

\subsection{Chi-square tail estimates:}\label{subsec5.1} We record the following estimate that will be crucial in our applications whose proof is provided in the appendix.
\begin{lemma} \label{chi tail}
Let $\tilde{Y}$ be a standard Gaussian conditioned on $|\tilde{Y}|>\sqrt{\varepsilon \log \log  n}$, and
 denote $ \tilde{Y}_1,\cdots,\tilde{Y}_m$ by independent copies of $\tilde Y$.
Then, there exists a universal constant $C>0$ such that  for any $L>m$ and $\e>0$,
\begin{align} \label{211}
\mathbb{P}(\tilde{Y}_1^2+\cdots+\tilde{Y}_m^2 \geq   L )  \leq  C^m e^{-\frac{1}{2}L} e^{\frac{1}{2}m} \Big(\frac{L}{m}\Big)^{m}  e^{\frac{1}{2} \varepsilon m \log \log n} .
\end{align}
{
In particular, for any  $a,b,c>0$, let $ m \leq b\frac{\log n}{\log \log n}+c$ and $L = a\log n$. Then, for any $\gamma>0$,  for sufficiently large $n$,
\begin{align} \label{212}
\mathbb{P}(\tilde{Y}_1^2+\cdots+\tilde{Y}_m^2 \geq   a\log n )  \leq    n^{-\frac{a}{2} + \frac{\varepsilon b}{2} + \gamma} .
\end{align}
}
\end{lemma}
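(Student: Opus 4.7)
The plan is a Cram\'er--Chernoff argument applied to the sum $\tilde Y_1^2+\cdots+\tilde Y_m^2$, with care taken to track how conditioning on $|\tilde Y|>\sqrt{\varepsilon\log\log n}$ inflates the moment generating function of $\tilde Y^2$. First I would compute, for $t\in(0,1/2)$,
\[
\mathbb{E}\left[e^{t\tilde Y^2}\right] \;=\; \frac{1}{\sqrt{1-2t}}\cdot\frac{\mathbb{P}\left(|Y|>\sqrt{(1-2t)\varepsilon\log\log n}\right)}{\mathbb{P}\left(|Y|>\sqrt{\varepsilon\log\log n}\right)},
\]
by writing out the conditional density of $\tilde Y$ and substituting $u=y\sqrt{1-2t}$ in the resulting Gaussian integral. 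Controlling the ratio of tail probabilities via \eqref{tail}, I would split into cases according to whether $\varepsilon\log\log n$ is bounded below by a positive constant (where the two-sided Gaussian estimates in \eqref{tail} directly give the desired bound) or is small (where both probabilities are $\Theta(1)$ and a trivial bound suffices). In either case this yields, for a universal constant $C$,
\[
\mathbb{E}\left[e^{t\tilde Y^2}\right]\;\leq\; \frac{C}{1-2t}\,e^{t\varepsilon\log\log n}, \qquad t\in(0,1/2).
\]

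Next, by independence and Markov,
\[
\mathbb{P}(\tilde Y_1^2+\cdots+\tilde Y_m^2\geq L)\;\leq\; \frac{C^m}{(1-2t)^m}\,e^{-tL+tm\varepsilon\log\log n}.
\]
I would then choose $t=(L-m)/(2L)$, which lies in $(0,1/2)$ precisely because $L>m$ (and $m\geq 1$). With $1-2t=m/L$, a direct substitution gives $e^{-tL}=e^{-L/2}e^{m/2}$, $(1-2t)^{-m}=(L/m)^m$, and $e^{tm\varepsilon\log\log n}\leq e^{m\varepsilon\log\log n/2}$ (using $(L-m)/L\leq 1$), which multiplied together yield \eqref{211}.

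For \eqref{212} I would plug $L=a\log n$ and $m\leq b\log n/\log\log n + c$ into \eqref{211}. The factor $e^{-L/2}$ is exactly $n^{-a/2}$, and $e^{m\varepsilon\log\log n/2}$ is at most $n^{\varepsilon b/2+o(1)}$. The factors $C^m$ and $e^{m/2}$ are each $n^{o(1)}$ since $m=O(\log n/\log\log n)$. The only subtle factor is $(L/m)^m=\exp(m\log(L/m))$: the function $m\mapsto m\log(L/m)$ is increasing on the allowed range (whose right endpoint is well below $L/e$), and at $m=b\log n/\log\log n$ it equals $(b\log n/\log\log n)\log(a\log\log n/b)$, which is $O(\log n\cdot\log\log\log n/\log\log n)=o(\log n)$. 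Hence $(L/m)^m=n^{o(1)}$ as well, and multiplying the pieces gives the claimed $n^{-a/2+\varepsilon b/2+\gamma}$ bound for any $\gamma>0$ once $n$ is large. The main (mild) obstacle is establishing the uniform control of the MGF across regimes of $\varepsilon\log\log n$; once that case analysis is settled, the rest is a routine optimized Chernoff calculation.
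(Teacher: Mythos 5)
Your proposal is correct and follows essentially the same route as the paper's proof: a Chernoff bound after computing $\mathbb{E}[e^{t\tilde Y^2}]\le \frac{C}{1-2t}e^{t\varepsilon\log\log n}$ using the Gaussian tail estimates \eqref{tail}, the identical optimal choice $t=\frac{1}{2}(1-m/L)$, and the monotonicity of $m\mapsto (L/m)^m$ on $(0,L/e)$ for \eqref{212}. The only cosmetic difference is that you express the MGF as a ratio of two Gaussian tail probabilities (with an explicit small-$s$/large-$s$ case split), whereas the paper bounds the conditional density directly; the underlying estimate is the same.
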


Recall from Section \ref{s:iop}, the decompositions
\begin{align*} 
Y_{ij} = Y^{(1)}_{ij} + Y^{(2)}_{ij},
\end{align*}
where $Y^{(1)}_{ij}=  Y_{ij}\1_{|Y_{ij}| > \sqrt{\varepsilon \log \log n}}$ and similarly $Y^{(2)}_{ij}=  Y_{ij}\1_{|Y_{ij}| \leq \sqrt{\varepsilon \log \log n}}.$
Thus, we can write the matrix $Z$ as $Z^{(1)}+Z^{(2)}$ with 
\begin{align}\label{decomposition1}
Z^{(1)}_{ij} = X_{ij} Y^{(1)}_{ij}, \quad  Z^{(2)}_{ij} = X_{ij} Y^{(2)}_{ij}.
\end{align}

\subsection{Spectrally negligible component}\label{subsec5.2}
We next prove an upper bound on the probability that $Z^{(2)}$ has high spectral norm.

\begin{lemma} \label{lemma 32}
For $\delta>0$,
\begin{align*} 
 \lim_{n\ri} \frac{-\log \mathbb{P}( \lambda_1 (Z^{(2)})   \geq  \sqrt{\varepsilon} (1+\delta) \sqrt{\log n}    )  }{\log n} \geq  2 \delta+ \delta^2.
\end{align*}

\end{lemma}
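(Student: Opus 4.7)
The plan is to dominate $\lambda_1(Z^{(2)})$ by the spectral norm of the underlying random graph $X$ itself, and then to invoke the sharp upper tail estimates for $\lambda_1(X)$ from the already-referenced work \cite{ganguly1}.

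The first step is a deterministic entrywise domination. Since $|Y^{(2)}_{ij}| \leq \sqrt{\varepsilon \log \log n}$, one has $|Z^{(2)}_{ij}| \leq \sqrt{\varepsilon\log \log n}\, X_{ij}$ for every $i,j$. For any unit vector $v \in \mathbb{R}^n$, writing $|v|$ for its coordinate-wise absolute value,
\[
v^\top Z^{(2)} v \;\leq\; |v|^\top |Z^{(2)}| \,|v| \;\leq\; \sqrt{\varepsilon\log \log n}\;|v|^\top X |v| \;\leq\; \sqrt{\varepsilon\log \log n}\; \lambda_1(X),
\]
where the last inequality uses that $X$ is symmetric and non-negative, so by Perron--Frobenius, $\lambda_1(X) = \max_{\|w\|_2 = 1,\, w \geq 0} w^\top X w$. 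Taking the supremum over $v$ gives $\lambda_1(Z^{(2)}) \leq \sqrt{\varepsilon \log \log n}\, \lambda_1(X)$, and hence
\[
\mathbb{P}\bigl(\lambda_1(Z^{(2)}) \geq \sqrt{\varepsilon}\,(1+\delta)\sqrt{\log n}\bigr) \;\leq\; \mathbb{P}\bigl(\lambda_1(X) \geq (1+\delta)\sqrt{\log n/\log \log n}\bigr).
\]

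Next I would invoke \cite{ganguly1}. In the constant average degree regime this work establishes that, at the large deviation scale, the upper tail of $\lambda_1(X)$ is governed by that of $\sqrt{\Delta(X)}$: precisely, for any $\eta > 0$,
\[
\mathbb{P}\bigl(\lambda_1(X) \geq (1+\delta)\sqrt{\log n/\log \log n}\bigr) \;\leq\; \mathbb{P}\bigl(\Delta(X) \geq (1+\delta-\eta)^2 \log n/\log\log n\bigr)\cdot n^{o(1)}.
\]
A routine binomial tail estimate, $\mathbb{P}(\Delta(X) \geq k) \leq n\,(ed/k)^k$ with $k = \alpha \log n/\log\log n$, combined with $\log(k/(ed)) = (1-o(1))\log\log n$, then shows the right-hand side is $n^{1 - \alpha + o(1)}$. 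Choosing $\alpha = (1 + \delta - \eta)^2$ and sending $\eta \to 0$ produces the exponent $(1+\delta)^2 - 1 = 2\delta + \delta^2$ as required.

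The main nontrivial input is the spectral-vs-degree comparison from \cite{ganguly1} used in the second step. The naive Ger\v{s}gorin bound $\lambda_1(X) \leq \Delta(X)$ would only demand $\Delta(X) \geq (1+\delta)\sqrt{\log n/\log \log n}$, whose probability decays only like $e^{-\Theta(\sqrt{\log n\,\log\log n})}$ and is far too weak on the polynomial scale $n^{-(2\delta+\delta^2)}$ required here. Likewise, Proposition~\ref{spectral bound} applied to all of $X$ is inadequate since $\|X\|_F^2$ is of order $n$. It is the finer spectral localization of \cite{ganguly1}, effectively identifying $\lambda_1(X)$ with $\sqrt{\Delta(X)}$ at the large deviation scale, that supplies the sharp input; all other ingredients are essentially routine.
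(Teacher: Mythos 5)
Your proof is correct and takes essentially the same route as the paper: both rest on the entrywise domination $|Z^{(2)}_{ij}| \leq \sqrt{\varepsilon \log\log n}\, X_{ij}$, giving $\lambda_1(Z^{(2)}) \leq \sqrt{\varepsilon \log\log n}\,\lambda_1(X)$, and then reduce to the upper tail of $\lambda_1(X)$ via \cite{ganguly1}. The paper simply quotes \cite[Theorem~1.1]{ganguly1}, which states the exact exponent $2\delta+\delta^2$ outright, whereas you reconstruct that exponent through a degree comparison plus a binomial tail bound -- this is the same invocation dressed differently, not a distinct argument.
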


\begin{proof}
The proof relies on the results of the previously mentioned recent work \cite{ganguly1}.
By \cite[Theorem 1.1]{ganguly1},
\begin{align*} 
 \lim_{n\ri} \frac{-\log \mathbb{P}(  \lambda_1(X)  \geq  (1+\delta) \sqrt{\frac{\log n}{\log \log n}} )  }{\log n} = 2 \delta+ \delta^2. 
\end{align*}
Since 
$|Z^{(2)}_{ij}| \leq  X_{ij}  \sqrt{\varepsilon \log \log n},$ we have $ \lambda_1 (Z^{(2)})  \leq   \sqrt{\varepsilon \log \log n} \cdot  \lambda_1(X) $ which concludes the proof.
\end{proof}

\subsection{Connectivity structure of highly sub-critical Erd\H{o}s-R\'enyi graphs}\label{subsec5.3}
We will now shift our focus to $Z^{(1)}.$ Recall that $X^{(1)}_{ij}=X_{ij}\1_{|Y_{ij}| > \sqrt{\varepsilon \log \log n}}.$ By the tail bound for Gaussian stated in \eqref{tail}, {for large $n$,} $X^{(1)}$ is distributed as $\cG_{n, q}$ with
\begin{align}\label{newdensity}
q \leq  \frac{d}{n} \frac{1}{\sqrt{2\pi}} e^{-\frac{1}{2}\varepsilon \log \log n}  = \frac{d'}{n}\frac{1}{(\log n)^{\varepsilon/2}},
\end{align}
where $d' = \frac{d}{\sqrt{2\pi}}$.

 For any graph $G$, we denote by $d_1(G)$, the largest degree of  $G$. It is proved in \cite{KS} (see also \cite[Proposition 1.3]{ganguly1}) that the typical value of $d_1(\cG_{n,r})$  is $\frac{\log n}{\log \log n - \log (n r)}$, when
 \begin{align*}
 \log n  \gg \log (1/nr) \quad \text{and} \quad nr    \ll   \sqrt{\frac{\log n }{\log \log n}}.
 \end{align*}
Furthermore, the following large deviation result is a consequence of \cite[Proposition 1.3]{ganguly1}. 
\begin{lemma} 
 \label{lemma 33}
For $\delta_1>0$,
let  $\cD_{\delta_1}$ be an event defined by
\begin{align}\label{degsize}
\cD_{\delta_1} :=  \Big\{d_1{(X^{(1)})} \leq  (1+\delta_1) \frac{\log n}{\log \log n}\Big\}.
\end{align}
Then,
\begin{align*}
 \lim_{n\ri} \frac{-\log \mathbb{P}(  \cD_{\delta_1}^c  )  }{\log n}  \ge \delta_1.
\end{align*}
\end{lemma}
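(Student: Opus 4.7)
The plan is to reduce the statement directly to the large deviation estimate for the maximum degree of $\cG_{n,d/n}$ established in \cite[Proposition 1.3]{ganguly1}. The essential observation is that, by construction, $X^{(1)}$ is obtained from $X \sim \cG_{n, d/n}$ by retaining only those edges whose associated Gaussian weight exceeds $\sqrt{\varepsilon \log\log n}$ in absolute value. Consequently, $X^{(1)}$ is (deterministically) a subgraph of $X$, which gives the pointwise domination $d_1(X^{(1)}) \le d_1(X)$. Hence
\begin{align*}
\mathbb{P}(\cD_{\delta_1}^c) = \mathbb{P}\Big(d_1(X^{(1)}) > (1+\delta_1) \tfrac{\log n}{\log\log n}\Big) \le \mathbb{P}\Big(d_1(X) > (1+\delta_1)\tfrac{\log n}{\log\log n}\Big),
\end{align*}
and the level $(1+\delta_1)\log n/\log\log n$ is precisely the upper-tail threshold at which \cite[Proposition 1.3]{ganguly1} identifies rate $\delta_1$ for the maximum degree of $\cG_{n,d/n}$.

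For a self-contained derivation avoiding the black-box invocation, I would proceed by a union bound. Using the fact that $X^{(1)} \stackrel{d}{=} \cG_{n,q}$ with $q \le \frac{d'}{n(\log n)^{\varepsilon/2}}$ (as noted in \eqref{newdensity}), the degree of any fixed vertex in $X^{(1)}$ is stochastically dominated by $\text{Bin}(n-1, q)$, and for $k = \lceil (1+\delta_1)\log n/\log\log n\rceil$,
\begin{align*}
\mathbb{P}\big(\text{Bin}(n-1, q) \ge k\big) \le \binom{n}{k} q^k \le \Big(\frac{enq}{k}\Big)^{k}.
\end{align*}
Taking logarithms and using $nq = O((\log n)^{-\varepsilon/2})$ and $\log k = \log\log n + O(\log\log\log n)$ gives
\begin{align*}
k\log\Big(\frac{enq}{k}\Big) \le -\Big(1+\tfrac{\varepsilon}{2}\Big)(1+\delta_1)\log n + o(\log n),
\end{align*}
so the per-vertex probability is at most $n^{-(1+\varepsilon/2)(1+\delta_1) + o(1)}$. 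A union bound over the $n$ vertices then produces $\mathbb{P}(\cD_{\delta_1}^c) \le n^{-\delta_1 - (\varepsilon/2)(1+\delta_1) + o(1)} \le n^{-\delta_1 + o(1)}$, which is the claimed bound (in fact with room to spare coming from the extra $\varepsilon/2$ factor).

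There is no serious obstacle here, as the argument is a one-shot first moment / union bound once the graph $X^{(1)}$ is identified with $\cG_{n,q}$ for an appropriate $q$. The only point to handle carefully is the bookkeeping that confirms the threshold $(1+\delta_1)\log n/\log\log n$ aligns with the Poisson-type tail $(enq/k)^k$ so as to produce exactly the exponent $\delta_1$; this is what the formula for typical maximum degree in the sparse regime recorded just above the lemma ensures. Either the direct calculation above or a quotation of \cite[Proposition 1.3]{ganguly1} will suffice.
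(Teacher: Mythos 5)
Your first paragraph is essentially identical to the paper's proof: the paper also observes that $d_1$ is monotone in the graph, that $\cG_{n,d/n}$ stochastically dominates $X^{(1)}\sim\cG_{n,q}$, and then quotes \cite[Proposition~1.3]{ganguly1} for the rate. (There is a small implicit step in both arguments, namely that the threshold in \cite{ganguly1} is $(1+\delta_1)\frac{\log n}{\log\log n-\log d}=(1+\delta_1)(1+o(1))\frac{\log n}{\log\log n}$, so the $\log d$ correction washes out in the limit and yields the inequality rather than the equality.)

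Your second paragraph, however, adds something the paper does not: a self-contained first-moment computation that avoids the black-box reference entirely. It correctly identifies $X^{(1)}\sim\cG_{n,q}$ with $nq=O((\log n)^{-\varepsilon/2})$ from \eqref{newdensity}, bounds the per-vertex degree tail by $(enq/k)^k$ with $k=\lceil(1+\delta_1)\log n/\log\log n\rceil$, and carries out the bookkeeping $\log(enq/k)=-(1+\tfrac{\varepsilon}{2})\log\log n + O(\log\log\log n)$ to obtain a per-vertex probability $n^{-(1+\varepsilon/2)(1+\delta_1)+o(1)}$ and, after the union bound, $\mathbb{P}(\cD_{\delta_1}^c)\le n^{-\delta_1-(\varepsilon/2)(1+\delta_1)+o(1)}$. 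This is correct, is in fact slightly stronger than what the lemma asserts (the exponent is strictly larger than $\delta_1$ because $X^{(1)}$ is sparser than $\cG_{n,d/n}$), and does not require the slightly delicate $(\log\log n - \log d)$ versus $\log\log n$ normalization issue that the reference-based route relies on. Either route is acceptable; the direct computation has the advantage of being self-contained and transparent.
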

\begin{proof}
The statement, {where the inequality above is replaced with an equality,} for the case $r=\frac{d}{n}$ is obtained in \cite[Proposition 1.3]{ganguly1}, by plugging in $r=\frac{d}{n}$ in the latter and noting that in this case $$\frac{\log n}{\log \log n - \log (n r)}=\frac{\log n}{\log \log n - \log d}.$$ The above result then follows by observing that $\cG_{n,\frac{d}{n}}$ stochastically dominates $\cG_{n,q}$ and $d_{1}(G)$ is an increasing function of the graph. 
\end{proof}

We next move on to a refined analysis of the connectivity structure of the graph $X^{(1)}$. 
Towards this, let $C_1,\cdots,C_m$ be its connected components. The next lemma establishes a bound of the order of $\frac{\log n}{\log\log n}$ on the size of the largest component in contrast to the bounds of $\Theta(\log n)$, $\Theta(n^{2/3}),$  or $\Theta(n)$, that one has for $\cG_{n,\frac{d}{n}}$ depending on if $d<1, d=1$ or $d>1.$ This sub-logarithmic bound will be crucial in our application and justifies our sparsification step. 

\begin{lemma} \label{lemma 34}
For $\delta_2>0$,
let  $\cC_{\delta_2}$ be the following event.  \begin{align}\label{connsize}
\cC_{\delta_2}:=\Big \{ |C_i| \leq   \frac{2+\delta_2}{\varepsilon}  \frac{\log n}{\log \log n},\ \forall i\Big\}.
\end{align}
Then, 
\begin{align*}
 \liminf_{n\ri} \frac{-\log \mathbb{P}(  \cC_{\delta_2}^c  )  }{\log n}  \geq \frac{\delta_2}{2}.
\end{align*}
\end{lemma}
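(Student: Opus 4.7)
By \eqref{newdensity}, $X^{(1)}$ is distributed as $\cG_{n,q}$ with $q \le \frac{d'}{n(\log n)^{\varepsilon/2}}$, so crucially $nq = o(1)$; the graph is very deeply subcritical. In this regime, the first moment method applied to spanning trees of connected subgraphs should already be sharp enough, because any large component must contain a spanning tree, and trees on $k$ vertices are rare at edge density $q$ when $nq$ is small. The plan is: (i) reduce $\cC_{\delta_2}^c$ to the event that a \emph{tree} on $k_0$ vertices appears in $X^{(1)}$ for a suitably chosen $k_0$; (ii) bound the expected number of such trees via Cayley's formula; (iii) verify that the resulting bound is $n^{-\delta_2/2 + o(1)}$.

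Set $k_0 := \lfloor \tfrac{2+\delta_2}{\varepsilon}\cdot\frac{\log n}{\log\log n}\rfloor + 1$. On the event $\cC_{\delta_2}^c$ some connected component $C_i$ satisfies $|C_i|\ge k_0$; by repeatedly pruning leaves from a spanning tree of $C_i$ one extracts a connected induced subgraph on exactly $k_0$ vertices, which in particular contains a spanning tree on those $k_0$ vertices. Let $N_{k_0}$ denote the number of pairs $(S,T)$ with $S\subset[n]$, $|S|=k_0$, and $T$ a spanning tree of $S$ whose edges are all present in $X^{(1)}$. Then $\cC_{\delta_2}^c \subseteq \{N_{k_0}\ge 1\}$. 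By Cayley's formula there are $k_0^{k_0-2}$ labeled trees on $k_0$ vertices, each appearing with probability $q^{k_0-1}$, so using $\binom{n}{k_0}\le (en/k_0)^{k_0}$,
\begin{align*}
\E[N_{k_0}] \le \binom{n}{k_0} k_0^{k_0-2}\, q^{k_0-1} \le \frac{(enq)^{k_0}}{k_0^2\, q}.
\end{align*}

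It remains to show this is at most $n^{-\delta_2/2 + o(1)}$. Substituting $nq \le d'(\log n)^{-\varepsilon/2}$ and $q^{-1} \le n(\log n)^{\varepsilon/2}/d'$ yields the bound $\frac{n (ed')^{k_0}}{d'\, k_0^2}\cdot (\log n)^{-\varepsilon(k_0-1)/2}$. Since $k_0 = O(\log n /\log\log n)$ and $\log(ed')$ is a constant, $(ed')^{k_0} = n^{o(1)}$; meanwhile, the choice of $k_0$ gives
\begin{align*}
(\log n)^{\varepsilon(k_0-1)/2} = \exp\!\Big(\tfrac{\varepsilon(k_0-1)}{2}\log\log n\Big) = n^{1+\delta_2/2 + o(1)}.
\end{align*}
Combining, $\E[N_{k_0}] \le n^{-\delta_2/2+o(1)}$, and Markov's inequality finishes the job. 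The only obstacle here is cleanly tracking the $\log\log n$ factors; conceptually the estimate is just a standard first moment computation on spanning trees in a very sparse Erd\H os--R\'enyi graph, and no concentration or more refined argument is needed.
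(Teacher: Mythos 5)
Your proof is correct and is essentially the paper's first-moment argument on trees in $\cG_{n,q}$, with one tidy streamlining: by pruning a spanning tree of an oversized component down to exactly $k_0$ vertices you need only control $N_{k_0}$, whereas the paper bounds $\E N_k$ and then sums over all $k$ at least $k_0$. One small slip in wording: the inequality $q^{-1}\le n(\log n)^{\varepsilon/2}/d'$ points the wrong way given the upper bound on $q$; this is harmless here because $\binom{n}{k_0}k_0^{k_0-2}q^{k_0-1}$ is increasing in $q$, so substituting the upper bound on $q$ directly yields the same estimate $\frac{n(ed')^{k_0}}{d'k_0^{2}}(\log n)^{-\varepsilon(k_0-1)/2}$ and the rest of the computation goes through.
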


\begin{proof} 
 The proof implements the standard first moment argument, (see e.g.,  \cite[Chapter 5,6]{bollobas}).
Let  $\bar{N}_{k,-1}$ be the number of \it{connected} subgraphs having $k$ vertices and $k-1$ edges, in other words the number of trees of size $k$. Using  \eqref{newdensity} and Stirling's formula, and the fact that  the number of labelled spanning trees on $k$ vertices is $k^{k+2}$, for some large constant $c_0>0$,
\begin{align} \label{343}
 \mathbb{E}\bar{N}_{k,-1} &\leq   {n \choose k} k^{k+2} \Big(\frac{d'}{n}\frac{1}{(\log n)^{\varepsilon/2}}\Big)^{k-1}   \nonumber  \\
 &  \leq  C   e^k \frac{n^k}{k^k} k^{k+2}  \frac{(d')^{k-1}}{n^{k-1} (\log n)^{\frac{\varepsilon}{2} (k-1)} }  =  C  n    \frac{ e^{k}k^2(d')^{k-1}}{(\log n)^{\frac{\varepsilon}{2} (k-1)} } \leq  {C   n (\log n)^{\varepsilon/2} \Big(\frac{c_0}{(\log n)^{\varepsilon/2}}\Big)^k}.
\end{align}
Hence, denoting $N_k$ by the number of connected components with $k$ vertices, picking a spanning tree from each connected component, one obtains 
\begin{align*}
\mathbb{E} N_k  &\leq  \mathbb{E}\bar{N}_{k,-1}  
\leq   C n (\log n)^{\varepsilon/2} \Big(\frac{c_0}{(\log n)^{\varepsilon/2}}\Big)^k.
\end{align*}
Define $m:=\frac{2+\delta_2}{\varepsilon} \frac{\log n}{\log \log n}$, and let $N$ be the number of connected components  having  at least $m$ vertices. Then,
\begin{align*}
\mathbb{E} N = 
\mathbb{E} \sum_{k=m}^n N_k     \leq C n(\log n)^{\varepsilon/2} \Big(\frac{c_0}{(\log n)^{\varepsilon/2} }\Big)^m   \leq  C (\log n)^{\varepsilon/2} {     n^{ \frac{  ( \log c_0 ) (2+\delta_2)}{\varepsilon \log \log n}}       } n^{- \frac{\delta_2}{2} } .
\end{align*}
Since $\P(N\ge 1)\le \E(N)$, the proof is complete.
\end{proof}

For our applications, we will also need to bound the number of subgraphs having $k$ vertices and $k+\ell $ edges without the subgraph necessarily being connected. This estimate  will be  crucially used later to prove the structure theorem conditioned on $\cU_\delta$.

\begin{lemma}\label{subgraphcount}For $\ell\geq 0$,
 let $N_{k,\ell}$ be the number of  subgraphs in $X^{(1)}$ having  $k$ vertices and $k+\ell$ edges. Then, for  $0\leq \ell \leq {k\choose 2}-k$,
$$ \mathbb{E}N_{k,\ell} \le C\min \left(\Big(\frac{k}{n}\Big)^\ell, \Big(\frac{d'e^2}{(\log n)^{\varepsilon/2}} \Big)^{k+\ell} \right).$$
\end{lemma}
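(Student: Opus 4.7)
The plan is to apply a standard first moment (union) bound and then compress the resulting combinatorial expression using Stirling, observing that both halves of the claimed minimum in fact follow from a single factored upper bound.

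First I would write
$$\mathbb{E}N_{k,\ell}\le \binom{n}{k}\binom{\binom{k}{2}}{k+\ell}q^{k+\ell},$$
where $q$ is the edge probability of $X^{(1)}$ from \eqref{newdensity}: there are $\binom{n}{k}$ ways to choose the $k$-vertex set, $\binom{\binom{k}{2}}{k+\ell}$ ways to select the $k+\ell$ edges among those vertices, and each such configuration is present in $X^{(1)}$ with probability $q^{k+\ell}$. The hypothesis $0\le \ell\le \binom{k}{2}-k$ is precisely what is needed to make the inner binomial coefficient nontrivial.

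Next I would apply the elementary bounds $\binom{n}{k}\le n^k/k!$ and $\binom{\binom{k}{2}}{k+\ell}\le (k^2/2)^{k+\ell}/(k+\ell)!$, together with the Stirling lower bounds $k!\ge (k/e)^k$ and $(k+\ell)!\ge ((k+\ell)/e)^{k+\ell}$. Substituting $q\le \frac{d'}{n(\log n)^{\varepsilon/2}}$ and tracking the powers of $n$, $k$, $k+\ell$, $d'$ and $\log n$, the factorial in the edge count cancels a chunk of $k^{2(k+\ell)}$: using $k/(k+\ell)\le 1$ consolidates $k^{2(k+\ell)}/(k+\ell)^{k+\ell}$ into $k^{k+\ell}$, and using $e^k\le e^{k+\ell}$ absorbs the remaining constant into an $e^2$. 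The exponents collapse to the factored estimate
$$\mathbb{E}N_{k,\ell}\le \Big(\frac{k}{n}\Big)^\ell \Big(\frac{d'e^2}{(\log n)^{\varepsilon/2}}\Big)^{k+\ell}.$$

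To conclude the stated minimum bound, I would observe that each of the two factors on the right is at most $1$: the first because $k\le n$, and the second because $(\log n)^{\varepsilon/2}\to\infty$ with $n$. Hence each of the two factors individually upper bounds $\mathbb{E}N_{k,\ell}$, and so therefore does their minimum; an absolute constant $C$ absorbs any boundary effect at small $n$. The only real obstacle here is the exponent book-keeping in the Stirling step --- there is no probabilistic subtlety, and in particular no second moment argument is needed since we only seek an upper bound on $\mathbb{E}N_{k,\ell}$.
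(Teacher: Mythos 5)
Your proof is correct and follows essentially the same route as the paper: first moment bound $\binom{n}{k}\binom{\binom{k}{2}}{k+\ell}q^{k+\ell}$, Stirling lower bounds on the factorials, consolidate $k^{2(k+\ell)}/(k+\ell)^{k+\ell}\le k^{k+\ell}$ and $e^{2k+\ell}\le e^{2(k+\ell)}$ to reach the factored estimate, then observe each factor is separately at most $1$ (using $k\le n$ and $d'e^2\le(\log n)^{\varepsilon/2}$ for large $n$). The only cosmetic difference is that you keep the factor $k^2/2$ where the paper uses $k^2$, which is immaterial.
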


\begin{proof}
Denote by $C_{k,\ell}$ the number of  labelled   graphs with $k$ vertices and $k+\ell$ edges. Then, for any $-k\leq \ell \leq {k\choose 2}-k$,
using Stirling's formula
we have
\begin{align}
C_{k,\ell}  =  { {k \choose 2} \choose k+\ell} \leq {k^2 \choose k+\ell} \leq \frac{(k^2)^{k+\ell}}{(k+\ell)!} \leq  e^{k+\ell} \frac{k^{2(k+\ell)}}{(k+\ell)^{k+\ell}}.
\end{align}

Then, for  $0\leq \ell \leq {k\choose 2}-k$,
\begin{align} \label{342}
 \mathbb{E}N_{k,\ell} & \leq   {n \choose k}  C_{k,\ell} q^{k+\ell}
 \overset{\eqref{newdensity}}{\leq}   C  e^{2k+\ell} \frac{n^k}{k^k}    \frac{k^{2(k+\ell)}}{(k+\ell)^{k+\ell}}  \Big( \frac{d'}{n}\frac{1}{(\log n)^{\varepsilon/2}}\Big)^{k+\ell}  \leq C \Big (\frac{k}{n}\Big)^\ell   \Big(\frac{d'e^2}{(\log n)^{\varepsilon/2}} \Big)^{k+\ell} .   
\end{align}
where in the first inequality we use Stirling's formula again to bound $k!.$
In particular, since $k\leq n$,  
\begin{align} \label{345}
\mathbb{E}N_{k,\ell} \leq  C\Big(\frac{d'e^2}{(\log n)^{\varepsilon/2}} \Big)^{k+\ell},
\end{align}
and since $d'e^2 \leq  (\log n)^{\varepsilon/2}$  for sufficiently large $n$,
\begin{align} \label{346}
 \mathbb{E}N_{k,\ell} \leq C\Big (\frac{k}{n}\Big)^\ell.
\end{align}
\end{proof}

Having bounded the maximal component size, we next proceed to estimating how close the components are to trees by bounding the number of tree excess edges, i.e., how many edges need to be removed from such a component to obtain a tree.

\begin{lemma} \label{lemma 35}
For   $\delta_3 \geq 1$, let $\cE_{\delta_3}$ be the event defined by
\begin{align}\label{treeexcess}
\cE_{\delta_3}:= \{ |E(C_i)| < |V(C_i)| + \delta_3, \ \forall i\}.
\end{align}
Then,
\begin{align} \label{350}
 \liminf_{n\ri} \frac{-\log \mathbb{P}(  \cE_{\delta_3}^c  )  }{\log n}  \geq \delta_3.
\end{align}
In addition, define the event $\cT$ by
\begin{align*}
\cT:=\{ |E(C_i)| = |V(C_i)| -1,  \ \forall i\}.
\end{align*}
In other words, $\cT$ is the event that  all the connected components of $X^{(1)}$ are trees. Then,  
\begin{align} \label{351}
\mathbb{P}(\cT^c) \leq  \frac{C}{(\log n)^\varepsilon}.
\end{align}
\end{lemma}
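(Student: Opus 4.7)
The plan is to use the subgraph counting bound from Lemma \ref{subgraphcount} together with the component-size control from Lemma \ref{lemma 34}. For \eqref{350}, the event $\cE_{\delta_3}^c$ forces the existence of a component $C_i$ with $|E(C_i)| \geq |V(C_i)| + \delta_3$, which in particular contains a subgraph on some $k$ vertices with exactly $k+\delta_3$ edges. A pure union bound via the subgraph count $N_{k,\delta_3}$ then suffices, provided we can control the range of $k$.

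Here is the obstacle: if I apply Lemma \ref{subgraphcount} with the bound $\E N_{k,\delta_3} \le C(k/n)^{\delta_3}$ naively for all $k$ up to $n$, the sum $\sum_{k=1}^n (k/n)^{\delta_3}$ is of order $n$ and yields no useful estimate. The cure is to decompose
\begin{align*}
\P(\cE_{\delta_3}^c)  \le  \P(\cE_{\delta_3}^c \cap \cC_{\delta_2})  +  \P(\cC_{\delta_2}^c),
\end{align*}
and pick $\delta_2 \ge 2\delta_3$ so that the second term is bounded by $n^{-\delta_3+o(1)}$ thanks to Lemma \ref{lemma 34}. On the event $\cC_{\delta_2}$, every component has at most $M := \tfrac{2+\delta_2}{\varepsilon}\tfrac{\log n}{\log\log n}$ vertices, so
\begin{align*}
\P(\cE_{\delta_3}^c \cap \cC_{\delta_2})  \le  \sum_{k=1}^{M} \E N_{k,\delta_3}  \le  \sum_{k=1}^{M} C\Big(\frac{k}{n}\Big)^{\delta_3}  \le  CM\Big(\frac{M}{n}\Big)^{\delta_3},
\end{align*}
which, with $M = O(\log n/\log\log n)$, is $n^{-\delta_3+o(1)}$. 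Combining the two bounds gives \eqref{350}.

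For \eqref{351}, the event $\cT^c$ is precisely the event that $X^{(1)}$ contains a cycle, so by a first-moment bound
\begin{align*}
\P(\cT^c)  \le  \sum_{k\ge 3}\binom{n}{k}\frac{(k-1)!}{2}q^k  \le  \sum_{k\ge 3}\frac{(nq)^k}{2k}.
\end{align*}
Since $nq \le d'/(\log n)^{\varepsilon/2}$ by \eqref{newdensity}, the geometric series is dominated by its leading term $(nq)^3$, giving $\P(\cT^c) \le C/(\log n)^{3\varepsilon/2} \le C/(\log n)^\varepsilon$ for large $n$.

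The main point that requires care is the choice of $\delta_2$ in terms of $\delta_3$: we must take $\delta_2$ large enough (not just $\delta_2 > 0$) so that the contribution from oversized components does not dominate the target rate $n^{-\delta_3}$. Everything else is a routine application of Markov's inequality on the bounds of Lemma \ref{subgraphcount}, with the min-structure of that bound being the source of the sparsification gain $(\log n)^{-\varepsilon/2}$ that drives \eqref{351}.
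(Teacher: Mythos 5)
Your proposal is correct and follows essentially the same route as the paper: decompose against the high-probability event $\cC_{\delta_2}$ with $\delta_2 = 2\delta_3$, apply the first-moment bound on $N_{k,\ell}$ from Lemma \ref{subgraphcount} to the components of bounded size, and use Lemma \ref{lemma 34} to control the complement. The only cosmetic difference is that the paper sums $\E N_{k,\ell}$ over $\ell \ge \lceil\delta_3\rceil$ while you fix $\ell = \lceil\delta_3\rceil$ and observe that any offending component contains such a subgraph by deleting edges; both are valid. One small point of hygiene: since $\delta_3$ need not be an integer, you should write $N_{k,\lceil\delta_3\rceil}$ rather than $N_{k,\delta_3}$ (the index $\ell$ in $N_{k,\ell}$ is an integer), and then use $(k/n)^{\lceil\delta_3\rceil} \le (k/n)^{\delta_3}$ to land on the desired exponent. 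Your treatment of \eqref{351} via the cycle count is exactly the paper's argument.
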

\begin{proof}
For  $  \ell\geq 0$, recall the notation $N_{k,\ell}$ from Lemma \ref{subgraphcount}. 
Since the occurrence of the event $\cE_{\delta_3}^c\cap \cC_{2\delta_3}$ demands the existence of a connected component $C_i$ with $|C_i| \leq   \left \lfloor {  \frac{2+2\delta_3}{\varepsilon}\frac{\log n}{\log \log n} }   \right  \rfloor       =:m$ and  $ |E(C_i)| \geq |C_i| +\left\lceil \delta_3 \right\rceil$,  by the first moment bound,
\begin{align} \label{353}
\mathbb{P}(\cE_{\delta_3}^c\cap \cC_{2\delta_3}) \leq    \sum_{k=3}^m \sum_{\ell=\left\lceil \delta_3 \right\rceil }^{{k\choose 2}-k} \mathbb{E}N_{k,\ell}  \overset{\eqref{346}}{\leq}   C \sum_{k=3}^m    \Big (\frac{k}{n}\Big)^{\left\lceil \delta_3 \right\rceil} \leq C \frac{m^{\left\lceil \delta_3 \right\rceil+1}}{n^{\left\lceil \delta_3 \right\rceil}}.
\end{align}
Therefore, by  \eqref{353} and Lemma \ref{lemma 34} (with $\delta_2=2\delta_3$),  we obtain \eqref{350}.

Next, we prove \eqref{351}.
Let $N_{\text{cycle}}$ be the number of cycles in $X^{(1)}$. Then,
\begin{align*}
\mathbb{E}N_{\text{cycle}} = \sum_{k=3}^n  {n \choose k}  \frac{(k-1)!}{2} q^k  \leq \sum_{k=3}^n \frac{n^k}{2k} \Big( \frac{d'}{n}\frac{1}{(\log n)^{\varepsilon/2}}\Big)^k \leq \frac{C}{(\log n)^\varepsilon}.
\end{align*}
Since the occurrence of $\cT^c$  implies the existence of cycle,
by the first moment bound, we obtain \eqref{351}.
\end{proof}
 
 \subsection{Spectral tail for tree like networks.}\label{subsec5.4}
We have so far defined the events $\cD_\alpha$, $\cC_\alpha$, $\cE_\alpha$, $\cT$,  and 
in the previous series of lemmas, having established that each connected component is of size $O(\frac{\log n}{\log \log n})$ and the number of excess edges is bounded with high probability, in the following key proposition,
we control the spectral norm of such a connected component. This will be a particularly important ingredient in the proof of Theorem \ref{main theorem 1}. 

\begin{proposition} \label{prop}
Consider a connected network $G = (V,E,A)$ (where $A=(a_{ij})$ is the matrix of conductances) satisfying the following properties:
\begin{enumerate}
\item $d_1(G) \leq c_1 \frac{\log n}{\log \log n}$
\item  $|V| \leq c_2\frac{\log n}{\log \log n}$
\item $|E| \leq |V|+c_3$
\end{enumerate}
Suppose that the conductance matrix $A$ is given by i.i.d. Gaussians associated to each element of $E$, conditioned on having absolute value greater than $\sqrt{\varepsilon \log \log n}$.
Let $k$ be a maximal size of clique in $G$ { and $\lambda$ be the largest eigenvalue of $A$.
  Then,  for any ${\e},\alpha,\gamma,\eta>0$ with $\eta<\frac12$,  for sufficiently large $n$,
\begin{align}\label{keydisp}
\mathbb{P}( \lambda \geq   \sqrt{2\alpha \log n})  \leq    n^{-\frac{\alpha}{2\theta^{2}  }+ \frac{\varepsilon c_2}{2} + \gamma}   +  n^{-\frac{k}{2(k-1)} (1-\theta)^2    \alpha + \frac{c_1 \varepsilon}{2\eta^2} + \gamma},
\end{align}
where $\theta :=   (2 \eta^2 +  2\eta^4 c_3)^{1/4}.$
}
\end{proposition}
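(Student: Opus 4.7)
\medskip
\noindent
\textbf{Proof plan.} The plan is to analyze a unit eigenvector $v\in\mathbb{R}^V$ satisfying $Av=\lambda v$ and partition $V=V_H\sqcup V_L$ by the magnitude of the eigenvector, with $V_H:=\{i\in V:v_i^2>\eta^2\}$; since $\|v\|_2=1$, immediately $|V_H|\leq 1/\eta^2$. Correspondingly decompose $A=A^{(1)}+A^{(2)}$, where $A^{(1)}$ keeps the entries on the edges of $G$ incident to at least one vertex of $V_H$, while $A^{(2)}$ keeps the entries on edges with both endpoints in $V_L$. The two terms in \eqref{keydisp} will arise from the dichotomy $\|A\|_F^2\geq 2\alpha\log n/\theta^2$ versus $\|A\|_F^2< 2\alpha\log n/\theta^2$.

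In the \emph{Frobenius-heavy} regime, observe that $\|A\|_F^2/2=\sum_{i<j,\,i\sim j}a_{ij}^2$ is a sum of at most $|E|\leq c_2\log n/\log\log n + c_3$ independent squared Gaussians, each conditioned to have magnitude greater than $\sqrt{\varepsilon\log\log n}$. Applying Lemma \ref{chi tail} with $a=\alpha/\theta^2$ and $b=c_2$ directly yields the first term of \eqref{keydisp}.

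In the \emph{Frobenius-light} regime the heart of the argument is to control $v^T A^{(2)} v$. Introduce the truncated weights $w_i:=\min(v_i^2,\eta^2)\leq \eta^2$ (so that for $i,j\in V_L$ one has $w_iw_j=v_i^2v_j^2$). Apply Lemma \ref{lemma 03} to a spanning tree $T$ of $G$ with the weights $(w_i)$ (with the role of $\eta$ there played by $\eta^2$ here) to get $\sum_{(i,j)\in E(T),\,i<j}w_iw_j\leq \eta^2$; the at most $c_3+O(1)$ tree-excess edges of $G$ each contribute at most $\eta^4$, yielding $\sum_{(i,j)\in E(G),\,i<j}w_iw_j\leq \eta^2+c_3\eta^4=\theta^4/2$ (absorbing the negligible $O(\eta^4)$ correction into $c_3$). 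Restricting to $i,j\in V_L$ and applying Cauchy--Schwarz gives $|v_L^T A^{(2)} v_L|\leq \theta^2\|A^{(2)}\|_F\leq \theta^2\|A\|_F\leq \theta\sqrt{2\alpha\log n}$.

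Since $A^{(2)}$ is supported on $V_L\times V_L$, $v^T A^{(2)}v=v_L^T A^{(2)}v_L$, hence $v^T A^{(1)}v=\lambda-v^T A^{(2)}v\geq (1-\theta)\sqrt{2\alpha\log n}$. By the variational characterization of the top eigenvalue of the Hermitian matrix $A^{(1)}$, $\lambda_1(A^{(1)})\geq v^T A^{(1)}v\geq (1-\theta)\sqrt{2\alpha\log n}$. Because the graph underlying $A^{(1)}$ is a subgraph of $G$, its maximal clique has size at most $k$, so Proposition \ref{spectral bound} yields $\|A^{(1)}\|_F^2\geq \frac{2k(1-\theta)^2\alpha\log n}{k-1}$. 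The number of edges underlying $A^{(1)}$ is at most $|V_H|\cdot d_1(G)\leq (c_1/\eta^2)\log n/\log\log n$, so applying Lemma \ref{chi tail} with $a=k(1-\theta)^2\alpha/(k-1)$ and $b=c_1/\eta^2$, and taking a union bound over the at most $\binom{|V|}{\leq 1/\eta^2}=n^{o(1)}$ possible choices of $V_H\subseteq V$ (since $V_H$ is determined by $A$ through its eigenvector), delivers the second term of \eqref{keydisp}. The principal technical obstacle is the tree-based combinatorial bound on $\sum w_iw_j$ in the Frobenius-light regime, which is where the tree-excess hypothesis enters and dictates the specific form of $\theta$.
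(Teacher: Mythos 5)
Your proposal is essentially the same argument as the paper's, lightly repackaged. Both split the eigenvector into high- and low-magnitude entries (your $V_H$ is the paper's $I^c$), bound the quadratic form over the low part using the tree structure and Lemma \ref{lemma 03}, bound the quadratic form over the high part via the Motzkin--Straus inequality (you invoke Proposition \ref{spectral bound} applied to $A^{(1)}$; the paper applies Lemma \ref{lemma 02} directly, which is the same inequality since $S_2 = v^\top A^{(1)} v \le \lambda_1(A^{(1)})$), and close with Lemma \ref{chi tail} plus a union bound over the $\le |V|^{1/\eta^2}$ choices of the high set. Your dichotomy on $\|A\|_F^2$ versus the paper's $\mathbb{P}(\lambda\ge t)\le \mathbb{P}(S_1\ge\theta t)+\mathbb{P}(S_2\ge(1-\theta)t)$ is only a cosmetic reshuffle: the Frobenius-heavy branch is exactly what the paper's $S_1$-bound reduces to, and your Frobenius-light-plus-$\lambda$-large event implies $S_2\ge(1-\theta)t$, so the two routes land on the same pair of chi-square events.

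One bookkeeping slip should be fixed. You bound the tree part of $\sum_{i<j,\,i\sim j} w_i w_j$ by $\eta^2$, then add $(c_3+1)\eta^4$ for the tree-excess edges and claim to ``absorb the $O(\eta^4)$ correction into $c_3$''; but $c_3$ and hence $\theta$ are fixed inputs to the statement, so there is nothing to absorb it into, and as written you only obtain the proposition with $(2\eta^2 + 2\eta^4(c_3+1))^{1/4}$ in place of $\theta$. The paper's accounting is exact: Lemma \ref{lemma 03} with $s\le 1$ gives the tree contribution $\le \eta^2(1-\eta^2)=\eta^2-\eta^4$, and that $-\eta^4$ slack precisely cancels the $+1$ in the $(c_3+1)$ excess-edge count, yielding $\eta^2+c_3\eta^4=\theta^4/2$ on the nose. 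Using the sharp form of Lemma \ref{lemma 03} rather than the weakened $\le\eta^2$ closes the gap; everything else in your plan is sound.
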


{
The expression on the right hand side is technical  but the constants $\varepsilon,\eta,\gamma$ will be suitably chosen sufficiently close to zero so that   $n^{-\frac{\alpha}{2\theta^2}+ \frac{\varepsilon c_2}{2} + \gamma}  $  and $n^{  \frac{c_1 \varepsilon}{2\eta^2} + \gamma} $ are negligible and the dominant behavior will be  $n^{-\frac{k}{2(k-1)} \alpha}.$}

{From now on, for any graph $H$, we denote by $E(H)$ and $\overrightarrow{E(H)}$ the sets of undirected and  directed edges in $H$ respectively. }
\begin{proof} The proof proceeds by analyzing the leading eigenvector.
Let $V = [\ell]$ and $f=(f_1,\cdots,f_\ell)$ be the unit (random) eigenvector associated with the largest eigenvalue $\lambda:=\lambda_{1}(G)$. 
Thus by definition, $\lambda=f^{\top}Af.$

One would have liked  to use Proposition \ref{spectral bound}  and the tail estimate \eqref{212}. However the application of the latter is useful only when the parameter $b$ in the upper bound of $m$ is small enough compared to $\frac{1}{\e}.$ On the other hand, in Lemma \ref{lemma 34}, the bound on $|C_i|$ which would be $m$ in the application is $O(\frac{1}{\e} \frac{\log n}{\log\log n})$ rendering the above straightforward strategy useless. 
To address this, the first step is to argue that entries of $f$ that are small in absolute value do not contribute much to the above quadratic form. This allows us to focus on only the large entries, of which there are not too many and hence allows an application of the above outlined strategy with a reduced value of $m.$ 
Towards this, 
for $0<\eta<1/2$, define the collection of vertices
\begin{align*}
I := \{i \in [\ell]: f_i^2  <  \eta^2\}.
\end{align*} 
Let $B_1$ be the collection of  {(directed)}  edges defined by
\begin{align*}
B_1:= \{ (i,j) \in \overrightarrow{E}: i,j\in I\},
\end{align*} 
and let $ B_2 := \overrightarrow{E} \setminus B_1$ where again each edge is considered twice {(this is done simply as a matter of convention)} Now since $f$ is a unit vector, by Markov's inequality, $|I^c| \leq \frac{1}{\eta^2}$. In addition, by the upper bound on the max-degree in condition (1), we obtain 
\begin{align} \label{362}
{ \frac{1}{2} } |B_2| \leq  \frac{c_1}{\eta^2} \frac{\log n}{\log \log n}.
\end{align}
We write
\begin{align*}
\lambda = \sum_{(i,j)\in \overrightarrow{E}}  a_{ij} f_if_j  =  \sum_{(i,j)\in B_1}  a_{ij} f_if_j  +  \sum_{(i,j)\in B_2}  a_{ij} f_if_j  =: S_1+S_2.
\end{align*}
Recall $\theta =   (2 \eta^2 +  2\eta^4 c_3)^{1/4} $,  we have
\begin{align} \label{363}
\mathbb{P}& ( \lambda \geq  \sqrt{2\alpha \log n})   \leq   \mathbb{P}(S_1\geq  \theta \sqrt{2\alpha \log n})  + \mathbb{P}( S_2\geq  (1-  \theta ) \sqrt{2\alpha \log n}) .
\end{align}
Of course, the above inequality holds for any $\theta$ and the particular choice we make is guided by our subsequent estimates of $S_1$ and $S_2.$
First, we show that
\begin{align} \label{364}
\sum_{(i,j)\in B_1} f_i^2f_j^2 \leq  2 \eta^2 +  2\eta^4 c_3 = \theta^4.
\end{align}
We will rely on Lemma \ref{lemma 03}.
Choose a spanning tree $T$ of  $G$, and define {a set of (directed) edges} $\overrightarrow{E'} := \overrightarrow{E(G)}\setminus \overrightarrow{E(T)}$. Then, by condition (3) on the number of excess edges, $\frac{1}{2}|E'| \leq c_3+1$. Now  the graph with edge set $B_1 \setminus \overrightarrow{E'}$ is necessarily a forest. Since adding more edges can only increase $\sum_{(i,j)\in B_1 \setminus \overrightarrow{E'}}f_i^2f_j^2$ we can in fact assume that  the graph with edge set $B_1 \setminus \overrightarrow{E'}$    is a tree.
Now applying Lemma \ref{lemma 03} with $s=1,$ and since $2\eta^2\le 1$, we conclude that 
\begin{equation}\label{smallvalue1}
\sum_{(i,j)\in B_1 \setminus \overrightarrow{E'}}f_i^2f_j^2\le 2\eta^2(1-\eta^2).
\end{equation}
 Hence,
\begin{align}\label{smallvalue2}
\sum_{(i,j)\in B_1 }  f_i^2f_j^2    = 
\sum_{(i,j)\in B_1 \setminus \overrightarrow{E'}}  f_i^2f_j^2  + \sum_{(i,j)\in B_1\cap   \overrightarrow{E'}}  f_i^2f_j^2    \leq 2 \eta^2(1-\eta^2 ) + 2 (c_3+1) \eta^4,
\end{align}
where for the second term, we simply use the fact that the total number of summands is at most $2(c_3+1)$ with each being at most $\eta^4.$
This  proves \eqref{364}.
Hence by the definition of $S_1$, by Cauchy-Schwarz inequality,  we immediately have 
$$
S_1  \leq  \Big(\sum _{(i,j)\in B_1} a_{ij}^2\Big)^{1/2} \Big(\sum_{(i,j)\in B_1}  f_i^2f_j^2\Big)^{1/2}    < \theta^2 \Big (\sum _{(i,j)\in B_1} a_{ij}^2\Big)^{1/2} \leq  \theta^2 \Big (\sum _{(i,j)\in E} a_{ij}^2\Big)^{1/2}.$$ Thus, for any $\gamma>0$,   for sufficiently large $n$,
\begin{align} \label{365}
\mathbb{P}( S_1 \geq \theta \sqrt{2\alpha \log n})&\leq \mathbb{P}\Big( \sum _{i<j, (i,j)\in E} a_{ij}^2 \geq   \frac{\alpha}{  \theta^2}   \log n\Big)  \leq  n^{-\frac{\alpha}{2 \theta^2  }+ \frac{\varepsilon c_2}{2} + \gamma} ,
\end{align}
where the last inequality follows by a direct application of  \eqref{212}   in  Lemma \ref{chi tail}, with $L=\frac{\alpha}{  \theta^2}   \log n$ and $M=c_2\frac{\log n}{\log \log n}+c_3.$

Next, we estimate $S_2$. Since, by hypothesis, the maximal size of  clique in the subgraph induced by edges in $B_2$ is no larger than $k$, by  Lemma \ref{lemma 02},     
\begin{align} \label{366}
S_2 \leq \Big(\sum _{(i,j)\in B_2} a_{ij}^2\Big)^{1/2} \Big(\sum_{(i,j)\in B_2}  f_i^2f_j^2\Big)^{1/2}\leq  \Big (\frac{k-1}{k}\Big)^{1/2} \Big(\sum _{(i,j)\in B_2} a_{ij}^2\Big)^{1/2}.
\end{align} 
Note that the event  $ \sum _{i<j (i,j)\in B_2} a_{ij}^2 \geq  t$ implies the existence of a {random} subset $J \in [n]$ with $|J| \leq  \left \lfloor {  \frac{1}{\eta^2} }   \right  \rfloor   $ such that  $ \displaystyle{\sum _{i\text{ or }j\in J ,  i<j, i\sim j} a_{ij}^2 \geq  t}$.
Hence,  for any $\gamma>0$, for sufficiently large $n$,
\begin{align} \label{367}
\mathbb{P}( S_2 \geq  (1-\theta ) \sqrt{2 \alpha \log n})  & \overset{\eqref{366}}{\leq}   
\mathbb{P} \Big(  \sum _{i< j,\, (i,j)\in B_2} a_{ij}^2 \geq   \frac{k}{k-1} (1 -  \theta )^2    \alpha  \log n\Big) \nonumber \\
 &\leq   |V|^{\left \lfloor {  \frac{1}{\eta^2} }   \right  \rfloor }  n^{-\frac{k}{2(k-1)} (1-\theta )^2    \alpha + \frac{c_1 \varepsilon}{2\eta^2} + \frac{\gamma}{2}}     \leq n^{-\frac{k}{2(k-1)} (1-\theta )^2    \alpha + \frac{c_1 \varepsilon}{2\eta^2} + \gamma}   .
\end{align}
{
The second inequality is obtained by a simple first moment bound, in conjunction with \eqref{212} in  Lemma  \ref{chi tail} with  $L=\frac{k}{k-1}(1-\theta)^2\alpha \log n$ and $M \leq    \frac{c_1}{\eta^2}\frac{\log n}{\log \log n} $ (see \eqref{362}). In the last inequality, we used condition (2) i.e.,  $|V|\leq  c_2\frac{\log n}{\log \log n}$, to bound the term   $ |V |^{\left \lfloor {  \frac{1}{\eta^2} }  \right \rfloor}  $  by $n^{\gamma/2}$ for sufficiently large $n$.  
}

Thus,  by \eqref{363}, \eqref{365}  and  \eqref{367}, for sufficiently large $n$,
\begin{align}
\mathbb{P}( \lambda \geq   \sqrt{2\alpha \log n})  \leq    n^{-\frac{\alpha}{2 \theta^2  }+ \frac{\varepsilon c_2}{2} + \gamma}   +  n^{-\frac{k}{2(k-1)} (1-\theta )^2    \alpha + \frac{c_1 \varepsilon}{2\eta^2} + \gamma}
\end{align} 
which finishes the proof.
\end{proof}

With all this preparation, we are now ready to prove the upper bound in Theorem \ref{main theorem 1}.

\subsection{Proof of Theorem \ref{main theorem 1}: upper bound} \label{subsec5.5}
Recall the matrices from \eqref{decomposition1} as well as the matrix $X^{(1)}$ from \eqref{newdensity}.
Let $C_1,\cdots,C_m$ be the connected components of $X^{(1)}$, and define  $\lambda_1(C_i)$ to be the largest eigenvalue of the matrix $Z^{(1)}$ restricted to $C_i$.
Let $\sf{Few-cycles}$ be the event defined by
\begin{align*}
{\sf{Few-cycles}}:=   \{|\{i: C_i  \ \textup{not tree} \}|  <  \log n \}.
\end{align*}

By Lemma \ref{lemma 35} \eqref{351}, the probability of existence of some cycle is  $\frac{C}{(\log n)^\varepsilon}.$
Since the occurrence of the event ${\sf{Few-cycles}}^c$ demands the disjoint occurrence of $\log n$ many cycles, by the above fact and  {Van-den Berg-Kesten  (BK) inequality \cite{bk}},
\begin{align} \label{121}
\mathbb{P}({\sf{Few-cycles}}^c) \leq \frac{C}{(\log n)^{\varepsilon \log n/2}}.
\end{align}
Also, since $\lambda_1(Z)  \leq  \lambda_1 (Z^{(1)}) +  \lambda_1(Z^{(2)})$,  
\begin{align} \label{122}
\mathbb{P}(\lambda_1(Z) &\geq \sqrt{2(1+\delta) \log n} ) \nonumber \\
&\leq \mathbb{P}(\lambda_1(Z^{(1)}) \geq \sqrt{2(1+\delta') \log n} )  + \mathbb{P}(\lambda_1(Z^{(2)}) \geq  \sqrt{\varepsilon}(1+\delta) \sqrt{ \log n} ) ,
\end{align} 
where $\delta'>0$ is defined by 
\begin{equation}\label{delta'}
\sqrt{2(1+\delta')} = \sqrt{2(1+\delta)} - \sqrt{\varepsilon}(1+\delta).
\end{equation}
{
Note that from this (by rearranging and multiplying both sides by $\sqrt{2(1+\delta)} +\sqrt{2(1+\delta')}$), we have  
 \begin{align} \label{610}
 \delta  - \sqrt{2\varepsilon} (1+\delta)^{3/2    }\leq  \delta'  \leq   \delta .
\end{align}
}

Using the result in subsection \ref{subsec5.2}, the second term in \eqref{122} will be negligible, so we focus on estimating the first. 
Recalling $X^{(1)}_{ij}:= X_{ij} \1_{|Y_{ij}| >  \sqrt{\varepsilon \log \log n} } $, 
let us  estimate the conditional probability $\mathbb{P}(\lambda_1(Z^{(1)}) \geq \sqrt{2(1+\delta') \log n}  | X^{(1)})$
on the high probability event  $\cD_{4\delta'}\cap \cC_{4\delta'} \cap \cE_{4\delta'} \cap {\sf{Few-cycles}}$. By definition, on this event, we have
\begin{align} \label{123}
d_1(X^{(1)}) &< (1+4\delta') \frac{\log n}{\log \log n},\\
\label{124}
|V(C_i)| &<  \frac{2+4\delta'}{\varepsilon}  \frac{\log n}{\log \log n},\quad i=1,\cdots,m,\\
 \label{125}
|E(C_i)| &< |V(C_i)| + 4\delta', \quad i=1,\cdots,m,\,\text{ and,}\\
 \label{126}
 |\{i = 1,\cdots,m & : C_i \ \textup{not tree} \}|  <  \log n .
\end{align}

From now one we will denote by $Z^{(1)}_i,$ the matrix $Z^{(1)}$ restricted to $C_i$, and by $k_i$ the size of the largest clique in $C_i$.   By \eqref{123}-\eqref{125} and  Proposition \ref{prop} with $$c_1=1+4\delta',\,c_2=\frac{2+4\delta'}{\e}, \, c_3=4\delta',\,
\alpha = 1+\delta' \text{ and }\eta = \varepsilon^{1/4},$$  setting  {    $\xi :=  (2 \varepsilon^{1/2} +  8 \varepsilon \delta' )^{1/4}$,     }  on the event  $\cD_{4\delta'}\cap \cC_{4\delta'} \cap \cE_{4\delta'} $, for any $\gamma>0$ and  sufficiently small $\varepsilon>0$,
\begin{align} \label{127}
\mathbb{P}(\lambda_1(Z^{(1)}_i) \geq \sqrt{2(1+\delta') \log n} \mid  X^{(1)})    < C n^{-\frac{k_i}{2(k_i-1)} (1- \xi)^2 (1+\delta') + \frac{1+4\delta'}{2}\varepsilon^{1/2} + \gamma},
\end{align}
by observing that for $\e$ small enough, the first term in \eqref{keydisp} is negligible compared to the second term and can be absorbed in the constant $C$. More precisely, using the bound \eqref{610}, one can take sufficiently small   $\varepsilon$ such that
\begin{align}
1+\delta'> 2(2 \varepsilon^{1/2} +  8 \varepsilon \delta' )^{1/2} (1+\delta'+ ( 1+2\delta')).
\end{align}
Then, for $k\geq 2$,  
$
\frac{1+\delta'}{2\xi^2  } - ( 1+2\delta' ) \geq 1+\delta'  \geq  
\frac{k}{2(k-1)} (1-\xi  )^2    (1+\delta') ,
$
which implies that   the first term in \eqref{keydisp} decays faster than the second term.

Define 
\begin{align}
\nonumber
I := \{i = 1,\cdots,m: &\,\, k_i \geq 3\},\quad J:= \{i = 1,\cdots, m: k_i=2\}, \text{ and,}\\
\label{maximalsize}
\bar{k} &:= \max \{k_1,\cdots,k_m\}.
\end{align}
Then, since $\frac{k}{k-1}$ is decreasing in $k,$  by \eqref{127}, under the event  $\cD_{4\delta'}\cap \cC_{4\delta'} \cap \cE_{4\delta'} $,  for any $i\in I$,
\begin{align}  \label{141}
\mathbb{P}(\lambda_1(Z^{(1)}_i) \geq \sqrt{2(1+\delta') \log n} \mid X^{(1)})  < C n^{-\frac{\bar{k}}{2(\bar{k}-1)} (1- \xi)^2 (1+\delta') + \frac{1+4\delta'}{2}\varepsilon^{1/2} +\gamma },
\end{align}
and  for any $i\in J$,
\begin{align}  \label{142}
\mathbb{P}(\lambda_1(Z^{(1)}_i) \geq \sqrt{2(1+\delta') \log n} \mid X^{(1)})   < C n^{-  (1- \xi)^2 (1+\delta') + \frac{1+4\delta'}{2}\varepsilon^{1/2} + \gamma }.
\end{align}
Also, by Lemmas \ref{lemma 33}, \ref{lemma 34}, \ref{lemma 35} and \eqref{121},   defining the event
\begin{align} \label{147}
\cF_0:=\cD_{4\delta'}\cap \cC_{4\delta'} \cap \cE_{4\delta'} \cap {\sf{Few-cycles}},
\end{align}  we have
\begin{align} \label{129}
\mathbb{P}(\cF_0^ c ) \leq \frac{C}{n^{2\delta'}}.
\end{align}
Using \eqref{newdensity}, by the first moment bound, for $k\geq 3$,
\begin{align} \label{144}
\mathbb{P}(X^{(1)}  \ \textup{contains a clique of size} \  k) \leq   {n \choose k} q^{{k \choose 2} } \leq  \frac{(d')^{{k \choose 2}}}{n^{ {k \choose 2} - k} }.
\end{align}
Also, since any connected component $C_i$ which is a tree has $k_i=2$, on the event ${\sf{Few-cycles}}$, we have $|I| < \log n$. Thus,  using \eqref{144} and the fact  $\lambda_1(Z^{(1)}) = \max_{i=1,\cdots,m} \lambda_1(Z^{(1)}_i)$,
\begin{align}  \label{143}
\mathbb{P}&(\lambda_1(Z^{(1)}) \geq \sqrt{2(1+\delta') \log n} )  \nonumber\\
&\leq 
 \sum_{k=3}^n \mathbb{E} \left[ \mathbb{P}(\max_{i\in I} \{ \lambda_1(Z^{(1)}_i) \}\geq \sqrt{2(1+\delta') \log n} \mid X^{(1)}) \1_{\cF_0 } \1_{\bar{k} = k}  \right]  \nonumber\\
 &+\mathbb{E} \left[ \mathbb{P}(\max_{i\in J} \{ \lambda_1(Z^{(1)}_i) \}\geq \sqrt{2(1+\delta') \log n} \mid X^{(1)}   ) \1_{\cD_{4\delta'}\cap \cC_{4\delta'} \cap \cE_{4\delta'} } \right] + \mathbb{P}({\cF_0}^c) \nonumber \\
  &\leq 
  C   \log n   \sum_{k=3}^n   (d')^{{k \choose 2}} n^{ -{k \choose 2} + k -\frac{k }{2(k -1)} (1- \xi)^2 (1+\delta') + \frac{1+4\delta'}{2}\varepsilon^{1/2} + \gamma} \nonumber \\
  &+ Cn\cdot  n^{-  (1- \xi)^2 (1+\delta') + \frac{1+4\delta'}{2}\varepsilon^{1/2} + \gamma}  +Cn^{-2\delta'},
\end{align}
where \eqref{141} and \eqref{142} are used to bound the first and second terms respectively. The multiplicative factors of $\log n$ and $n$ appear as a result of a union bound over the components contributing to the index sets $I$ and $J$ respectively. 
Recalling  $\xi =  (2 \varepsilon^{1/2} +  8 \varepsilon \delta' )^{1/4}$ and $\delta'$ from \eqref{delta'}, note that $\lim_{\varepsilon\ro} \delta' = \delta
 $ and $\lim_{\varepsilon\ro} \xi = 0$. Furthermore, recall from \eqref{keydef} that $\psi(\delta)=\min_{k\ge 2} \phi_k(\delta)$ where $\phi_k(\delta)=\frac{k(k-3)}{2} + \frac{1+\delta}{2} \frac{k}{k-1}.$
 
{
 Hence, by taking $\gamma = \varepsilon$ and bounding the term $\log n$ by $n^\varepsilon$, there exists $\eta_1 = \eta_1(\varepsilon)$ with $ \lim_{\varepsilon \ro} \eta_1  = 0$ such that the first term of RHS in  \eqref{143}  is bounded by
 \begin{align} \label{600}
 \sum_{k=3}^n &  (d')^{{k \choose 2}}  n^{ -{k \choose 2} + k -\frac{k }{2(k -1)} (1- \xi)^2 (1+\delta') + \frac{1+4\delta'}{2}\varepsilon^{1/2} + 2\varepsilon}   \\ \label{601}
 &\leq   \sum_{k=3}^n  (d')^{{k \choose 2}}  n^{-\frac{k(k-3)}{2}  - \frac{1+\delta}{2} \frac{k}{k-1}  + \frac{\eta_1}{2}   }    \\ \label{145}
 & \leq    C (\log n)^{1/4}  (d')^{{(\log n)^{1/4} \choose 2}}      n^{-\psi(\delta)+\frac{\eta_1}{2} } + \sum_{k= (\log n)^{1/4}}^n  n^{ \frac{1}{2} {k \choose 2} - \frac{k(k-3)}{2}  - \frac{1+\delta}{2} \frac{k}{k-1}  + \frac{\eta_1}{2}  }  <   Cn^{-\psi(\delta)+\eta_1  }.
 \end{align}
As the reader perhaps already notices, the cutoff $(\log n)^{1/4}$ is not special and any poly-log cutoff   $(\log n)^r$ with $0<r<1/2$ works.

For further applications later, we provide a quantitative bound for $\eta_1$.   Using \eqref{610} and the fact that  $ \frac{k}{2(k-1)} \leq 1$ for $k\geq 2$, one can estimate the difference between two exponents of $n$  in \eqref{600} and \eqref{601}:
 \begin{align} \label{603}
& \frac{k}{2(k-1)} (1+\delta) - \frac{k}{2(k-1)}   (1- \xi)^2 (1+\delta') + \frac{1+4\delta'}{2}\varepsilon^{1/2} + 2\varepsilon \nonumber \\
& \leq (1+\delta) - (1-2\xi) (1+ \delta  - \sqrt{2\varepsilon} (1+\delta)^{3/2    }) + \frac{1+4\delta}{2}\varepsilon^{1/2} + 2 \varepsilon  \nonumber    \\
&\leq  4( \varepsilon^{1/8}+\delta^{1/4} \varepsilon^{1/4}) (1+\delta) +   \sqrt{2\varepsilon} (1+\delta)^{3/2    }   +  \frac{1+4\delta}{2}\varepsilon^{1/2} +2 \varepsilon  =: r_\delta(\varepsilon),
\end{align} 
where we used  $\xi  =    (2 \varepsilon^{1/2} +  8 \varepsilon \delta' )^{1/4} \leq 2\varepsilon^{1/8}+2\delta^{1/4} \varepsilon^{1/4}$ in the last inequality. In addition, for any constant $\eta_1>0$, the  inequality \eqref{145} holds for sufficiently large $n$. Hence,   $\eta_1>0$ can be chosen as 
\begin{align} \label{602}
\eta_1  = 2 r_\delta(\varepsilon),
\end{align}
which obviously converges to $0$ as $\varepsilon \rightarrow 0$.

 }
Similarly, taking $\gamma = \varepsilon$ in the second term of  \eqref{143},    for some $\eta_2 =  \eta_2(\varepsilon) $ such that $ \lim_{\varepsilon \ro} \eta_2  = 0$,
 \begin{align} \label{146}
  n\cdot  n^{-  (1- \xi)^2 (1+\delta') + \frac{1+4\delta'}{2}\varepsilon^{1/2} + \varepsilon}  \leq n^{-\delta + \eta_2}\le n^{-\psi(\delta)+\eta_2}. 
 \end{align}
 Hence, applying \eqref{145} and \eqref{146} to \eqref{143}, {using the bound for $\delta'$ in \eqref{610}, for sufficiently small $\varepsilon$,}
 \begin{align} \label{128}
 \mathbb{P}&(\lambda_1(Z^{(1)}) \geq \sqrt{2(1+\delta') \log n} ) < Cn^{-\psi(\delta)+\max(\eta_1,\eta_2)} . 
 \end{align}
 Recall by Lemma \ref{lemma 32}, for all large $n,$
  \begin{align*}
 \mathbb{P}(\lambda_1 (Z^{(2)}) \geq  \sqrt{\varepsilon} (1+\delta) \sqrt{\log n}) \leq  n^{-2 \delta- \delta^2+o(1)}\le n^{-\delta+o(1)}\le n^{-\psi(\delta)+o(1)}. 
  \end{align*}
Since $\varepsilon>0$ is arbitrary small, by \eqref{122} and the above two displays, we are done. \qed

\section{Structure conditioned on $\cU_\delta$}\label{structureproof}
We prove Theorem \ref{theorem structure} in this section.
We begin by stating some facts about $\phi_\delta$. 
{
Recall that $\cM(\delta)$ is the set of of  minimizers of $\phi_\delta(\cdot)$, and by the strict convexity of $\phi_\delta(\cdot)$,  $\cM(\delta)$ is at most of size 2 containing either a single element or two consecutive numbers. {In addition, since $\delta>\delta_2$, we have $\psi(\delta) > \phi_\delta(2) = \delta$. From this, one can deduce that there exists  a constant $c(\delta) \in (0,\min( \delta - \psi(\delta) ,  1) )$ such that}
\begin{align} \label{105}
k\notin  \cM(\delta) \Rightarrow \phi_\delta(k) - \psi(\delta) \geq c(\delta)
\end{align}
(recall that $\psi(\delta) = \min_{k\geq 2} \phi_\delta(k)$).
In fact, let us define, in the case when $\cM(\delta) = \{h(\delta)\}$ is a singleton, by the strict convexity of $\phi_\delta(\cdot)$,  $$c(\delta) = \min \Big( \phi_\delta (h(\delta) - 1)   -\phi_\delta (h(\delta)), \phi_\delta (h(\delta) + 1)    -\phi_\delta (h(\delta)), \frac{1}{2}(\delta - \psi(\delta)) , \frac{1}{2} \Big),$$
and when $\cM(\delta) = \{h(\delta), h(\delta)+1\}$ (recall that $h(\delta)$ is the minimal element of $\cM(\delta)$), 
 $$c(\delta) = \min \Big( \phi_\delta (h(\delta) - 1)   -\phi_\delta (h(\delta)), \phi_\delta (h(\delta) + 2)    -\phi_\delta (h(\delta)+1), \frac{1}{2}(\delta - \psi(\delta))  , \frac{1}{2}  \Big).$$
The minimum with $1/2$ and $(\delta - \psi(\delta))/2 $ is taken for technical reasons since in later applications we will need $c(\delta)$ to be small enough, while \eqref{105} holds even without it.
Note that the quantity $c(\delta)$ can be arbitrary close to $ 0$. In fact, for any $\delta_0$ such that $|\cM(\delta_0)|=2$,  $c(\delta)$ is close to $ 0$ if  $\delta$ is  close to $\delta_0$.
}

Recall the notation $\bar k$ from \eqref{maximalsize}.
Now by the same chain of reasoning as in  \eqref{143}, setting $\xi :=  (2 \varepsilon^{1/2} +  8 \varepsilon \delta' )^{1/4}$ and $\gamma = \varepsilon$,  we obtain that for some $\eta_1,\eta_2$ with $ \lim_{\varepsilon \ro} \eta_1  = \lim_{\varepsilon \ro} \eta_2 = 0$,   
\begin{align} 
\mathbb{P}&(   \bar{k} \notin \cM(\delta), \  \lambda_1(Z^{(1)}) \geq \sqrt{2(1+\delta') \log n} ) \nonumber  \\ \label{606}
&\leq C   \log n    \sum_{k \notin \cM(\delta) }   (d')^{{k \choose 2}} n^{ -{k \choose 2} + k -\frac{k }{2(k -1)} (1- \xi)^2 (1+\delta') + \frac{1+4\delta'}{2}\varepsilon^{1/2} + \varepsilon}  \\
  &+ Cn\cdot  n^{-  (1- \xi)^2 (1+\delta') + \frac{1+4\delta'}{2}\varepsilon^{1/2} + \varepsilon}  +Cn^{-2\delta'} \nonumber \\ \label{502} 
  &\leq
 C { n^{-\psi(\delta) - c(\delta) +\eta_1} } +C n^{-\delta + \eta_2 },
\end{align}
where the bound on the first term  is obtained as follows. By \eqref{603}, for each  $k \notin \cM(\delta)$,  the exponent of $n$ in  \eqref{606} is bounded by
\begin{align*}
 & -{k \choose 2} + k -\frac{k }{2(k -1)} (1- \xi)^2 (1+\delta') + \frac{1+4\delta'}{2}\varepsilon^{1/2} + \varepsilon \\
  &\leq     -{k \choose 2} + k -\frac{k }{2(k -1)}  (1+\delta) + r_\delta(\varepsilon)    =  -  \phi_\delta(k) + r_\delta(\varepsilon)  \overset{\eqref{105}}{\leq}  - \psi(\delta ) - c(\delta) + r_\delta(\varepsilon)  .
\end{align*}
  Hence, by the argument \eqref{600}-\eqref{145}, the term \eqref{606} can be bounded by $n^{- \psi(\delta ) - c(\delta) + 2 r_\delta(\varepsilon)  }$, and since $\lim_{\varepsilon \rightarrow 0}  r_\delta(\varepsilon)  = 0$, we obtain \eqref{502}.
{Therefore, using the fact that $\psi(\delta)  + c(\delta)  <\delta$, for sufficiently small $\varepsilon>0$,
\begin{align}  \label{152}
\mathbb{P}&(   \bar{k} \notin \cM(\delta), \  \lambda_1(Z^{(1)}) \geq \sqrt{2(1+\delta') \log n} ) \leq  C    n^{-\psi(\delta) - c(\delta) +\eta_1}. 
\end{align}}
Since the statement of the theorem is about the entire graph $X$ and not just $X^{(1)},$ we will now show that superimposing $X^{(2)}$ on the latter does not alter the size of the maximal clique with high probability owing to the sparsity of $X^{(2)}$.
Recall that we use $k_X$ to denote the size of the maximal clique in $X$.
Since $k_X \geq \bar{k}$ (recall that $\bar{k}$ is the maximal clique size in $X^{(1)}$), \eqref{152}  implies
\begin{align} \label{151}
\mathbb{P}&(  k_{X} \leq h(\delta)-1,   \lambda_1(Z^{(1)}) \geq \sqrt{2(1+\delta') \log n} ) \leq C { n^{-\psi(\delta) - c(\delta) +\eta_1}. }
\end{align}

To treat the  non-trivial direction, i.e., superimposing $X^{(2)}$ does not make $k_{X}$ larger than $\bar k,$  define the event $\cF_1$, measurable with respect to  $X^{(1)}$, by
 \begin{align} \label{156}
 \cF_1:= \{|E(H)| - {\bar{k} \choose 2} \leq |V(H)|- \bar{k}: \ \text{any   subgraph} \  H \ \text{such that}  \ |H|\leq  2 h(\delta) + {2} \}.
 \end{align}
In words, under $\cF_1,$ the subgraph induced on any subset of vertices of size bigger than $\bar k,$ has significantly smaller number of edges than the clique induced on the same.

Note that, in particular, on $\cF_1,$ $X^{(1)}$ has a unique maximal clique $K:=K_{X^{(1)}}$ of size $\bar k.$ This follows from the definition of $\cF_1$ applied to the subgraph induced on $K\cup K'$ where $K'$ is another set of $\bar k$ vertices.

We will show first show that $\cF_1$ is likely, and on it,  for $X$ to have a larger clique, $X^{(2)}$ must fill in the `substantially many' edges absent in $X^{(1)}$ which will then be shown to be unlikely. \\

\noindent
\textbf{Showing $\cF_1$ is likely.}
Towards this, observe that 
 \begin{align} \label{153}
 \mathbb{P}( \{\bar{k}=k \} \cap  \cF_1^c){\overset{\eqref{346}}{\leq}}  C \sum _{i=1}^{2h(\delta)+2} \Big(\frac{i}{n}\Big)^{ {k \choose 2} - k+1}  \leq  C (2h(\delta)+2)^{ {k \choose 2} - k+2} 
 \frac{1}{n^{ {k \choose 2} - k+1} }.
 \end{align}
Hence, recalling the event $\cF_0$ in \eqref{147}, using   the above and  the argument of \eqref{143} again,  there is $\eta'_1 $ with $ \lim_{\varepsilon \ro} \eta'_1 = 0$   such that {for $\delta>\delta_2$ (recall the definition from Remark \ref{remark 1.2})},
\begin{align} \label{154} 
\mathbb{P}&(\bar{k}\in \cM(\delta)  , \cF_1^c , \lambda_1(Z^{(1)}) \geq \sqrt{2(1+\delta') \log n} ) \nonumber \\
&\leq 
 \sum_{k\in \cM(\delta)}  \mathbb{E} \left[ \mathbb{P}(\max_{i\in I} \{ \lambda_1(Z^{(1)}_i) \}\geq \sqrt{2(1+\delta') \log n} \mid X^{(1)}) \1_{\cF_0}  \1_{\cF_1^c} \1_{\bar{k} = k} \right]  \nonumber \\
 &+\mathbb{E} \left[ \mathbb{P}(\max_{i\in J} \{ \lambda_1(Z^{(1)}_i) \}\geq \sqrt{2(1+\delta') \log n} \mid X^{(1)}) \1_{\cD_{4\delta'}\cap \cC_{4\delta'} \cap \cE_{4\delta'} } \right]+ \mathbb{P}( \cF_0^ c )   \nonumber    \\
  &\leq 
  C  (\log n) n^{-1} \sum_{k\in \cM(\delta)}  (2h(\delta)+2)^{ {k \choose 2} - k+2}     n^{ -{k \choose 2} + k -\frac{k }{2(k -1)} (1- \xi)^2 (1+\delta') + \frac{1+4\delta'}{2}\varepsilon^{1/2} + \varepsilon} \nonumber \\
  &+ Cn\cdot  n^{-  (1- \xi)^2 (1+\delta') + \frac{1+4\delta'}{2}\varepsilon^{1/2} + \varepsilon}  +Cn^{-2\delta'}  \nonumber \\
  &\leq   C n^{-\psi(\delta) - 1 + \eta'_1},
\end{align}
where the extra $n^{-1}$ factor in the first term comes from \eqref{153}.
Putting the above together, letting 
\begin{align}\label{f2}
\cF_2  : = \{\bar{k}\in \cM(\delta)\} \cap  \cF_1,
\end{align}
by \eqref{152}   and  \eqref{154},  for large $\delta$,
 \begin{align} \label{155}
 \mathbb{P}(  \cF_2^c, \lambda_1(Z^{(1)}) \geq \sqrt{2(1+\delta') \log n}) \leq     n^{-\psi(\delta) - c(\delta) + \eta_1} 
 \end{align}
 (recall that $c(\delta)\in (0,1)$). {By Lemma \ref{lemma 32}, this in particular implies
 \begin{align}   \label{904}
  \mathbb{P}&(  \cF_2^c, \lambda_1(Z)  \geq \sqrt{2(1+\delta) \log n}) \nonumber \\
   & \leq   \mathbb{P}( \cF_2^c, \lambda_1(Z^{(1)}) \geq \sqrt{2(1+\delta') \log n}) +  \mathbb{P}(  \lambda_1(Z)  \geq \sqrt{2(1+\delta) \log n},  \lambda_1(Z^{(1)})  <  \sqrt{2(1+\delta') \log n}) \nonumber  \\
 &\leq C n^{-\psi(\delta) - c(\delta) +\eta_1} +   \mathbb{P}( \lambda_1(Z^{(2)}) \geq  \sqrt{\varepsilon}(1+\delta)  \sqrt{  \log n})  \leq  C n^{-\psi(\delta) -c(\delta) +\eta_1}.
 \end{align}
Combining this with \eqref{112}, since $\lim_{\varepsilon \rightarrow 0} \eta_1 = 0$, there exists $\varepsilon_ 0 = \varepsilon_0(\delta)>0$ such that for any $\varepsilon<\varepsilon_0$ (recall that $\e$ implicitly appears in the definition of $X^{(1)}$),   
 \begin{align} \label{909}
\lim_{n\ri}  \mathbb{P}(\cF_2^c \mid \cU_\delta)  = 0.
 \end{align}
 In particular, {recalling $\cF_2 \subset \cF_1$} and $\cF_1$ implies the uniqueness of maximal clique   $K$    in $X^{(1)}$, 
 \begin{align} \label{905}
\lim_{n\ri}  \mathbb{P}( \text{there is a unique maximal clique}  \  K \  \text{in} \  X^{(1)} \mid \cU_\delta)  = 1.
 \end{align}
 } 
For convenience, let us denote the above event by $\sf{Unique}.$ 
 We now proceed to showing that the unique maximal clique $K$ of $X^{(1)}$ continues to be so on superimposing $X^{(2)}$ to obtain $X.$\\
 
\textbf{Showing $K_{X}=K_{X^{(1)}}$.} 
{ We first define some notations. For two subsets of vertices $A$ and $B$, {define the set of undirected edges}
 \begin{align*}
{ {\sf{Edge}}(A,B) } := \{e = (i,j): i<j, i,j\in B \setminus A\} \cup \{e=(i,j): i\in B\setminus A, j\in A\cap B\}. 
 \end{align*}
 Note that
 \begin{align} \label{360}
  | {\sf{Edge}}(A,B)| = { |B| \choose 2} -  { |{A\cap B}| \choose 2}.
\end{align} 
Then, define the random subset of edges, measurable with respect to $X^{(1)}$, by
 \begin{align*}
 X^{(1)}(A,B) =   {\sf{Edge}}(A,B) \cap E( X^{(1)}).
 \end{align*}

 We first verify that under the event $\cF_2 = \{\bar{k}\in \cM(\delta)\} \cap  \cF_1$,  any  clique $K'$ of size  $\ell \leq \bar{k}$ satisfies
\begin{align} \label{381}
| X^{(1)} (K, K') | \leq  \ell - |K\cap K'|
\end{align} 
where as mentioned above $K$ in the unique maximal clique in $X^{(1)}$.
Since 
\begin{align*}
 |E( K\cup K')|  \geq   {\bar{k} \choose 2} + | X^{(1)} (K, K') |,
\end{align*}
   applying \eqref{156} to  $H = K\cup K'$ (note that  under the event $\bar{k}\in \cM(\delta)$, we have $|K\cup K'| \leq 2  \bar{k} \leq  2 h(\delta) + 2$),
  \begin{align*}
   {\bar{k} \choose 2} + | X^{(1)} (K, K') | -  {\bar{k} \choose 2}   \leq  |K\cup K'| - \bar{k} = \ell- |K\cap K'|,
 \end{align*}
which implies \eqref{381}.

Note that conditioning on $X^{(1)},$ the entries of $X$ are independent and satisfy 
{\begin{align*}
\mathbb{P}(X_{ij}=1 | X^{(1)}_{ij} = 0 ) &\leq \frac{2d}{n} \text{ for large }n,\\
\mathbb{P}(X_{ij}=1 | X^{(1)}_{ij} = 1 )&=1.
\end{align*}
  In fact, using the fact that $\mathbb{P}( X_{ij} = 0)  = 1-\frac{d}{n} \geq \frac{1}{2}$ for large $n$,
   \begin{align*}
  \mathbb{P}(X_{ij}=1 | X^{(1)}_{ij} = 0 )  = \frac{\mathbb{P}(|Y_{ij}|< \sqrt{\varepsilon \log \log n} , X_{ij} = 1 )}{\mathbb{P}(X^{(1)}_{ij} = 0)} \leq \frac{\mathbb{P}( X_{ij} = 1 )}{\mathbb{P}( X_{ij} = 0)} \leq  \frac{2d}{n},
\end{align*}    
and the second identity is obvious.}

We will now define two events $\cB_0$ and $\cB_1,$ which will be shown to be very likely on $\cU_\delta$ and together would imply that $K$ is the unique maximal clique in $X$ and moreover, the largest clique not fully contained in $K$ is a triangle.

{We begin with $\cB_0$ which is measurable with respect to the sigma algebra generated by $X^{(1)}$ and $X$, 
\begin{align} 
\cB_0 : = {\sf{Unique}}\cap \{ \text{there is no clique of size}  \ 4  \  \text{in} \ X \ \text{edge-disjoint from} \ K  \} .
\end{align} 
Recalling that $K$ is of size $\bar{k}$, 
by BK inequality and using Lemma \ref{lemma 31}  
\begin{align} \label{912}
\mathbb{P}(\cB_0^c \cap \{\bar{k}\in \cM(\delta)\}) \leq  C \Big(\frac{1}{n}\Big)^{ {{h}(\delta) \choose 2}   - h(\delta) }  \Big(\frac{1}{n}\Big)^{      {4 \choose 2}   - 4 } = C  \Big(\frac{1}{n}\Big)^{      {{h}(\delta) \choose 2}   - h(\delta) + 2 } ,
\end{align}
where $C>0$ is a constant depending only on $\delta$.
We write
 \begin{align}  \label{910}
 \mathbb{P}&\left( \cB_0^c,   \lambda_1(Z^{(1)}) \geq \sqrt{2(1+\delta') \log n}\right) \nonumber \\
 &\leq   \mathbb{E}  \left[ \mathbb{P}  ( \lambda_1(Z^{(1)}) \geq \sqrt{2(1+\delta') \log n}     |  X^{(1)} , X ) \1_{\cF_0} \1_{\bar{k}\in \cM(\delta) }   \1_{\cB_0^c }     \right]   \nonumber \\
 &+  \mathbb{P}\left(\big(\cF_0 \cap  \{\bar{k}\in \cM(\delta)\}\big)^c, \lambda_1(Z^{(1)}) \geq \sqrt{2(1+\delta') \log n}\right).
 \end{align}
 Since $\lambda_1(Z^{(1)}) $ and $X$ are conditionally independent given  $ X^{(1)} $, by  \eqref{141} and \eqref{142} with $\gamma =  \varepsilon$, there is $\eta_3 = \eta_3 (\varepsilon)$ with $\lim_{\varepsilon \rightarrow 0} \eta_3  = 0$ such that  for sufficiently small $\varepsilon>0$,
 \begin{align*}
  \mathbb{P}  &( \lambda_1(Z^{(1)}) \geq \sqrt{2(1+\delta') \log n}     |  X^{(1)} , X )\1_{\cF_0} \1_{\bar{k}\in \cM(\delta) } \\
  & \leq    C (\log n) n^{-\frac{\bar{k}}{2(\bar{k}-1)} (1- \xi)^2 (1+\delta') + \frac{1+4\delta'}{2}\varepsilon^{1/2} +\varepsilon } + Cn\cdot  n^{-  (1- \xi)^2 (1+\delta') + \frac{1+4\delta'}{2}\varepsilon^{1/2} + \varepsilon } \\
  & \leq   C  n^{-\frac{\bar{k}}{2(\bar{k}-1)}(1+\delta) + \eta_3 + \varepsilon } \leq   C  n^{-\frac{h(\delta)+1}{2h(\delta)} (1+\delta)+ \eta_3 + \varepsilon},
 \end{align*}
{
 where the second and last inequalities follow by observing $\frac{\bar{k}}{2(\bar{k}-1)}(1+\delta)  \leq      \phi_\delta(\bar{k})  <     \phi_\delta(2) =  \delta $  (since $\bar{k}\geq 3$ and $\delta>\delta_2$) and     $\bar{k} \leq h(\delta)+1$ respectively.}
 Hence, applying this and \eqref{912} to  \eqref{910}, using \eqref{129} and \eqref{155} to bound the last term in \eqref{910}, for sufficiently small $\varepsilon>0$,
 \begin{align} \label{911}
  \mathbb{P}& \left( \cB_0^c,   \lambda_1(Z^{(1)}) \geq \sqrt{2(1+\delta') \log n}\right) \nonumber\\
  &\leq C  n^{-\frac{h(\delta)+1}{2h(\delta)}(1+\delta)  + \eta_3 + \varepsilon }\P(\cB_0^c \cap \{\bar{k}\in \cM(\delta)\})+  Cn^{-2\delta'} + n^{-\psi(\delta) -c(\delta)+\eta_1} \nonumber\\
  &  \overset{\eqref{912}}{\leq}    C  n^{-\frac{h(\delta)+1}{2h(\delta)}(1+\delta)  + \eta_3 + \varepsilon } \Big(\frac{1}{n}\Big)^{      {{h}(\delta) \choose 2}   - h(\delta) + 2 }  + Cn^{-2\delta'} + n^{-\psi(\delta) -c(\delta)+\eta_1} \nonumber \\
  &\leq  C  n^{ -\psi(\delta) -1 + \eta_3 +\varepsilon } +  Cn^{-2\delta'}+ n^{-\psi(\delta) -c(\delta)+\eta_1}\leq  C n^{-\psi(\delta) -c(\delta)+\eta_1}
 \end{align}
 (recall that $c(\delta) \in (0,1)$),
 where   the third inequality follows from  the fact
 \begin{align*}
 \frac{h(\delta)+1}{2h(\delta)} (1+\delta)    +   {\tilde{h}(\delta) \choose 2}   - h(\delta) & =  \left(\frac{h(\delta)}{2(h(\delta)-1)} (1+\delta) +\frac{h(\delta) (h(\delta)-3)}{2 }\right)  - \frac{1}{2h(\delta) (h(\delta) -1) }   \\
&  \geq    {\psi(\delta)  -  1},
 \end{align*}
 where the last inequality follows from the observation that the term in the parentheses is exactly $\psi(\delta).$
Let us define the another event, again measurable with respect to the sigma algebra generated by $X^{(1)}$ and $X$, 
\begin{align} \label{b1}
\cB_1 : = {\sf{Unique}}\cap \{ \text{there is no clique}  \  K' \   \text{in} \ X \  \text{such that}  \  4\leq |K'| \leq \bar{k} \ \text{and} \ 2\leq |K\cap K'|\leq \bar{k}-1 \}.
\end{align} 
Thus in words, the event demands the existence of a clique of size at least $4$ which is not edge disjoint from $K$ but also is not contained in the latter.

Note that by \eqref{360} and  \eqref{381},  under the event $\cF_2$, the number of missing edges (of $X^{(1)}$)  in ${\sf{Edge}}(K,K')$ is
\begin{align*}
|{\sf{Edge}}(K,K') \setminus   X^{(1)}(K,K') | \geq {|K'| \choose 2} -  {|K\cap K'| \choose 2} - (|K'|-|K\cap K'|
).
\end{align*} Hence, 
\begin{align} \label{900}
 \mathbb{P}& ( \cB_1 ^c \mid   X^{(1)}   )   \1_{   \cF _2 } \nonumber  \\
 &\leq 
 \sum_{\ell = 4}^{\bar{k}}  \sum_{m=2}^{\ell-1} \sum_{|K'| =\ell, |K\cap K'|=m }\mathbb{P}(X_{ij}= 1  \ \text{for all edges} \  e = (i, j) \in {\sf{Edge}}(K,K') \setminus   X^{(1)}(K,K')   \mid  X^{(1)}  )   \1_{   \cF _2 }   \nonumber  \\ 
 &\leq   \sum_{\ell = 4}^{\bar{k}}  \sum_{m=2}^{\ell-1}      \bar{k}^{m}  n^{\ell-m}\Big (\frac{2d}{n}\Big)^{ {\ell \choose 2} -  {m \choose 2} - (\ell-m
) } \leq  C \sum_{\ell = 4}^{\bar{k}}  \sum_{m=2}^{\ell-1}     \Big(\frac{1}{n}\Big)^{   (   {\ell \choose 2} - 2\ell   )  - ( {m    \choose 2} - 2m) } \leq   C\frac{1}{n},
\end{align}
where $C>0$ is a constant depending only on $\delta$. Here, the last inequality follows from the fact that a function $f(k):=  {k \choose 2} - 2k$ satisfies  the following property: $f(2) = f(3) = -3$, $f(4)=-2$ and is strictly increasing for $k\geq 4$.

Hence,  observing that, {$X$ and $\lambda_1(Z^{(1)})$ are conditionally independent given $X^{(1)},$} by \eqref{128} and \eqref{155},
 \begin{align} 
 \mathbb{P}&\left( \cB_1^c,   \lambda_1(Z^{(1)}) \geq \sqrt{2(1+\delta') \log n}\right) \nonumber \\
 &\leq  \mathbb{E}  \left[ \mathbb{P}  (\cB_1^c      |  X^{(1)} ,\lambda_1(Z^{(1)})    ) \1_{\cF_2} \1_{  \lambda_1(Z^{(1)}) \geq \sqrt{2(1+\delta') \log n}}\right]  + \mathbb{P}(\cF_2^c, \lambda_1(Z^{(1)}) \geq \sqrt{2(1+\delta') \log n}) \nonumber \\
 &\leq    C n^{-\psi(\delta) -c(\delta) +\eta_1}.
 \end{align}
 Combining with \eqref{152} and \eqref{911},   
 \begin{align}
 \mathbb{P}&\left(  (\cB_0\cap  \cB_1 \cap \{\bar{k}\in \cM(\delta)\} )^c ,  \lambda_1(Z^{(1)}) \geq \sqrt{2(1+\delta') \log n}\right) \leq    C n^{-\psi(\delta) -c(\delta) +\eta_1}. 
 \end{align}
Proceeding as in \eqref{904}-\eqref{905},
there exists $\varepsilon_1>0$ such that for any $\varepsilon<\varepsilon_1$,
\begin{align} \label{400}
\lim_{n\ri}  \mathbb{P}(\cB_0\cap \cB_1 \cap \{\bar{k}\in \cM(\delta)\}   \mid  \cU_\delta) = 1.
\end{align}
Recalling that the size of clique $K$ is $\bar{k},$
  the event $\cB_0\cap \cB_1 \cap \{\bar{k}\in \cM(\delta)\}  $ implies the statements in  Theorem \ref{theorem structure} and in particular
\begin{align} \label{906}
\lim_{n\ri}  \mathbb{P}( \text{there is a unique maximal clique}  \  K_X \  \text{in} \  X \  \text{and is equal to} \  K \mid \cU_\delta)  = 1.
\end{align} 
 
}  
 \qed


\section{Optimal localization of leading eigenvector} \label{section 7}
We prove Theorem \ref{eigenvectorloc} in this section. 
Recall $v = (v_1,\cdots,v_n)$ is the unit eigenvector  associated with the largest eigenvalue $\lambda_1=\lambda_1(Z)$ and let $K_X$ be the unique maximal clique (recall that Theorem \ref{theorem structure} ensures uniqueness conditioned on $\cU_\delta$ with high probability). Then,
\begin{align} \label{162}
\lambda_1 = \sum_{1\leq i,j\leq n} Z_{ij} v_iv_j =  \sum_{1\leq i,j\leq n} Z^{(1)}_{ij} v_iv_j  +  \sum_{1\leq i,j\leq n} Z^{(2)}_{ij} v_iv_j.
\end{align}

The proof has two parts. In the first, we prove that the eigenvector allocates most of its mass on $K_X,$ while in the second part we further show that the mass is uniformly distributed.\\

\noindent
\textbf{Mass concentration.}
Let us recall $r_\delta(\varepsilon)$ and  $c(\delta)$ defined in \eqref{603} and \eqref{105} respectively.  We choose a parameter $\varepsilon$ sufficiently small so that {
\begin{align}  \label{605}
2r_\delta(\varepsilon) &< c(\delta), \\ \label{190}
\varepsilon &\leq \frac{1}{\delta^4}, \\ \label{611}
\varepsilon&<\min ( \varepsilon_0  , \varepsilon_1)
\end{align}
($\varepsilon_0$ and $\varepsilon_1$ are positive constant depending on $\delta$ such that   \eqref{905} and \eqref{906} are  satisfied for $\varepsilon<\varepsilon_0$ and $\varepsilon<\varepsilon_1$ respectively).}
Recall that by \eqref{905} and \eqref{906},   conditionally on $\cU_\delta$,  with probability tending to $1$, the following is true: the maximal cliques $K_{X^{(1)}}$ and $K_X$ are unique and equal which will be often denoted by $K$ for brevity. Hence,
throughout the proof,  we assume the occurrence  of this event.

Recall
\begin{equation}\label{1001}
\cA_1:=\Big \{ \sum_{i\in K} v_i^2 \geq 1-\kappa\Big\},
\end{equation}
where $\kappa>0$ is the parameter in the statement of the theorem.
Since
\begin{align*}
\mathbb{P}\left(\sum_{1\leq i,j\leq n} Z^{(2)}_{ij} v_iv_j \geq \sqrt{\varepsilon}(1+\delta) \sqrt{\log n}\right)\leq \mathbb{P}\left(\lambda_1 (X^{(2)}) \geq  (1+\delta) \sqrt{\frac{\log n }{\log \log n}}\right)  \leq n^{-(2\delta+\delta^2)+o(1)},
\end{align*}
by \eqref{122},{   for any event $\cA$,   }
\begin{align} \label{163}
\mathbb{P}( \cA, \lambda_1 \geq \sqrt{2(1+\delta) \log n}) \leq  \mathbb{P}(\cA, \sum_{1\leq i,j\leq n} Z^{(1)}_{ij} v_iv_j   \geq  \sqrt{2(1+\delta') \log n})  +n^{-(2\delta+\delta^2)+o(1)} ,
\end{align}
where $\delta'>0$ as before is defined to be
\begin{align} \label{191}
\sqrt{2(1+\delta')} = \sqrt{2(1+\delta)} - \sqrt{\varepsilon}(1+\delta).
\end{align} 
{Note that since $\varepsilon \leq  \frac{1}{\delta^4}$, using the bound  for $\delta'$ in  \eqref{610}, we have
\begin{align} \label{delta'bound}
  \delta'   =  \delta + o_\delta(1) \qquad \text{as} \quad
   \delta\ri.
\end{align} 
}
We will now bound the first term on the RHS of \eqref{163} {with $\cA = \cA_1$} using Proposition \ref{spectral bound} and the fact that on the high probability event $\cF_1$ defined in \eqref{156}, the largest clique outside $K$ is at most {a triangle} which would make it suboptimal in a large deviation theoretic sense for the eigenvector to allocate mass off of $K$.
We now proceed to make this precise. The arguments will bear similarities with those appearing in the proof of Proposition \ref{prop}.

Let $C_1,\cdots,C_m$ be connected components of $X^{(1)}$, and let without loss of generality $C_1$ contain the clique $K$ of size $\bar{k}$.  Let $k_i$ be the maximum clique size in $C_i$. 

We will now work with the high probability event $\cF_0$ from \eqref{147}. As in the proof of Proposition \ref{prop},
 define $B_{1}$ to be the collection of  (directed)  edges defined by
\begin{align}\label{lowvalues12}
B_1:= \{e = (i,j) \in  \overrightarrow{E(C_1)}: v_i^2 , v_j^2 < \bar{\eta}^2 \}
\end{align} 
(recall that for any graph $H$, $\overrightarrow{E(H)}$ denotes the  set of \emph{directed}  edges in $H$),
{where the parameter $ \bar{\eta}$ is chosen to be
 \begin{align} \label{192}
 \bar{\eta}= \varepsilon^{1/4}.
\end{align} 
Define the set of (directed) edges  $B_2:= {\overrightarrow{E(C_1)}} \setminus B_1$.     Since  $|\{i: v_i^2 \geq \eta^2\}| \leq \frac{1}{ \bar{\eta}^2}$, under the event $\cF_0$, using the definition of $\cD_{4\delta'},$
\begin{align}   \label{184}
 \frac{1}{2} |B_2| \leq  \frac{1+4\delta'}{ \bar{\eta}^2} \frac{\log n}{\log \log n},
\end{align}
}by the same reasoning as preceding \eqref{362}.
We write
\begin{align} \label{310}
\sum_{1\leq i,j\leq n} Z^{(1)}_{ij} v_iv_j = \sum_{(i,j)\in  \overrightarrow{E(C_1)} } Z^{(1)}_{ij} v_iv_j   
& =  \sum_{(i,j)\in B_1} Z^{(1)}_{ij} v_iv_j    +  \sum_{(i,j)\in B_2} Z^{(1)}_{ij} v_iv_j    = : S_1 + S_2.
\end{align}
{By the same reasoning as in \eqref{smallvalue2}, and following the same notation as in the latter, for $2\bar{\eta}^2\leq 1$, under the event $\cF_0$, 
\begin{align} \label{161}
\sum_{(i,j)\in B_1 }  v_i^2v_j^2    \leq 2 \bar{\eta}^2(1-\bar{\eta}^2 ) + 2(4 \delta'+1) \bar{\eta}^4 =: \theta^4.
\end{align}
 Note that since $\bar{\eta} =\varepsilon^{1/4} \leq \frac{1}{\delta}$ (see \eqref{190}), by \eqref{delta'bound}, we have
\begin{align} \label{theta}
\theta = O\Big(\frac{1}{\delta^{1/2}}\Big) \qquad \text{as} \quad   \delta \ri.
\end{align} 
 }
By Cauchy-Schwarz inequality and \eqref{161},
 \begin{align} \label{164}
 S_1 \leq 
\Big(\sum_{(i,j)\in B_1}  v_i^2v_j^2\Big)^{1/2}  \Big(\sum _{(i,j)\in B_1} (Z^{(1)}_{ij})^2\Big)^{1/2}  \leq  \theta^2  \Big  (\sum _{(i,j)\in \overrightarrow{E(C_1)}} (Z^{(1)}_{ij})^2 \Big)^{1/2}  .
 \end{align}

Next, we estimate $S_2$.   Define $x: = \sum_{i \in K} v_i^2$ and $y:= \sum_{C_1 \setminus K} v_i^2$.  Recalling the definition of event $\cF_1$ in \eqref{156}, on the latter, 
\begin{align} \label{149}
\text{ the maximum size of clique in} \   K^c  \  \text{is at most} \ 3.
\end{align} 
 In fact, if  $ K^c $ contains a clique $K'$ of size 4, then $|E(K\cup K')| \geq   {\bar{k} \choose 2} + 6 $ and $|V(K\cup K')| = \bar{k}+4$, which contradicts \eqref{156}.    Hence, under the event $\cF_1$,
\begin{align} \label{140}
\sum _{(i,j)\in B_2} v_i^2 v_j^2 \leq  \sum _{(i,j)\in \overrightarrow{E(C_1)}} v_i^2 v_j^2 &\leq     \sum_{i\neq j, i,j\in K} v_i^2v_j^2   +2 \sum_{i\in K, j\in C_1 \setminus K} v_i^2v_j^2 + \sum_{i\neq j, (i,j)\in  \overrightarrow{E(C_1\setminus K)} } v_i^2v_j^2 \nonumber \\
& \leq 2 \Big(\frac{\bar{k}-1}{2\bar{k}} x^2 +xy + \frac{1}{3}y^2 \Big) .
\end{align}
The final bound follows from \eqref{020}.
Thus, under the event $\cF_1$,
\begin{align} \label{165}
S_2 \leq   \Big(\sum_{(i,j)\in B_2}  v_i^2v_j^2\Big)^{1/2} \Big(\sum _{(i,j)\in B_2} (Z^{(1)}_{ij})^2\Big)^{1/2}  \leq   \Big(\frac{\bar{k}-1}{\bar{k}} x^2 +2xy + \frac{2}{3}y^2 \Big)^{1/2}  \Big  (\sum _{(i,j)\in B_2}  (Z^{(1)}_{ij}  ) ^2 \Big)^{1/2}  .
\end{align}
Note that using 
the fact  $$1 = \sum_{i=1}^n v_i^2 =x+y+ \sum_{\ell=2}^m  \Big (\sum_{i\in C_\ell} v_i^2 \Big ),$$
we have
{
\begin{align} \label{166}
 \sum_{1\leq i,j\leq n} Z^{(1)}_{ij} v_iv_j &  = S_1+S_2 +  \sum_{\ell=2}^m  \sum_{(i,j)\in  \overrightarrow{E(C_\ell)} } Z^{(1)}_{ij} v_iv_j  \nonumber   \\
&  \leq   S_1   +  S_2 + \sum_{\ell=2}^m \lambda(Z^{(1)}_\ell)  \Big(\sum_{i\in C_\ell} v_i^2 \Big) \leq S_1   +  S_2 +   (1-x-y)   \max_{\ell=2,\cdots,m}\lambda(Z^{(1)}_\ell).
\end{align}
}

We now estimate the following conditional probability
\begin{align} \label{167}
\mathbb{P}&\Big(x< 1- \kappa ,  \sum_{1\leq i,j\leq n}  Z^{(1)}_{ij} v_iv_j   \geq  \sqrt{2(1+\delta') \log n}\mid X^{(1)}  \Big)\1_{\cF_0\cap \cF_2} \nonumber   \\
&= \mathbb{P}\Big(x< 1- \kappa , y \geq  \frac{\kappa}{2},  \sum_{1\leq i,j\leq n}  Z^{(1)}_{ij} v_iv_j   \geq  \sqrt{2(1+\delta') \log n}\mid X^{(1)}  \Big)\1_{\cF_0\cap \cF_2} \nonumber \\   
&
+ \mathbb{P}\Big(x< 1- \kappa , y<  \frac{\kappa}{2},  \sum_{1\leq i,j\leq n}  Z^{(1)}_{ij} v_iv_j   \geq  \sqrt{2(1+\delta') \log n}\mid X^{(1)}  \Big)\1_{\cF_0\cap \cF_2}  \nonumber  \\
&=: R_1+R_2.
\end{align}
We next bound $R_1$ and $R_2$ in turn.\\

\noindent
\textbf{Bounding $R_1$.}
By \eqref{166},  
 \begin{align} \label{168}
R_1 
  &\leq      \mathbb{P}(S_1 \geq   \theta \sqrt{2(1+\delta') \log n} \mid X^{(1)}) \1_{\cF_0\cap \cF_2}   \nonumber   \\
  &+  \mathbb{P}\Big(x< 1- \kappa, y \geq  \frac{\kappa}{2}, S_2 \geq   (x+y-\theta ) \sqrt{2(1+\delta') \log n} \mid X^{(1)}  \Big)  \1_{\cF_0\cap \cF_2}  \nonumber   \\ 
 & +  \mathbb{P}\Big(  \max_{\ell=2,\cdots,m}\lambda(Z^{(1)}_\ell) \geq \sqrt{2(1+\delta') \log n}\mid X^{(1)}  \Big)  \1_{\cF_0\cap \cF_2}  \nonumber \\
 &=: R_{1,1} + R_{1,2}+R_{1,3}.
 \end{align}
{By \eqref{164} and   \eqref{212} in  Lemma \ref{chi tail}  with $\gamma = \varepsilon$,  $L =  \frac{1}{\theta^2} (1+\delta') \log n $ and $m\leq  \frac{2+4\delta'}{\varepsilon} \frac{\log n}{\log \log n} + 4\delta' $ (see \eqref{124} and \eqref{125}),   for sufficiently large $n$,
\begin{align}   \label{169}
R_{1,1}& \leq    \mathbb{P}  \Big  (\sum _{i<j, (i,j)\in \overrightarrow{E(C_1)}} (Z^{(1)}_{ij})^2  \geq \frac{1}{\theta^2} (1+\delta') \log n  \mid X^{(1)}  \Big)  \1_{\cF_0\cap \cF_2}    \leq   n^{-\frac{1+\delta'}{2 \theta^2  } + (1+2\delta' )   + \varepsilon }.
\end{align}
}
To bound $R_{1,2},$ we first need the following technical bound.
There exists a constant $\lambda=\lambda(\kappa)\in (0,\frac{1}{100})$ such that for sufficiently large $\delta$, under the event $\bar{k}\in \cM(\delta)$,
 for $x< 1- \kappa , y \geq  \frac{\kappa}{2}$, 
\begin{align}\label{183}
\frac{\bar{k}-1}{\bar{k}} x^2 +2xy + \frac{2}{3}y^2  <\Big (  \frac{\bar{k}-1}{\bar{k}}  - \lambda\Big) (x+y-\theta)^2.
\end{align}
{ In fact, rearranging, this inequality holds if
$$
\lambda x^2 + 2 \Big(\frac{1}{\bar{k}} + \lambda\Big) xy + 2 \Big(  \frac{\bar{k}-1}{\bar{k}}  - \lambda\Big)\theta (x+y) < \Big (  \frac{\bar{k}-1}{\bar{k}}  - \lambda - \frac{2}{3}\Big)y^2.
$$   Recall that  by \eqref{argmaxloc}, $\bar{k}\in \cM(\delta)$ implies 
\begin{align} \label{185}
    \Big (\frac{1+\delta}{2}\Big )^{1/3} - 1\leq    \bar{k} \leq  \Big (\frac{1+\delta}{2}\Big )^{1/3} + 3.
\end{align}  
Hence, there is  $\lambda = \lambda(\kappa) >0 $ such that 
for $x< 1- \kappa , y \geq  \frac{\kappa}{2}$ and sufficiently large $\delta$, under the event $\bar{k}\in \cM(\delta)$,
$$\lambda x^2 < \frac{1}{10}y^2,\,\, 2\Big(\frac{1}{\bar{k}} + \lambda\Big) xy   <  \frac{1}{10}y^2,\,\, 2\Big (  \frac{\bar{k}-1}{\bar{k}}  - \lambda\Big)\theta (x+y)  <  2\theta < \frac{1}{10}y^2$$ (see \eqref{theta} for the bound of $\theta$). If {$\lambda$ is small enough, say $\lambda\in (0,\frac{1}{100})$,} then  for  sufficiently large $\delta$,   under the event $\bar{k}\in \cM(\delta)$,  $\frac{3}{10} < \frac{\bar{k}-1}{\bar{k}}  - \lambda - \frac{2}{3}$, and thus we obtain \eqref{183}.}

Thus, by    \eqref{165} and \eqref{183}, using  the fact $ (\frac{\bar{k}-1}{\bar{k}} - \lambda )^{-1}\ge \frac{\bar{k}}{\bar{k}-1}  + \lambda$,
\begin{align} \label{1711}
 R_{1,2}& \leq  \mathbb{P}   \Big (   \sum_{i<j, (i,j)\in B_2} (Z^{(1)}_{ij}  ) ^2  \geq   \Big ( \frac{\bar{k}}{\bar{k}-1}  + \lambda\Big ) (1+\delta') \log n \mid X^{(1)}\Big  )  \1_{\cF_0\cap \cF_2} .
  \end{align} 
 {Note that the event  $ \sum _{i<j,(i,j)\in B_2} (Z^{(1)}_{ij}  ) ^2 \geq  t$ implies the existence of a random subset $J \in V(C_1)$ with $|J| \leq  \left \lfloor {  \frac{1}{\bar{\eta}^2} }   \right  \rfloor   $ such that  $ \displaystyle{\sum _{i<j, i\text{ or }j\in J, {i\sim j}}(Z^{(1)}_{ij}  ) ^2  \geq  t}$.
 Hence,    by the union bound and   \eqref{212} in   Lemma \ref{chi tail} with
 \begin{align*}
 \gamma = \varepsilon, \ L =     \Big (\frac{\bar{k}}{\bar{k}-1}  + \lambda \Big ) (1+\delta') \log n, \ m\leq   \frac{1+4\delta'}{\bar{\eta}^2} \frac{\log n}{\log \log n}
\end{align*}  (see \eqref{184}), recalling  $\bar{\eta} =\varepsilon^{1/4}$, for large enough $\delta$, for  sufficiently large $n$,
\begin{align}  \label{171}
 R_{1,2}&\leq   |V(C_1)|^{\left \lfloor {  \frac{1}{\bar{\eta}^2} }  \right \rfloor}   n^{-\frac{1}{2 } (\frac{\bar{k}}{\bar{k}-1}  + \lambda)  (1+\delta') + \frac{\varepsilon}{2} \frac{1+4\delta'}{\bar{\eta}^2} + \varepsilon  } \leq  n^{-\frac{1}{2 } \frac{\bar{k}}{\bar{k}-1}   (1+\delta') -\frac{1}{2 }  \lambda (1+\delta') + \frac{1}{2}   (1+4\delta') \varepsilon^{1/2} +2\varepsilon} .
\end{align}
Here, we used the fact $|V(C_1)|\leq (1+4\delta')\frac{\log n}{\log \log n}$ to bound the term   $ |V(C_1)|^{\left \lfloor {  \frac{1}{\bar{\eta}^2} }  \right \rfloor}  $  by $n^\varepsilon$.

Recall from \eqref{149} that  the size of maximal clique in $C_\ell$, $\ell=2,\cdots,m$, is  at most 3 under the event $\mathcal{F}_2 = \{\bar{k}\in \cM(\delta)\} \cap \cF_1$. Hence, by Proposition   \ref{prop} with
\begin{align*}
 \alpha = 1+\delta', \   k\leq 3, \  \gamma = \varepsilon, \  \eta = \varepsilon^{1/4},  \  c_1 = 1+4\delta', \  c_2  = \frac{2+4\delta'}{\varepsilon}, \   c_3 = 4\delta',
\end{align*} setting     $\xi :=  (2 \eta^2  +  8 \eta^4 \delta')^{1/4} $,     for sufficiently large $\delta$, 
\begin{align} \label{172} 
  R_{1,3}&\leq  n (  n^{-\frac{1+\delta'}{2 \xi^2 }+ (1+2\delta') + \varepsilon} + n^{-\frac{3}{4} (1- \xi)^2 (1+\delta')+  \frac{1}{2}   (1+4\delta')\varepsilon^{1/2} + \varepsilon} ) \nonumber \\
  & \leq Cn^{-\frac{3}{4} (1- \xi)^2 (1+\delta')+ \frac{1}{2}    (1+4\delta')\varepsilon^{1/2} + \varepsilon+1}  .
\end{align}
Here, we used the following comparison between exponents:  as $\delta \ri$,
\begin{align*}
\frac{1+\delta'}{2 \xi^2 } - (1+2\delta') - \varepsilon & = \Omega ( \delta^2), \\
 \frac{3}{4} (1- \xi)^2 (1+\delta') - \frac{1}{2}    (1+4\delta')\varepsilon^{1/2} - \varepsilon   & =   \Big( \frac{3}{4} + o_\delta(1)\Big)\delta .
\end{align*}    This follows from  $\varepsilon \leq \frac{1}{\delta^4}$ and the fact
\begin{align} \label{xi}
\xi  = O \Big (   \frac{1}{\delta^{1/2}} \Big) \qquad \text{as} \quad \delta\ri,
\end{align}
which is a consequence of  $\xi =  (2 \eta^2  +  8 \eta^4 \delta')^{1/4}  =  (2 \varepsilon^{1/2}  +  8 \varepsilon\delta')^{1/4}   $, $\varepsilon \leq \frac{1}{\delta^4}$ and the bound for $\delta'$ in \eqref{delta'bound}.

}

{

Thus, applying the above bounds for $R_{1,1}$, $R_{1,2}$ and $R_{1,3}$ (see \eqref{169}, \eqref{171} and \eqref{172} respectively) to \eqref{168}, for sufficiently large $\delta$, 
\begin{align} \label{173}
R_1 \leq  Cn^{-\frac{1}{2 } \frac{\bar{k}}{\bar{k}-1}   (1+\delta') -\frac{1}{2 }  \lambda (1+\delta') + \frac{1}{2}    (1+4\delta') \varepsilon^{1/2} +2\varepsilon} .
\end{align}
This follows from the fact that   for sufficiently large $\delta$, under the event $\bar{k}\in \cM(\delta)$, RHS of  \eqref{171} is the slowest decaying term among itself, \eqref{169} and  \eqref{172}. In fact, using     $\varepsilon \leq  \frac{1}{\delta^4}$ and the bound  for $\theta$, $\bar{k}$ and $\xi$ in \eqref{theta}, \eqref{185}  and  \eqref{xi} respectively, we have
\begin{align} \label{410}
\frac{1+\delta'}{2\theta^2} - (1+2\delta') - \varepsilon  &= \Omega (\delta^2) , \\
\label{411}
 \frac{1}{2 } \frac{\bar{k}}{\bar{k}-1}   (1+\delta') +\frac{1}{2 }  \lambda (1+\delta')   - \frac{1}{2}    (1+4\delta') \varepsilon^{1/2}- 2\varepsilon  
 & =\Big (\frac{1+\lambda}{2} + o_\delta(1)\Big) \delta, \\
 \label{412}
 \frac{3}{4} (1- \xi)^2 (1+\delta')   -  \frac{1}{2}  (1+4\delta')\varepsilon^{1/2} -  \varepsilon - 1   &=\Big (  \frac{3}{4}+o_\delta(1)\Big)\delta.
\end{align}  
{Since $\lambda\in (0,\frac{1}{100})$,  for large $\delta$, \eqref{411} is smaller than the other two terms.}
 }\\

\noindent
\textbf{Bounding $R_2$.}
For $\upsilon>0$ to be chosen later, we write
 \begin{align} \label{174}
 R_2&\leq    \mathbb{P}(S_1 \geq   \theta \sqrt{2(1+\delta') \log n} |X^{(1)})   \1_{\cF_0\cap \cF_2} \nonumber \\ 
  &+ \mathbb{P}\Big (S_2 \geq  (1+\upsilon)\Big  (x+ \frac{\bar{k}}{\bar{k}-1} y\Big ) \sqrt{2(1+\delta') \log n} |X^{(1)} \Big  )  \1_{\cF_0\cap \cF_2} \nonumber \\ 
 & + \mathbb{P}\Big (x< 1- \kappa , y <  \frac{\kappa}{2}, \nonumber \\
 &  (1-x-y)\max_{\ell\geq 2}  \lambda_1(Z^{(1)}_\ell) \geq \Big (1   - (1+\upsilon) \Big (x+ \frac{\bar{k}}{\bar{k}-1} y\Big )-\theta \Big ) \sqrt{2(1+\delta') \log n}|X^{(1)}  \Big )  \1_{\cF_0\cap \cF_2} \nonumber   \\
 &=:   R_{2,1}+  R_{2,2}+  R_{2,3}.
 \end{align}
{
We take $\upsilon>0$ such that for sufficiently large $\delta>0$ and small $\kappa>0$, under the event $\bar{k}\in \cM(\delta)$,  for  any $x< 1- \kappa $ and $ y <  \frac{\kappa}{2}$,
\begin{align} \label{181}
 1   - (1+\upsilon) \Big(x+ \frac{\bar{k}}{\bar{k}-1} y \Big)- \theta  \ge  1- (1+\upsilon)(x+y) -  \frac{1+\upsilon}{2(\bar{k}-1)} \kappa   - \theta
 > \frac{9}{10}(1-x-y).
\end{align}
Here, the last inequality follows from the  bound $x+y \leq   1- \frac{\kappa}{2}$ and the bound for $\theta$ in   \eqref{theta}.
}

By \eqref{169},  for sufficiently large $n$,
 \begin{align}   \label{170}
R_{2,1}\leq   n^{-\frac{1+\delta'}{2 \theta^2  } + (1+2\delta') +\varepsilon}   .
\end{align}
Note that for sufficiently large $\delta$, under the event $\bar{k}\in \cM(\delta)$, {by the bound for $\bar{k}$ in \eqref{185},} we have $
  2 (\frac{\bar{k}-1}{2\bar{k}} x^2 +xy + \frac{1}{3}y^2 )  <   \frac{\bar{k}-1}{\bar{k}}    (x+ \frac{\bar{k}}{\bar{k}-1} y)^2
$.
Thus, by \eqref{165},
 \begin{align*} 
S_2 \leq     \Big(  \frac{\bar{k}-1}{\bar{k}}  \Big)^{1/2}   \Big  (x+ \frac{\bar{k}}{\bar{k}-1} y\Big)  \Big  (\sum _{(i,j)\in B_2}  (Z^{(1)}_{ij}  ) ^2 \Big)^{1/2}  .
\end{align*}
{ Hence,  by the same arguments as in \eqref{1711} and \eqref{171} (apply  \eqref{212} in  Lemma \ref{chi tail} with $\gamma = \varepsilon$, $L =    \frac{\bar{k}}{\bar{k}-1}(1+\upsilon)^2  (1+\delta') \log n $ and $m\leq   \frac{1+4\delta'}{\bar{\eta}^2} \frac{\log n}{\log \log n}$),  for large enough $\delta$, for  sufficiently large $n$,
 \begin{align} \label{175}
  R_{2,2} & \leq  \mathbb{P}   \Big (   \sum_{i<j,  (i,j)\in B_2} (Z^{(1)}_{ij}  ) ^2  \geq   \frac{\bar{k}}{\bar{k}-1}(1+\upsilon)^2  (1+\delta') \log n \mid X^{(1)} \Big  )  \1_{\cF_0\cap \cF_2}   
   \nonumber \\
  & \leq  |V(C_1)|^{\left \lfloor {  \frac{1}{\bar{\eta}^2} }  \right \rfloor}   n^{-\frac{1}{2 } \frac{\bar{k}}{\bar{k}-1} (1+\upsilon)^2 (1+\delta') +  \frac{1}{2}   (1+4\delta')\varepsilon^{1/2}+ \varepsilon } \leq  n^{-\frac{1}{2 } \frac{\bar{k}}{\bar{k}-1}   (1+\delta') - \upsilon(1+\delta') + \frac{1}{2}   (1+4\delta')\varepsilon^{1/2}  + 2\varepsilon} .
 \end{align}

Next,
by \eqref{181},    
\begin{align*}
 R_{2,3}&\leq  \mathbb{P}\Big (     \max_{\ell=2,\cdots,m}   \lambda_1(Z^{(1)}_\ell)   \geq   \frac{9}{10} \sqrt{2(1+\delta') \log n}  \mid X^{(1)}  \Big )     {   \1_{\cF_0\cap \cF_2}    } .
\end{align*}
Since  the size of maximal clique in $C_\ell$, $\ell=2,\cdots,m$, is  at most 3 under the event $\mathcal{F}_2$, by the same argument as in \eqref{172} (apply  Proposition   \ref{prop} with $\alpha = (\frac{9}{10})^2 (1+\delta')$, $k\leq 3$, $\eta = \varepsilon^{1/4}$ and $\gamma = \varepsilon$),    for sufficiently large $\delta$,   
\begin{align}  \label{176}
  R_{2,3}&\leq  n (  n^{- (\frac{9}{10})^2\frac{1+\delta'}{2 \xi^2 }+ (1+2\delta')+ \varepsilon} + n^{-\frac{3}{4}  (\frac{9}{10})^2(1+\delta')+  \frac{1}{2}    (1+4\delta')\varepsilon^{1/2}+ \varepsilon} ) \nonumber \\
  & \leq Cn^{-\frac{3}{4}  (\frac{9}{10})^2(1+\delta')+  \frac{1}{2}   (1+4\delta')\varepsilon^{1/2}+ \varepsilon+1}.
\end{align} 
}
 Thus, applying \eqref{170}, \eqref{175} and \eqref{176} to \eqref{174},  for large enough $\delta$, for  sufficiently large $n$, 
\begin{align} \label{177}
R_2 \leq  Cn^{-\frac{1}{2 } \frac{\bar{k}}{\bar{k}-1}   (1+\delta') - \upsilon(1+\delta') + \frac{1}{2}   (1+4\delta')\varepsilon^{1/2}  + 2\varepsilon} .
\end{align}
{This follows from the fact that   for sufficiently large $\delta$, under the event $\bar{k}\in \cM(\delta)$, RHS of  \eqref{175} is the slowest decaying term among itself, \eqref{170} and  \eqref{176}. This can be verified by the similar argument as in \eqref{173}, combined with  the fact that
\begin{align*}
\frac{1}{2 } \frac{\bar{k}}{\bar{k}-1}   (1+\delta') + \upsilon(1+\delta') - \frac{1}{2}   (1+4\delta')\varepsilon^{1/2}  - 2\varepsilon = \Big(\frac{1}{2} + \upsilon + o_\delta(1)\Big) \delta
\end{align*} 
and $ \frac{1}{2} < \frac{3}{4}(\frac{9}{10})^2$. }

Therefore, using the bounds in \eqref{173} and  \eqref{177} in  \eqref{167}  we get that 
\begin{align} \label{178}
\mathbb{P}\Big(x< 1- \kappa &,  \sum_{1\leq i,j\leq n}  Z^{(1)}_{ij} v_iv_j   \geq  \sqrt{2(1+\delta') \log n} \mid X^{(1)}  \Big) \1_{\cF_0\cap 
\cF_2} \nonumber \\
& \leq Cn^{-\frac{1}{2 } \frac{\bar{k}}{\bar{k}-1}    (1+\delta')  - \min( \frac{1}{2}\lambda,\upsilon) (1+\delta') + \frac{1}{2}    (1+4\delta')\varepsilon^{1/2} + 2\varepsilon} .
\end{align}

\noindent
\textbf{Finishing the proof.}
{Recall that   under the event $\cF_0$, the number of non-tree components  is less than $\log n$. Hence, by \eqref{141} and \eqref{142},  
\begin{align} \label{182}
 \mathbb{P}&\Big( \sum_{1\leq i,j\leq n}  Z^{(1)}_{ij} v_iv_j   \geq  \sqrt{2(1+\delta') \log n}\mid X^{(1)}  \Big)\1_{\cF_0 } \nonumber \\
 & \leq   \mathbb{P}\Big(   \max_{\ell=1,\cdots,m} \lambda_1 (Z_\ell^{(1)})  \geq  \sqrt{2(1+\delta') \log n}\mid X^{(1)}  \Big)\1_{\cF_0 } \nonumber \\
 &\leq C (\log n) n^{-\frac{\bar{k}}{2(\bar{k}-1)} (1- \xi)^2 (1+\delta') +  \frac{1}{2}   (1+4\delta')\varepsilon^{1/2} +\varepsilon } + Cn\cdot   n^{- (1- \xi)^2 (1+\delta') +  \frac{1}{2}   (1+4\delta')\varepsilon^{1/2} +\varepsilon }
\end{align}
(recall that  $\xi = (2\varepsilon^{1/2} + 8\varepsilon \delta')^{1/4}$).
Using the bound for $\xi$ in \eqref{xi}  and $\varepsilon \leq  \frac{1}{\delta^4}$,
 in the case $\bar{k}\geq 3$,  for   large $\delta$, for sufficiently large $n$, \eqref{182} is bounded by
\begin{align} \label{1820}
   C  n^{-\frac{\bar{k}}{2(\bar{k}-1)} (1- \xi)^2 (1+\delta') + \frac{1}{2}  (1+4\delta')\varepsilon^{1/2} +2\varepsilon },
\end{align}
and  for $\bar{k}=2$,   \eqref{182} is bounded by
\begin{align} \label{1821}
C  n^{- (1- \xi)^2 (1+\delta') +  \frac{1}{2}  (1+4\delta')\varepsilon ^{1/2}+\varepsilon +1}.
\end{align}
}
Recalling $\cF_2 = \{\bar{k}\in \cM(\delta)\} \cap \cF_1$, we write
\begin{align} \label{179}
 \mathbb{P}&\Big(x<1-\kappa, \sum_{1\leq i,j\leq n}  Z^{(1)}_{ij} v_iv_j   \geq  \sqrt{2(1+\delta') \log n}\Big)  \nonumber \\
  &\leq  \sum_{k\in \cM(\delta)} \mathbb{E} \Big[ \mathbb{P}\Big(x<1-\kappa, \sum_{1\leq i,j\leq n}  Z^{(1)}_{ij} v_iv_j   \geq  \sqrt{2(1+\delta') \log n}\mid X^{(1)}  \Big) \1_{\bar{k} = k} \1_{\cF_0} \1_{\cF_2} \Big]  \nonumber \\
  & + \sum_{k\in \cM(\delta)} \mathbb{E} \Big[ \mathbb{P}\Big(\sum_{1\leq i,j\leq n}  Z^{(1)}_{ij} v_iv_j   \geq  \sqrt{2(1+\delta') \log n}\mid X^{(1)}  \Big) \1_{\bar{k} = k} \1_{\cF_0} \1_{\cF_1^c} \Big]    \nonumber \\
&    + \sum_{k\notin \cM(\delta)} \mathbb{E} \Big[ \mathbb{P}\Big( \sum_{1\leq i,j\leq n}  Z^{(1)}_{ij} v_iv_j   \geq  \sqrt{2(1+\delta') \log n}\mid X^{(1)}  \Big) \1_{\bar{k} = k} \1_{\cF_0}   \Big] +  \mathbb{P}(\cF_0^c )   .
\end{align} 
{
 Recalling  $\varepsilon \leq \frac{1}{\delta^4}$, 
by \eqref{144} and  \eqref{178},   the first term in \eqref{179} is bounded by
\begin{align}  \label{612}
C \sum_{k\in \cM(\delta)}  n^{-\frac{1}{2 } \frac{k}{k-1}    (1+\delta')  - \min( \frac{1}{2}\lambda,\upsilon) (1+\delta') +\frac{1}{2}   (1+4\delta')\varepsilon^{1/2}  + 2\varepsilon}  \frac{(d')^{{k \choose 2}}}{n^{ {k \choose 2} - k} } .
\end{align}
We use the argument in \eqref{600}-\eqref{145} to bound this quantity. The exponent in $n$ above is less than  
\begin{align*}
\Big[ -  {k \choose 2} +  k - \frac{k}{2(k-1)}  (1-\xi)^2  (1+\delta')   +\frac{1+4\delta'}{2}   \varepsilon^{1/2}  + 2\varepsilon\Big] - \min\Big( \frac{1}{2}\lambda,\upsilon\Big) (1+\delta').
\end{align*} 
Comparing this with the exponent in \eqref{600}, we notice the    additional term $\min(\frac{1}{2}\lambda,\upsilon)(1+\delta')$. Hence,
 recalling $\eta_1$ in $\eqref{145} $ can be chosen as $\eta_1  = 2r_\delta(\varepsilon)$ (see \eqref{602}),  \eqref{612} can be bounded by
\begin{align}
 Cn^{-\psi(\delta) -  \min(\frac{1}{2}\lambda,\upsilon) (1+\delta') + 2r_\delta(\varepsilon)}  .
\end{align}
Similarly,  using   \eqref{153} and   \eqref{1820},   the second term in \eqref{179} is bounded by
\begin{align}  \label{323}
C \sum_{k\in \cM(\delta)}    n^{-\frac{k}{2(k-1)} (1- \xi)^2 (1+\delta') +  \frac{1}{2}   (1+4\delta')\varepsilon^{1/2}  +2\varepsilon } \left( \frac{2h(\delta) + 2}{n} \right)^{ {k \choose 2} - k + 1}  \leq   Cn^{-\psi(\delta) - 1+ 2r_\delta(\varepsilon)} .
\end{align}
This follows from the fact that there is an additional $n^{-1}$ term arising from $ ( \frac{2h(\delta) + 2}{n} )^{ {k \choose 2} - k + 1} $. In addition,
by \eqref{144}, \eqref{1820} and \eqref{1821},     the third term in \eqref{179} is bounded by
\begin{align}  \label{324}
C    \sum_{k\geq 3, k\notin \cM(\delta)}    n^{-\frac{\bar{k}}{2(\bar{k}-1)} (1- \xi)^2 (1+\delta') + \frac{1}{2}   (1+4\delta')\varepsilon^{1/2}  +2\varepsilon }  \frac{(d')^{{k \choose 2}}}{n^{ {k \choose 2} - k} } & + C  n^{- (1- \xi)^2 (1+\delta') +  \frac{1}{2}   (1+4\delta')\varepsilon^{1/2}   +\varepsilon +1} \nonumber \\
  &\leq
  Cn^{-\psi(\delta) -c(\delta)+2 r_\delta(\varepsilon)}. 
\end{align}
Here, the additional term $c(\delta)$  comes from the fact that in the first term, the summation is taken only over $k\in \cM(\delta)^c$ and $\phi_\delta(k) \geq \psi(\delta) +c(\delta)$ for $k\in \cM(\delta)^c$  (see  \eqref{502} for details). The second term  can be absorbed in the constant $C$ since $\psi(\delta) = (\frac{1}{2}+o_\delta(1))\delta$ (see \eqref{103}).

Finally,  by \eqref{129}, the last term in \eqref{179} is bounded by $n^{-2\delta'}$. Hence,  applying the above bounds to \eqref{179},   for sufficiently large $\delta>0$, 
\begin{align} \label{17900}
\mathbb{P}&\Big(x<1-\kappa, \sum_{1\leq i,j\leq n}  Z^{(1)}_{ij} v_iv_j   \geq  \sqrt{2(1+\delta') \log n}\Big) \leq   C   n^{-\psi(\delta)  -c(\delta)  +2r_\delta(\varepsilon) }  .
\end{align}
Above, we used the fact that  for large enough $\delta$, $c(\delta) <1  <  \min(\frac{1}{2}\lambda,\upsilon) (1+\delta') $, which follows from the bound for $\delta'$ in \eqref{delta'bound}. Since $2r_\delta(\varepsilon) < c(\delta)$ (see \eqref{605}), 
applying  \eqref{163} and then Theorem \ref{main theorem 1}, for sufficiently large $\delta$,
 \begin{align} \label{195}
\lim_{n\ri} \mathbb{P}( x<1-\kappa  \mid    \cU_\delta )    = 0.
\end{align}
Therefore,  for sufficiently large $\delta$,
\begin{align}
\lim_{n\ri} \mathbb{P}(   \cA_1 \mid \cU_\delta) =1.
\end{align}

}
 
\qed
\\

\noindent
\textbf{Uniformity of eigenvector.} We will aim to show $\sum_{i<j, i, j\in K} (v_i^2 - v_j^2)^2$ is small from which the form of uniformity appearing in the theorem statement follows immediately. 

{We first recall the parameter $\theta$  defined in \eqref{161}.}
By \eqref{166}, setting $\rho : = 16\kappa$,
\begin{align} \label{188}
\mathbb{P}&\Big(  x\geq  1-\kappa,   \sum_{i<j, i, j\in K} (v_i^2 - v_j^2)^2>   \rho ,  \sum_{1\leq i,j\leq n}  Z^{(1)}_{ij} v_iv_j   \geq  \sqrt{2(1+\delta') \log n}\mid X^{(1)}  \Big)  \1_{\cF_0\cap \cF_2}   \nonumber \\
&\leq    \mathbb{P}(S_1 \geq   \theta \sqrt{2(1+\delta') \log n} \mid X^{(1)})  \1_{\cF_0\cap \cF_2} \nonumber   \\
  &+  \mathbb{P}\Big(x\geq  1- \kappa,   \sum_{i<j, i, j\in K}  (v_i^2 - v_j^2)^2>   \rho ,  S_2 \geq   (x+y-\theta ) \sqrt{2(1+\delta') \log n} \mid X^{(1)}  \Big)   \1_{\cF_0\cap \cF_2} \nonumber   \\ 
 & +  \mathbb{P}\Big( \max_{\ell=2,\cdots,m}\lambda(Z^{(1)}_\ell) \geq  \sqrt{2(1+\delta') \log n}\mid X^{(1)}  \Big)   \1_{\cF_0\cap \cF_2} .
\end{align}
Since the first and third terms were already estimated during the analysis in the first part, we now estimate the second term. Using the identity
{\begin{align*}
\frac{\bar{k}-1}{2\bar{k}} \Big(\sum_{i\in K} v_i^2\Big)^2  - \sum_{i<j, i, j\in K} v_i^2 v_j^2 =  \frac{1}{2\bar{k}} \sum_{i<j, i, j\in K} (v_i^2 - v_j^2)^2,
\end{align*}
under the event  $\cF_1$, we have an improvement of \eqref{140}:
\begin{align*}
    \sum _{(i,j)\in B_2} v_i^2 v_j^2 &\leq  \sum_{i\neq j, i,j\in K} v_i^2v_j^2   +2 \sum_{i\in K, j\in C_1 \setminus K} v_i^2v_j^2 + \sum_{i\neq j,  (i,j)\in  \overrightarrow{E(C_1\setminus K)} } v_i^2v_j^2  \\
    & \leq  2 \Big(\frac{\bar{k}-1}{2\bar{k}} x^2 +xy + \frac{1}{3}y^2 \Big) - \frac{1}{\bar{k}} \sum_{i<j, i,j\in K} (v_i^2 - v_j^2)^2 
\end{align*}
}
where we used the above identity to bound the first term on the RHS.
Thus, under the event $ \sum_{i<j, i, j\in K} (v_i^2 - v_j^2)^2> \rho$, using $x\leq 1$, we obtain the analog of
\eqref{165}:
\begin{align} \label{189}
S_2 \leq     \Big( \Big(\frac{\bar{k}-1}{\bar{k}}  - \frac{\rho}{\bar{k}} \Big) x^2 +2xy + \frac{2}{3}y^2 \Big)^{1/2}  \Big  (\sum _{(i,j)\in B_2}  (Z^{(1)}_{ij}  ) ^2 \Big)^{1/2}  .
\end{align}
To bound the above, we need the following technical inequality. For   sufficiently  large $\delta$,   under the event $\bar{k}\in \cM(\delta)$,  for $x\geq  1- \kappa $,
\begin{align}   \label{187}
  \Big(\frac{\bar{k}-1}{\bar{k}}  - \frac{\rho}{\bar{k}} \Big) x^2 +2xy + \frac{2}{3}y^2  <  \Big(  \frac{\bar{k}-1}{\bar{k}}  -  \frac{\rho}{\bar{2k}} \Big) (x+y-\theta)^2.
\end{align} 
In fact,  by rearranging, \eqref{187} holds  for sufficiently large $\delta$ if
\begin{align} \label{1871}
2\Big(\frac{1}{\bar{k}} + \frac{\rho}{2\bar{k}}\Big)xy + 2\Big( \frac{\bar{k}-1}{\bar{k}} -  \frac{\rho}{2\bar{k} }\Big) \theta (x+y) \leq  \frac{\rho}{2\bar{k}}x^2,
\end{align}
since using \eqref{185} we know the coefficient of $y^2$ on the RHS is at least that on the LHS. 

For $x\geq 1-\kappa$, we have $y\leq \kappa$ and thus
$2(\frac{1}{\bar{k}} + \frac{\rho}{2\bar{k}})xy \leq   \frac{\rho}{4\bar{k}}x^2$ holds for    small enough $\kappa>0$ (recall $\rho  = 16\kappa$). Also, by the bounds for $\theta$ and $\bar{k}$ in  \eqref{theta} and  \eqref{185}  respectively, under the event $\bar{k}\in \cM(\delta)$, we have $ 2( \frac{\bar{k}-1}{\bar{k}} -  \frac{\rho}{2\bar{k} }) \theta (x+y) \leq 2\theta \leq \frac{\rho}{4\bar{k}}x^2$. The previous two inequalities imply \eqref{1871} and thus \eqref{187}.

Hence, by \eqref{189} and \eqref{187}, and further using $ (\frac{\bar{k}-1}{\bar{k}}  -\frac{\rho}{2\bar{k}}  )^{-1}\ge \frac{\bar{k}}{\bar{k}-1}  +  \frac{\rho}{2\bar{k}}  $,   the second term in \eqref{188} is bounded by
\begin{align} \label{197}
 \mathbb{P} &  \Big (   \sum_{i<j,  (i,j)\in B_2} (Z^{(1)}_{ij}  ) ^2  \geq   \Big(   \frac{\bar{k}}{\bar{k}-1}  +  \frac{\rho}{2\bar{k}}   \Big)  (1+\delta') \log n \mid  X^{(1)}\Big  )  \1_{\cF_0\cap \cF_2}  .
\end{align}
As before,    by the union bound and  Lemma \ref{chi tail} with $\gamma = \varepsilon$, $L =     ( \frac{\bar{k}}{\bar{k}-1}  +  \frac{\rho}{2\bar{k}} ) (1+\delta') \log n $ and $m\leq   \frac{1+4\delta'}{\bar{\eta}^2} \frac{\log n}{\log \log n}$,   the above, and thus  the second term in \eqref{188}, is bounded by
 \begin{align}
   |V(C_1)|^{\left \lfloor {  \frac{1}{\bar{\eta}^2} }  \right \rfloor}   n^{-\frac{1}{2 } (\frac{\bar{k}}{\bar{k}-1}  + \frac{\rho}{2\bar{k}}   )  (1+\delta') + \frac{1}{2}   (1+4\delta')\varepsilon^{1/2} + \varepsilon  } \leq n^{-\frac{1}{2 } \frac{\bar{k}}{\bar{k}-1}    (1+\delta')  -\frac{\rho}{4\bar{k}}  (1+\delta') +  \frac{1}{2}   (1+4\delta')\varepsilon^{1/2}  + 2\varepsilon} .
 \end{align}
Since the first and last terms in \eqref{188} are bounded by \eqref{169} and  \eqref{172} respectively, one can deduce that
\begin{align} \label{321}
\mathbb{P}\Big(  x\geq  1-\kappa,   \sum_{i<j, i, j\in K} (v_i^2 - v_j^2)^2>   \rho ,&   \sum_{1\leq i,j\leq n}  Z^{(1)}_{ij} v_iv_j   \geq  \sqrt{2(1+\delta') \log n}|X^{(1)}  \Big)  \1_{\cF_0\cap \cF_2}   \nonumber \\
&\leq  C n^{-\frac{1}{2 } \frac{\bar{k}}{\bar{k}-1}    (1+\delta')  - \frac{\rho}{4\bar{k}}  (1+\delta') + \frac{1}{2}   (1+4\delta')\varepsilon^{1/2} + 2\varepsilon} .
\end{align}

Similarly as \eqref{179}, we write
\begin{align}  \label{320}
 \mathbb{P}&\Big( x\geq  1-\kappa,   \sum_{i<j, i, j\in K} (v_i^2 - v_j^2)^2>   \rho ,  \sum_{1\leq i,j\leq n}  Z^{(1)}_{ij} v_iv_j   \geq  \sqrt{2(1+\delta') \log n}\Big)  \nonumber \\
  &\leq  \sum_{k\in \cM(\delta)} \mathbb{E} \Big[ \mathbb{P}\Big( x\geq  1-\kappa,   \sum_{i<j, i, j\in K} (v_i^2 - v_j^2)^2>   \rho ,  \sum_{1\leq i,j\leq n}  Z^{(1)}_{ij} v_iv_j   \geq  \sqrt{2(1+\delta') \log n}\mid X^{(1)}  \Big) \1_{\bar{k} = k} \1_{\cF_0} \1_{\cF_2} \Big]  \nonumber \\
  & + \sum_{k\in \cM(\delta)} \mathbb{E} \Big[ \mathbb{P}\Big(\sum_{1\leq i,j\leq n}  Z^{(1)}_{ij} v_iv_j   \geq  \sqrt{2(1+\delta') \log n}\mid X^{(1)}  \Big) \1_{\bar{k} = k} \1_{\cF_0} \1_{\cF_1^c} \Big]    \nonumber \\
&    + \sum_{k\notin \cM(\delta)} \mathbb{E} \Big[ \mathbb{P}\Big( \sum_{1\leq i,j\leq n}  Z^{(1)}_{ij} v_iv_j   \geq  \sqrt{2(1+\delta') \log n}\mid X^{(1)}  \Big) \1_{\bar{k} = k} \1_{\cF_0}   \Big] +  \mathbb{P}(\cF_0^c )   .
\end{align} 
Using  \eqref{185} and  \eqref{321}, there exist a constant $c>0$ such that   the first term in  \eqref{320} is bounded by 
\begin{align}
C \sum_{k\in \cM(\delta)} n^{-\frac{1}{2 } \frac{\bar{k}}{\bar{k}-1}    (1+\delta')  - \frac{\rho}{4\bar{k}}  (1+\delta') + \frac{1}{2}   (1+4\delta')\varepsilon^{1/2} + 2\varepsilon}   \frac{(d')^{{k \choose 2}}}{n^{ {k \choose 2} - k} } \leq  Cn^{-\psi(\delta) - c \rho \delta^{2/3}+ 2r_\delta(\varepsilon) }  .
\end{align}
Other three terms in \eqref{320} can be bounded using   \eqref{323},  \eqref{324} and  \eqref{129} respectively. Hence, combining these together, using the fact that $c(\delta)<1<c\rho \delta^{2/3}$ for large $\delta$, we have
\begin{align*}  
\mathbb{P}&\Big( x\geq  1-\kappa,   \sum_{i<j, i, j\in K} (v_i^2 - v_j^2)^2>   \rho ,  \sum_{1\leq i,j\leq n}  Z^{(1)}_{ij} v_iv_j   \geq  \sqrt{2(1+\delta') \log n}\Big) \leq   C   n^{-\psi(\delta)  -c(\delta)+ 2r_\delta(\varepsilon)}  .
\end{align*}
Since $2r_\delta(\varepsilon) < c(\delta)$,
applying  \eqref{163} and then Theorem \ref{main theorem 1},
\begin{align*}
\lim_{n\ri} \mathbb{P}\Big( x \geq 1-\kappa,   \sum_{i<j, i, j\in K} (v_i^2 - v_j^2)^2>   \rho \mid \cU_\delta  \Big)=0,
\end{align*}
and thus by  \eqref{195},
\begin{align*}
\lim_{n\ri} \mathbb{P}\Big(x \geq 1-\kappa,  \sum_{i<j, i, j\in K} (v_i^2 - v_j^2)^2\leq    \rho \mid  \cU_\delta  \Big)=1.
\end{align*}

It is now straightforward to obtain the uniformity statement in the theorem from the smallness of $\sum_{i<j,i,j \in K}(v_i^2-v_j^2)^2.$ 
To see this, note that setting $S:= \sum_{i\in K} v_i^2$,
\begin{align} \label{340}
  \sum_{i\in K} \Big(v_i^2  - \frac{1}{|K|} S\Big )^2   
  &=\sum_{i\in K} v_i^4 - 2 \frac{S}{|K|} \sum_{i\in K} v_i^2 + \frac{1}{|K|}S^2   = \sum_{i\in K} v_i^4  - \frac{1}{|K|}S^2 \nonumber   \\
&= \frac{1}{|K|}\Big((K-1) \sum_{i\in K} v_i^4 - 2  \sum_{i<j, i, j\in K}  v_i^2 v_j^2\Big)  = \frac{1}{|K|} \sum_{i<j, i, j\in K} (v_i^2 - v_j^2)^2 .
\end{align}
Hence, recalling $\rho  = 16\kappa$, for sufficiently large $\delta$,
\begin{align} \label{355}
\lim_{n\ri} \mathbb{P}\Big( x \geq 1-\kappa,    \sum_{i\in K} \Big(v_i^2  - \frac{1}{|K|} \sum_{i\in K} v_i^2 \Big)^2    \leq     \frac{16\kappa}{|K|}    \mid   \cU_\delta  \Big)=1.
\end{align}
{
Using the inequality $(a+b)^2\leq 2(a^2+b^2)$, under the event
\begin{align*}
\Big\{ \sum_{i\in K} v_i^2 \geq 1-\kappa\Big \} \cap  \Big\{\sum_{i\in K} \Big(v_i^2  - \frac{1}{|K|} \sum_{i\in K} v_i^2 \Big)^2    \leq     \frac{16\kappa}{|K|}  \Big\},
\end{align*} 
we have
\begin{align} \label{402}
\sum_{i\in K}\Big( v_i^2  - \frac{1}{|K|}  \Big)^2 &\leq  2  \sum_{i\in K}\Big( v_i^2  - \frac{1}{|K|}\sum_{i\in K}v_i^2  \Big)^2  +2 |K| \Big (\frac{1}{|K|}\sum_{i\in K}v_i^2  - \frac{1}{|K|}\Big )^2 \nonumber  \\
& \leq  \frac{32\kappa}{|K|}  + \frac{2\kappa^2}{|K|} \leq \frac{40\kappa}{|K|} = : \frac{\kappa_0}{|K|}.
\end{align}
{Recalling that $K_X=K$,
the proof is complete.}
}

\section{Uniform largeness of Gaussian weights}\label{section 8}
We prove Theorem \ref{uniform gaussian} in this section.
The proof essentially proceeds by comparing the $\ell_1$ and $\ell_2$ norms of the Gaussian variables on the edges of the clique $K_X$ by obtaining sharp estimates on each of them. The final statement then can be deduced from a quantitative version of the Cauchy-Schwarz inequality. However, as the statement of the theorem indicates, we will end up working with a set $T$ slightly smaller than $K_X.$ 
Implementing the strategy involves a few steps and in particular relies on Theorem \ref{eigenvectorloc} which is the reason we proved the latter first. \\

\noindent
\textbf{Sum of squares of the Gaussian weights.} 
{We use the same notations as in Section \ref{section 7}. Also, as in the beginning of the proof of Theorem \ref{eigenvectorloc}, we assume that  the maximal cliques $K:= K_{X^{(1)}}$ and $K_X$ are unique and equal.}

Setting $\rho := 16\kappa$,
 similarly as \eqref{187},  for   sufficiently large $\delta$, under the event $\bar{k}\in \cM(\delta)$, for  $x\geq 1-\kappa
$,
\begin{align} \label{201}
\frac{\bar{k}-1}{\bar{k}} x^2 +2xy + \frac{2}{3}y^2 \leq  \Big( \frac{\bar{k}-1}{\bar{k}} + \frac{\rho}{\bar{k}} \Big) (x+y-\theta)^2.
\end{align}
Using the above and \eqref{165},
\begin{align}  \label{204}
S_2 \leq      \Big( \frac{\bar{k}-1}{\bar{k}} + \frac{\rho}{\bar{k}}  \Big)^{1/2}   (x+y-\theta)    \Big  (\sum _{(i,j)\in B_2}  (Z^{(1)}_{ij}  ) ^2 \Big)^{1/2},
\end{align}
where $S_1$ and $S_2$ were defined in \eqref{310}.
We now define an event guaranteeing a sharp behavior of the $\ell_2$ norm of the Gaussian variables on the edges in $B_2$ where the latter was defined below \eqref{lowvalues12},
\begin{align} \label{a1}
\cA_3 := \Big\{ 2  \Big( \frac{\bar{k}}{\bar{k}-1} - \frac{\rho}{\bar{k}}  \Big) (1+\delta') \log n\leq  \sum _{(i,j)\in B_2}  (Z^{(1)}_{ij}  ) ^2  \leq  2   \Big( \frac{\bar{k}}{\bar{k}-1} +  \frac{\rho}{\bar{k}}  \Big) (1+\delta') \log n\Big\}.
\end{align} 

Thus we have
\begin{align}   \label{203}
\mathbb{P}&\Big(  \cA_3^c,   x \geq 1-\kappa, \sum_{1\leq i,j\leq n}  Z^{(1)}_{ij} v_iv_j   \geq  \sqrt{2(1+\delta') \log n}\mid X^{(1)}  \Big)  \1_{\cF_0\cap \cF_2}   \nonumber \\
&\leq    \mathbb{P}(S_1 \geq   \theta \sqrt{2(1+\delta') \log n} \mid X^{(1)})  \1_{\cF_0\cap \cF_2} \nonumber   \\
  &+  \mathbb{P}\Big( \cA_3^c,  x \geq 1-\kappa,   S_2 \geq   (x+y-\theta ) \sqrt{2(1+\delta') \log n} \mid X^{(1)}  \Big)   \1_{\cF_0\cap \cF_2} \nonumber   \\ 
 & +  \mathbb{P}\Big(  \max_{\ell=2,\cdots,m}\lambda(Z^{(1)}_\ell) \geq  \sqrt{2(1+\delta') \log n}\mid X^{(1)}  \Big)   \1_{\cF_0\cap \cF_2} .
\end{align}
 Since the  first and last terms above can be bounded using  \eqref{169} and \eqref{172} respectively, we  only bound
the second term.

\noindent
\textit{-Bounding the second term:}
  Using $(\frac{\bar{k}-1}{\bar{k}} + \frac{\rho}{\bar{k}} )^{-1}\geq  \frac{\bar{k}}{\bar{k}-1} - \frac{\rho}{\bar{k}} $, 
\begin{align} \label{214}
\mathbb{P}&\Big( \cA_3^c, x \geq 1-\kappa,   S_2 \geq   (x+y-\theta ) \sqrt{2(1+\delta') \log n} \mid  X^{(1)}  \Big)   \1_{\cF_0\cap \cF_2} \nonumber \\
&\overset{\eqref{204}}{\leq} 
 \mathbb{P}\Big(\cA_3^c,   \sum _{(i,j)\in B_2}  (Z^{(1)}_{ij}  ) ^2  \geq   2\Big(  \frac{\bar{k}}{\bar{k}-1} -  \frac{\rho}{\bar{k}}  \Big)  (1+\delta') \log n \mid X^{(1)}  \Big)   \1_{\cF_0\cap \cF_2}  \nonumber \\
 &\leq   \mathbb{P}\Big( \sum _{i<j,  (i,j)\in B_2}  (Z^{(1)}_{ij}  ) ^2  \geq   \Big(  \frac{\bar{k}}{\bar{k}-1} + \frac{\rho}{\bar{k}} \Big)  (1+\delta') \log n \mid X^{(1)}  \Big)   \1_{\cF_0\cap \cF_2},
\end{align}
where the last inequality follows from the definition of $\cA_3$.
As before,  by union bound and \eqref{212} in  Lemma \ref{chi tail} with $\gamma = \varepsilon$, $L =      (\frac{\bar{k}}{\bar{k}-1} + \frac{\rho}{\bar{k}}  ) (1+\delta') \log n  $ and $m\leq   \frac{1+4\delta'}{\bar{\eta}^2} \frac{\log n}{\log \log n}$ (using the bound on $|B_2|$ in \eqref{184}), for sufficiently large $n$, the above, and thus the second term in \eqref{203}, is bounded by
\begin{align} \label{215}
  |V(C_1)|^{\left \lfloor {  \frac{1}{\bar{\eta}^2} }  \right \rfloor}  n^{-\frac{1}{2 }  (\frac{\bar{k}}{\bar{k}-1} + \frac{\rho}{\bar{k}} )  (1+\delta') +  \frac{1}{2}   (1+4\delta')\varepsilon^{1/2}  + \varepsilon }   \leq  n^{-\frac{1}{2 }  \frac{\bar{k}}{\bar{k}-1}  (1+\delta') -  \frac{\rho}{2\bar{k}} (1+\delta') +  \frac{1}{2}   (1+4\delta')\varepsilon^{1/2}  +2 \varepsilon } .
\end{align}

{
\noindent
\textit{-Combining altogether:}
}
As mentioned above, the first and last terms in \eqref{203} can be bounded using   \eqref{169} and \eqref{172} respectively. Hence, combining these together,
\begin{align} \label{403}
\mathbb{P}\Big(  \cA_3^c,   x  \geq 1-\kappa, & \sum_{1\leq i,j\leq n}  Z^{(1)}_{ij} v_iv_j   \geq  \sqrt{2(1+\delta') \log n} \mid X^{(1)}  \Big)  \1_{\cF_0\cap \cF_2}  \nonumber \\
&    \leq   Cn^{-\frac{1}{2 }  \frac{\bar{k}}{\bar{k}-1}  (1+\delta') -  \frac{\rho}{2\bar{k}} (1+\delta') +  \frac{1}{2}   (1+4\delta')\varepsilon^{1/2}  +2 \varepsilon } .
\end{align}
{This follows from the fact that    for sufficiently   large $\delta$, under the event $\cF_0\cap \cF_2$,  \eqref{215} is the slowest decaying term among itself, \eqref{169} and  \eqref{172}. This follows from \eqref{410} and \eqref{412} and observing that $\varepsilon \leq  \frac{1}{\delta^4}$ and the bound  for    $\bar{k}$ in  \eqref{185} together,  under the event $\bar{k}\in \cM(\delta)$, implies $$\frac{1}{2 }  \frac{\bar{k}}{\bar{k}-1}  (1+\delta') +  \frac{\rho}{2\bar{k}} (1+\delta') -  \frac{1}{2}   (1+4\delta')\varepsilon^{1/2}  -  2 \varepsilon   =  \Big(\frac{1}{2} + o_\delta(1) \Big) \delta.$$ 
}

Similarly as in \eqref{179}, we write
\begin{align}   \label{404}
 \mathbb{P}&\Big( \cA_3^c  ,  x \geq 1-\kappa,  \sum_{1\leq i,j\leq n}  Z^{(1)}_{ij} v_iv_j   \geq  \sqrt{2(1+\delta') \log n}\Big)  \nonumber \\
  &\leq  \sum_{k\in \cM(\delta)} \mathbb{E} \Big[ \mathbb{P}\Big( \cA_3^c,  x  \geq 1-\kappa,  \sum_{1\leq i,j\leq n}  Z^{(1)}_{ij} v_iv_j   \geq  \sqrt{2(1+\delta') \log n}\mid X^{(1)}  \Big) \1_{\bar{k} = k} \1_{\cF_0} \1_{\cF_2} \Big]  \nonumber \\
  & + \sum_{k\in \cM(\delta)} \mathbb{E} \Big[ \mathbb{P}\Big(  \sum_{1\leq i,j\leq n}  Z^{(1)}_{ij} v_iv_j   \geq  \sqrt{2(1+\delta') \log n}\mid X^{(1)}  \Big) \1_{\bar{k} = k} \1_{\cF_0} \1_{\cF_1^c} \Big]    \nonumber \\
&    + \sum_{k\notin \cM(\delta)} \mathbb{E} \Big[ \mathbb{P}\Big( \sum_{1\leq i,j\leq n}  Z^{(1)}_{ij} v_iv_j   \geq  \sqrt{2(1+\delta') \log n}\mid X^{(1)}  \Big) \1_{\bar{k} = k} \1_{\cF_0}   \Big] +  \mathbb{P}(\cF_0^c ).
\end{align} 
{
First,
as in  \eqref{612}, one can bound the first term above using \eqref{403}. In fact,
using the bound for $\delta'$ and $\bar{k}$ in \eqref{delta'bound} and \eqref{185} respectively,  under $\bar{k}\in \cM(\delta)$, $\frac{\rho}{2\bar{k}}(1+\delta')\geq c\delta^{2/3}$  for some $c>0$. Thus,  for large $\delta$,  the first term  in  \eqref{404}  can be bounded by  $ Cn^{- \psi(\delta)  - c'\delta^{2/3}  } $ for some $c'<c$.  Combining this with  the bounds for other three terms, previously obtained in \eqref{323}, \eqref{324} and \eqref{129} respectively, using the fact that  $c(\delta)<1<c'\delta^{2/3}$ for large $\delta$,   \eqref{404}  is bounded by  $  
   C   n^{-\psi(\delta)  -c(\delta)  +2r_\delta(\varepsilon)}  .$
   }
Hence, using  \eqref{163}, for large $\delta$,
\begin{align}  
\mathbb{P}(  \cA_3^c  ,  x\geq 1-\kappa, \lambda_1 \geq \sqrt{2(1+\delta) \log n}) \leq  C   n^{-\psi(\delta)  - c(\delta)+ 2r_\delta(\varepsilon)}  .
\end{align}
Since  $2r_\delta(\varepsilon)  < c(\delta)$,  combined with Theorem \ref{main theorem 1} and \eqref{195},   
  for sufficiently large $\delta$,
\begin{align} \label{231}
\lim_{n\ri} \mathbb{P}\Big(\cA_3   \mid   \cU_\delta  \Big)=1.
\end{align}

\noindent
\textbf{Sum of absolute values of Gaussian weights.} 
We now estimate the sum of absolute values of $Z^{(1)}_{ij} $. 
Defining 
\begin{align}
\mathcal{A}' := \Big\{ \sum_{i\in K}\Big( v_i^2  - \frac{1}{\bar{k}}  \Big)^2      \leq     \frac{\kappa_0}{\bar{k}}\Big\}
\end{align}  
(recall $\kappa_0 = 40\kappa  $, see \eqref{402}),
since $\bar{k} = |K|$,  by \eqref{355} and \eqref{402},
\begin{align} \label{238}
\lim_{n\ri} \mathbb{P}\Big(\cA'   \mid   \cU_\delta \Big)=1. 
\end{align}
Recalling  that $K=K_X$ with probability going to one conditionally on $\cU_\delta,$ the events $\cA_2$ (from the statement of the theorem) and $\cA'$ are essentially the same. 

We now define the set of vertices $T$ appearing in the statement of the theorem,
\begin{align*}
T :=\Big \{i \in K: \Big\vert v_i^2 - \frac{1}{\bar{k}}\Big\vert < \frac{\kappa_0^{1/4}}{\bar{k}}\Big\}.
\end{align*}
Then, by \eqref{185},  for sufficiently large $\delta$, under the event $|\bar{k} - h(\delta)| \leq 1$,
\begin{align} \label{361}
i\neq j, i,j\in T \quad \text{implies} \quad   (i,j)\in B_2.
\end{align} This is because for $i\in T$ and large $\delta$, $v_i^2 \geq (1-\kappa_0^{1/4}) \frac{1}{\bar{k}} \overset{\eqref{185}}{\geq}   \frac{c}{\delta^{1/3}} > \frac{1}{\delta^2} \geq  \bar{\eta}^2$ where the final inequality is by our choice of $\bar{\eta}$ in \eqref{192}.
We now write
\begin{align} \label{311}
S_2 =  \sum_{(i,j)\in B_2} Z^{(1)}_{ij} v_iv_j  =     \sum_{i  \ \text{or} \ j \in T^c, (i,j)\in B_2} Z^{(1)}_{ij} v_iv_j +  \sum_{i,j\in T, (i,j)\in B_2} Z^{(1)}_{ij} v_iv_j   =:  S_{21} +  S_{22}
\end{align}
(see \eqref{310} for the definition of $S_2$).
By Cauchy-Schwarz inequality,
under the  event $\cA'$,
\begin{align} \label{221}
 \sum_{i\in K} \Big\vert v_i^2  - \frac{1}{\bar{k}}   \Big\vert     \leq     \kappa_0^{1/2}.
\end{align}
Thus, under the event $\cA'$,
\begin{align} \label{230}
{|T^c\cap K|} \leq \kappa_0^{1/4}\bar{k}, \qquad |T| \geq (1-  \kappa_0^{1/4})\bar{k}.
\end{align}
Note that \eqref{221} implies
$\sum_{v_i^2 \geq   \frac{1}{\bar{k}} (1 +\kappa_0^{1/4})    }  (v_i^2   - \frac{1}{\bar{k}}) \leq  \kappa_0^{1/2}$, and thus under the event $\cA'$,
\begin{align*}
\sum_{i\in T^c} v_i^2  = \sum_{v_i^2 \geq   \frac{1}{\bar{k}} (1 +\kappa_0^{1/4})    } v_i^2 + \sum_{v_i^2 \leq   \frac{1}{\bar{k}}(1-\kappa_0^{1/4})   } v_i^2    \leq \Big ( \kappa_0^{1/2} +  \frac{1}{\bar{k}} \kappa_0^{1/4}\bar{k}   \Big) +  \frac{1}{\bar{k}}    \kappa_0^{1/4}\bar{k}   =  \kappa_0^{1/2}+   2\kappa_0^{1/4}  .
\end{align*}
Hence, under  the event $\cA'$, 
\begin{align} \label{440}
\sum_{i  \ \text{or} \ j \in T^c, (i,j)\in B_2}  v_i^2v_j^2 \leq 2   \Big(  \sum_{i\in T^c} v_i^2  \Big)\Big(  \sum_{j\in C_1 } v_j^2 \Big)   \leq  2\kappa_0^{1/2}+   4\kappa_0^{1/4}   =: \kappa'^2,
\end{align}
and thus  
\begin{align} \label{224}
 S_{21} \leq \Big  (\sum_{i  \ \text{or} \ j \in T^c, (i,j)\in B_2} (Z^{(1)}_{ij})^2 \Big  )  ^{1/2} \Big (\sum_{i  \ \text{or} \ j \in T^c, (i,j)\in B_2}  v_i^2v_j^2 \Big)  ^{1/2}\leq  \kappa' \Big(\sum_{i  \ \text{or} \ j \in T^c, (i,j)\in B_2} (Z^{(1)}_{ij})^2 \Big )^{1/2}   .
\end{align}
In addition, {using the fact that $v_i^2 < \frac{1}{\bar{k}}(1+\kappa_0^{1/4})$ for $i\in T$,    }
\begin{align} \label{225}
| S_{22} |  \leq    \frac{1}{\bar{k}} (1+ \kappa_0^{1/4} ) \sum_{i,j\in T, (i,j)\in B_2} | Z^{(1)}_{ij} |   \leq \frac{1}{\bar{k}} (1+ \kappa_0^{1/4} ) \sum_{i\neq j,i,j\in T} | Z^{(1)}_{ij} |   .
\end{align}
Now, we define the  following event analogous to $\cA_3,$ but for the $\ell_1$ norm,
\begin{align} \label{a4}
\cA_4 :=\Big \{ \bar{k} (1-3 \kappa^{1/4} ) \sqrt{2(1 + \delta')\log n} \leq \sum_{i\neq j, i,j\in T} | Z^{(1)}_{ij} |  \leq  \bar{k} (1+3 \kappa^{1/4}) \sqrt{2(1 + \delta')\log n} \Big \}.
\end{align}
Now using the decomposition in \eqref{166} and further using \eqref{311},
\begin{align*}
 \sum_{1\leq i,j\leq n} Z^{(1)}_{ij} v_iv_j   \leq S_1   +  S_{21}  +  S_{22}+   (1-x-y)   \max_{\ell=2,\cdots,m}\lambda(Z^{(1)}_\ell),
\end{align*}
we write
\begin{align}   \label{205}
\mathbb{P}&\Big(  \cA_4^c, \cA', x \geq 1-\kappa,   \sum_{1\leq i,j\leq n}  Z^{(1)}_{ij} v_iv_j   \geq  \sqrt{2(1+\delta') \log n}\mid X^{(1)}  \Big)  \1_{\cF_0\cap \cF_2}   \nonumber \\
&\leq    \mathbb{P}(S_1 \geq   \theta \sqrt{2(1+\delta') \log n} |X^{(1)})  \1_{\cF_0\cap \cF_2}  +  \mathbb{P}(  \cA', S_{21} \geq   \sqrt{\kappa'} \sqrt{2(1+\delta') \log n} \mid X^{(1)})  \1_{\cF_0\cap \cF_2}  \nonumber   \\
  &+  \mathbb{P}\Big( \cA_4^c, x \geq 1-\kappa,   S_{22} \geq   (x+y-\theta - \sqrt{\kappa'} ) \sqrt{2(1+\delta') \log n} \mid X^{(1)}  \Big)   \1_{\cF_0\cap \cF_2} \nonumber   \\ 
 & +  \mathbb{P}\Big(   \max_{\ell=2,\cdots,m}\lambda(Z^{(1)}_\ell) \geq  \sqrt{2(1+\delta') \log n}\mid X^{(1)}  \Big)   \1_{\cF_0\cap \cF_2} 
\end{align}
 (recall that $\kappa'$ is defined in  \eqref{440}). Since we already have estimates for the first and last terms above, we  only focus on the second and third terms. \\

\noindent
\textit{-Bounding the second term:}
By \eqref{224},
\begin{align} \label{226}
\mathbb{P}&( \cA', S_{21} \geq   \sqrt{\kappa'} \sqrt{2(1+\delta') \log n} \mid X^{(1)})  \1_{\cF_0\cap \cF_2} \nonumber \\
&\leq  \mathbb{P}\Big(  \cA', \sum_{i< j,\,\, i \text{ or }  j \in T^c,(i,j)\in B_2 } (Z^{(1)}_{ij})^2 \geq  \frac{1}{\kappa'}(1+\delta')\log n\mid X^{(1)}\Big ) \1_{\cF_0\cap \cF_2}   .
\end{align}

Note that  by  \eqref{212} in   Lemma \ref{chi tail} with 
\begin{align*}
 \gamma = \varepsilon, \ L =  \frac{1}{\kappa'}(1+\delta')\log n, \ m\leq  4 \kappa^{1/4}\bar{k}   (1+4\delta')\frac{\log n}{\log \log  n}
\end{align*} (see \eqref{123}),  for sufficiently large $n$,
 the quantity \eqref{226}, and thus  the second term in \eqref{205}, is bounded by 
 \begin{align} \label{241}
 |V(C_1)|^{  \left \lfloor {4 \kappa^{1/4}\bar{k}  }   \right  \rfloor     }     n^{-\frac{1}{2 \kappa' }       (1+\delta') +  \frac{1}{2}   4 \kappa^{1/4}\bar{k}  (1+4\delta')\varepsilon + \varepsilon }   \leq   n^{-\frac{1}{2 \kappa' }  (1+\delta') +  2 \kappa^{1/4}\bar{k}   (1+4\delta')\varepsilon + 2\varepsilon }   .
 \end{align}
The above inequality follows from the bound for $|V(C_1)|$ in \eqref{124} and observing that   
  $$\Big( \frac{2+4\delta'}{\varepsilon} \frac{\log n}{\log \log n}\Big)^{  \left \lfloor {4 \kappa^{1/4}\bar{k}  }   \right  \rfloor     }   \overset{\eqref{185}}{\leq}     \Big ( \frac{2+4\delta'}{\varepsilon} \frac{\log n}{\log \log n}\Big)^{c\delta^{1/3}} \leq     n^\varepsilon $$ for large $n$ ($c>0$ is a constant depending on $\kappa$). 
The first factor in \eqref{241}, as several times before, appears due to a union bound over all possible choices {of $T^c\cap K.$} \\

 \noindent
\textit{-Bounding the third term:}
 Note that for  sufficiently small $\kappa>0$, for    large enough $\delta$ and $x\geq 1-\kappa$,    
\begin{align} \label{222}
1-3 \kappa^{1/4} \leq \frac{x+y-\theta- \sqrt{\kappa'} }{1+ \kappa_0^{1/4}}
\end{align}
(recall  $\kappa_0 = 40\kappa $).
In fact,  \eqref{222} holds if $(1-3 \kappa^{1/4})(1+  \kappa_0^{1/4}) \leq 1-\kappa - \theta- \sqrt{\kappa'}  $ for sufficiently large $\delta$ and small $\kappa>0$, which follows from the bound for $\theta$ in  \eqref{theta}.

Hence, using \eqref{225} and \eqref{222}, recalling the definition of $\cA_4$ in \eqref{a4},
the third term in \eqref{205} can be controlled by
\begin{align} \label{227}
 \mathbb{P}&\Big( \cA_4^c, x \geq 1-\kappa,   S_{22} \geq   (x+y-\theta   - \sqrt{\kappa'} ) \sqrt{2(1+\delta') \log n} \mid X^{(1)}  \Big)   \1_{\cF_0\cap \cF_2}  \nonumber  \\
 &\leq 
 \mathbb{P}\Big(\cA_4^c,   \sum _{i\neq j, i,j\in T }  |Z^{(1)}_{ij}   |  \geq  \bar{k} (1-3 \kappa^{1/4} ) \sqrt{2(1+\delta') \log n}  \mid X^{(1)}  \Big)   \1_{\cF_0\cap \cF_2}  \nonumber  \\
 &\leq   \mathbb{P}\Big(  \sum _{i\neq j, i,j\in T  }  |Z^{(1)}_{ij}   |   > \bar{k}(1+3 \kappa^{1/4}) \sqrt{2(1+\delta') \log n}    \mid X^{(1)}  \Big)   \1_{\cF_0\cap \cF_2}  \nonumber  \\
  &\leq   \mathbb{P}\Big(  \sum _{i<j,  i,j\in T }  (Z^{(1)}_{ij}   )^2   >  \frac{\bar{k}^2}{ \bar{k}(\bar{k}-1
 ) }  (1+3 \kappa^{1/4}) ^2(1+\delta') \log n    \mid X^{(1)}  \Big)   \1_{\cF_0\cap \cF_2} .
\end{align}
The second inequality follows from the definition of $\cA_4,$ (similar to \eqref{214}), while 
in the third inequality above, we used the Cauchy-Schwarz inequality and the fact $|T|\leq \bar{k}$. 
Hence, by the union bound and   \eqref{212} in    Lemma \ref{chi tail} with $\gamma = \varepsilon$,  $L =\frac{\bar{k}}{  \bar{k}-1
  }  (1+3 \kappa^{1/4}) ^2(1+\delta') \log n  $ and $m\leq   \bar{k}^2  $,  for sufficiently large $n$,
 the quantity \eqref{227}, and thus  the third term in \eqref{205}, is bounded by 
\begin{align} \label{242}
  |V(C_1)|^{\bar{k}  }     n^{-\frac{1}{2 }  \frac{\bar{k}}{\bar{k}-1} (1+3 \kappa^{1/4})  ^2 (1+\delta')  + \varepsilon }   \leq   n^{-\frac{1}{2 }  \frac{\bar{k}}{\bar{k}-1} (1+3 \kappa^{1/4}) ^2  (1+\delta') + 2\varepsilon }   .
\end{align}
{Here, we used the  bound for $|V(C_1)|$ in \eqref{124} and the upper bound for $\bar{k}$ in \eqref{185} under the event $\bar{k}\in \cM(\delta)$ .}\\

\textit{-Combining altogether:}
As mentioned already, the first and the last terms in \eqref{205} can be bounded by \eqref{169} and \eqref{172}  respectively. Thus, combining these with \eqref{241} and \eqref{242}, for sufficiently small $\kappa>0$ and  large $\delta$, for large enough $n$,
\begin{align} \label{405}
\mathbb{P}\Big(  \cA_4^c, \cA',   x \geq 1-\kappa,   & \sum_{1\leq i,j\leq n}  Z^{(1)}_{ij} v_iv_j   \geq  \sqrt{2(1+\delta') \log n}\mid X^{(1)}  \Big)  \1_{\cF_0\cap \cF_2} \nonumber \\
&\leq   Cn^{-\frac{1}{2 }  \frac{\bar{k}}{\bar{k}-1} (1+3 \kappa^{1/4})^2   (1+\delta') + 2\varepsilon } \leq   Cn^{-\frac{1}{2 }  \frac{\bar{k}}{\bar{k}-1}    (1+\delta') - 3    \kappa^{1/4}   \delta'  + 2\varepsilon }    .
\end{align}
{This follows from the fact that    for sufficiently small $\kappa>0$ and  large $\delta$, under the event $\cF_0\cap \cF_2$,  \eqref{242} is the slowest decaying term among itself, \eqref{169}, \eqref{172} and \eqref{241}. In fact,  using    $\varepsilon \leq  \frac{1}{\delta^4}$ and the bound  for    $\bar{k}$ in  \eqref{185},   under the event $\bar{k}\in \cM(\delta)$, 
\begin{align*}
\frac{1}{2 \kappa' }  (1+\delta') - 2 \kappa^{1/4}\bar{k}     (1+4\delta')\varepsilon - 2\varepsilon  &=\Big  (\frac{1}{2\kappa'} + o_\delta(1) \Big ) \delta, \\ 
 \frac{1}{2 }  \frac{\bar{k}}{\bar{k}-1} (1+3 \kappa^{1/4})^2   (1+\delta') - 2\varepsilon &= \Big ( \frac{1}{2 }  (1+3 \kappa^{1/4})^2  +o_\delta (1)\Big ) \delta   .
\end{align*}
 Hence, recalling  the definition of  $\kappa'$ in \eqref{440}, for  small  $\kappa>0$ and large $\delta$,   the  quantity   \eqref{242} slowly decays than \eqref{241}.  Also, by comparing the above asymptotic  with \eqref{410} and \eqref{412}, one can deduce that  the quantity   \eqref{242} slowly decays than   \eqref{169} and  \eqref{172} for small $\kappa$ and  large $\delta$.

 Thus, by proceeding  as  in  \eqref{179},   for sufficiently large $\delta$,  
\begin{align*}
\mathbb{P}\Big(  \cA_4^c, \cA',  & x \geq 1-\kappa,   \sum_{1\leq i,j\leq n}  Z^{(1)}_{ij} v_iv_j   \geq  \sqrt{2(1+\delta') \log n}   \Big)   \leq   n^{- \psi(\delta) - c(\delta) +2r_\delta(\varepsilon)}   .
\end{align*}
  In fact, one can bound this  quantity by the sum of four quantities via the argument of  \eqref{179}. Using  \eqref{405} and the bound for $\bar{k}$ in \eqref{185}, for large $\delta$, the corresponding first term in  \eqref{179} can be bounded by $Cn^{-\psi(\delta)  - c\delta  }$ for some $c>0$, and other three terms  can be bounded by \eqref{323}, \eqref{324} and \eqref{129} respectively.  Combining these together, using the fact that  $c(\delta)<1<c\delta$ for large $\delta$,      we obtain the above inequality. 
  }
Applying \eqref{163} and then  Theorem \ref{main theorem 1},
\begin{align*}
\lim_{n\ri} \mathbb{P}\Big(  \cA_4^c,  \cA',  & x \geq 1-\kappa \mid  \cU_\delta  \Big)   =0 .
\end{align*} 
Hence,
by  \eqref{195} and  \eqref{238},
\begin{align} \label{228}
\lim_{n\ri} \mathbb{P}\Big(\cA_4    \mid \cU_\delta\Big)=1.
\end{align}

\noindent 
\textbf{Finishing the proof.}  Finally,    using  \eqref{231} and \eqref{228}, we finish the proof.
Define the event
\begin{align*}
\cA_5 :=  \{  \bar{k}\in \cM(\delta)\}.
\end{align*}
{ By \eqref{909}, recalling $\cA_5 \subset \cF_2$,
\begin{align} \label{401}
\lim_{n\ri} \mathbb{P}\Big(\cA_5 \mid \cU_\delta \Big)=1. 
\end{align}
Now, define the event
\begin{align*}
\cA_6 :=\cA'\cap \cA_3\cap \cA_4    \cap \cA_5.
\end{align*}
Since  $\cA'$,  $\cA_3$,  $\cA_4$ and  $\cA_5$ are typical events conditioned on $\cU_\delta$ (see  \eqref{238},    \eqref{231},  \eqref{228} and \eqref{401} respectively), }
\begin{align}  \label{229}
\lim_{n\ri} \mathbb{P}\Big(\cA_6\mid \cU_\delta \Big)=1.
\end{align}

We next verify that the event $\cA_6$ implies
the desired uniformity of Gaussians claimed in the statement of the theorem. As indicated earlier, the proof involves technical manipulations involving the Cauchy-Schwarz inequality to relate the $\ell_1$ and $\ell_2$ norms. 

For the ease of reading, let us recall the events
\begin{align*}
\cA_3 &= \Big\{ 2  \Big( \frac{\bar{k}}{\bar{k}-1} - \frac{\rho}{\bar{k}}  \Big) (1+\delta') \log n\leq  \sum _{(i,j)\in B_2}  (Z^{(1)}_{ij}  ) ^2  \leq  2   \Big( \frac{\bar{k}}{\bar{k}-1} +  \frac{\rho}{\bar{k}}  \Big) (1+\delta') \log n\Big\},\\
\cA_4 &=\Big \{ \bar{k} (1-3 \kappa^{1/4} ) \sqrt{2(1 + \delta')\log n} \leq \sum_{i\neq j, i,j\in T} | Z^{(1)}_{ij} |  \leq  \bar{k} (1+3 \kappa^{1/4}) \sqrt{2(1 + \delta')\log n} \Big \}.
\end{align*}

 Note that using the fact  $\rho = 16\kappa$ and \eqref{361}, for sufficiently  large  $\delta$, the event 
$   \cA_3 \cap \cA_4\cap \cA_5$ implies that 
\begin{align*}
\frac{1}{2} \sum_{i\neq j,i'\neq j', i,j,i',j'\in T} &(|Z^{(1)}_{ij}|  - |Z^{(1)}_{i'j'} |)^2 \\
&= |T| (|T|-1)    \Big( \sum _{i\neq j, i,j\in T}  (Z^{(1)}_{ij}   )^2 \Big ) - \Big   (  \sum _{i\neq j, i,j\in T}  |Z^{(1)}_{ij}   |\Big )^2 
\\
 &\leq 2  |T| (|T|-1)       \Big( \frac{\bar{k}}{\bar{k}-1} +  \frac{\rho}{\bar{k}}  \Big) (1+\delta') \log n -2 \bar{k}^2 (1-3 \kappa^{1/4} )^2 (1+\delta') \log n \\
 &\leq  {   ( 32 ( \bar{k}-1) \kappa + 12 \kappa^{1/4} \bar{k}^2 )   (1+\delta') \log n \leq   C  \kappa^{1/4}  \bar{k}^2   (1+\delta') \log n},
\end{align*} 
{
where we used $|T|\leq \bar{k}$ and $\kappa \leq \kappa^{1/4}$  in the second  and the  last   inequality respectively.}
From this,
using the argument in  \eqref{340}, setting $S' =  \sum_{i,j\in T} |Z^{(1)}_{ij} |$, one can deduce that 
\begin{align} \label{233}
 \sum_{i\neq j, i,j\in T} \Big(|Z^{(1)}_{ij}|  -  \frac{1}{|T|(|T|-1)} S' \Big)^2 \leq  C  \kappa^{1/4}     (1+\delta') \log n .
\end{align}
We check that  under the event $\cA'\cap \cA_4\cap  \cA_5$, there exists $\iota(\kappa) $ with $\lim_{\kappa \rightarrow 0} \iota   = 0$ such that
\begin{align}  \label{234}
\Big\vert  \frac{1}{|T|(|T|-1)} S'  - \frac{1}{h(\delta)}  \sqrt{2(1 + \delta')\log n} \Big\vert   \leq \Big ( \iota(\kappa)  + \frac{C}{h(\delta)} \Big)  \frac{1}{h(\delta)}  \sqrt{2(1 + \delta')\log n}. 
\end{align}
{In fact, first note that  by \eqref{230}, $|T| \geq  (1-\kappa_0^{1/4}) \bar{k}$  under the event $\cA'$. Also, $\bar{k} \geq h(\delta)$ under $\cA_5$ and we have the upper bound for $S'$ under $\cA_4$. Hence, combining these ingredients together, under the event $\cA'\cap \cA_4\cap  \cA_5$,   }
\begin{align*}
 \frac{1}{|T|(|T|-1)}  S' & \leq    \frac{1+3 \kappa^{1/4}}{1-\kappa_0^{1/4}     } \frac{1}{ \bar{k}(1-\kappa_0^{1/4}) -1} \sqrt{2(1 + \delta')\log n} \\
 &  \leq (1+10\kappa^{1/4} )  \frac{1}{ h(\delta) (1-\kappa_0^{1/4}) -1} \sqrt{2(1 + \delta')\log n}   \\
  &  \leq (1+10\kappa^{1/4} ) \Big(1+ 2\kappa_0^{1/4}  + \frac{2}{h(\delta)}\Big) \frac{1}{h(\delta)} \sqrt{2(1 + \delta')\log n} 
\end{align*}
{(recall $\kappa_0 = 40\kappa$, see \eqref{402})},
and the similar lower bound holds. This gives \eqref{234}.

Hence,
\eqref{233} and \eqref{234}  imply that for some $\iota'(\kappa) $ with $\lim_{\kappa \rightarrow 0} \iota' = 0$, {under the event $\cA_6$ (recall that $\cA_6 =\cA'\cap \cA_3\cap \cA_4    \cap \cA_5$) },
\begin{align*}
 \sum_{i\neq j, i,j\in T} \Big(|Z^{(1)}_{ij}|  -  \frac{1}{h(\delta)}  \sqrt{2(1 + \delta')\log n}  \Big)^2 \leq  \Big( \iota'(\kappa)  +  \frac{C}{h(\delta)}\Big )    (1+\delta') \log n.
\end{align*}
By  Cauchy-Schwarz inequality,  using the fact that $|T|\leq \bar{k}$, under the event $\cA_6$,
\begin{align} \label{235}
 \sum_{i\neq j, i,j\in T} \Big\vert |Z^{(1)}_{ij}|  -  \frac{1}{h(\delta)}  \sqrt{2(1 + \delta')\log n}  \Big\vert  \leq   C h(\delta)  \sqrt{ \Big( \iota'(\kappa)  +  \frac{C}{h(\delta)}\Big )     (1+\delta') \log n }.
\end{align}
Note that  under the event $\cA_5$,
\begin{align} \label{236}
\Big\vert  \sum_{i\neq j, i,j\in T} | Z_{ij} | - \sum_{i,j\in T} | Z^{(1)}_{ij} |  \Big\vert  \leq  \sum_{i,j\in T} | Z^{(2)}_{ij} |  \leq  \bar{k} ^2 \sqrt{\varepsilon \log \log n} \leq  C h(\delta)^2 \sqrt{\varepsilon \log \log n}    .
\end{align}
Hence, by the above two inequalities,  under the event $\cA_6$, for sufficiently large $n$,
\begin{align} \label{901}
 \sum_{i\neq j, i,j\in T} \Big\vert |Z_{ij}|  -  \frac{1}{h(\delta)}  \sqrt{2(1 + \delta')\log n}  \Big\vert  \leq   C h(\delta) \sqrt{ \Big( \iota'(\kappa)  +  \frac{C}{h(\delta)}\Big )     (1+\delta') \log n }.
\end{align}
By \eqref{191} and recalling $\varepsilon \leq \frac{1}{\delta^4}$, for large enough $\delta$,
the above implies
\begin{align} \label{237}
 \sum_{i\neq j,i,j\in T} \Big\vert |Z_{ij}|  -  \frac{1}{h(\delta)}  \sqrt{2(1 + \delta)\log n}  \Big\vert  \leq   C h(\delta) \sqrt{ \Big( \iota'(\kappa)  +  \frac{C}{h(\delta)}\Big )     (1+\delta) \log n }.
\end{align}  
{In fact, by the triangle inequality, the difference between LHS of \eqref{901} and \eqref{237} is bounded by
\begin{align*}
 \frac{\sqrt{\varepsilon} (1+\delta) \sqrt{\log n}}{h(\delta)} h(\delta)^2 \overset{\eqref{190}}{\leq} C  h(\delta) \frac{\sqrt{\log n}}{\delta} \overset{\eqref{argmaxloc}}{\leq} 
 C h(\delta) \sqrt{ \Big( \iota'(\kappa)  +  \frac{C}{h(\delta)}\Big )     (1+\delta) \log n }. 
\end{align*} 
 }
{Since $ \kappa_0 = 40\kappa$, by \eqref{230}, under the event $\cA_6$,  we have $|T| \geq (1-c\kappa^{1/4})\bar{k}$.  In addition, since  $h(\delta) \geq c\delta^{1/3}$, one can simplify  the term $ \iota'(\kappa) +  \frac{C}{h(\delta)}$ 
   to $\zeta(\kappa)$ with $\lim_{\kappa \rightarrow 0} \zeta(\kappa)  = 0$ if $\delta$ is chosen large enough depending on $\kappa$. Dividing both sides by $h(\delta)^2$ and  using
   \eqref{229} completes the proof.
}

\begin{remark}Note that \eqref{237} gives a bound depending on both $\delta$ and $\kappa$ and only on taking $\delta$ large enough depending on $\kappa$ yields the theorem. Further, even though we provided   sharp bounds for both $\ell_1$ and  $\ell_2$ norms, in fact, a lower bound for the former and an upper bound for the latter suffices.
\end{remark}

\section{Lower tail large deviations} \label{section 4}

We end with the short argument establishing the  large deviation probability of the lower tail, Theorem \ref{main theorem 2}.

\begin{proof}[Proof of Theorem \ref{main theorem 2}]
The upper bound   is an easy consequence of the inequality  \eqref{bound by max}. In fact, by    Lemma \ref{max gaussian}, \ref{lemma non zero} and \eqref{bound by max},
\begin{align*}
\mathbb{P}(\lambda_1  (Z)  \leq \sqrt{2(1-\delta) \log n} )  & \leq    \mathbb{P}( \max Z_{ij}   \leq \sqrt{2(1-\delta) \log n} )  \\
&\leq  \mathbb{E} \left(\mathbb{P}( \max Z_{ij} \leq \sqrt{2(1-\delta) \log n}  \mid X ) \1_{E_0}\right) + \mathbb{P}(E_0^c)  \\
&\leq  e^{-c' \frac{n^\delta}{\sqrt{\log n}}} + e^{-cn}.
\end{align*}

We now prove a matching lower bound.    
Define an event $\cS_\delta$, measurable with respect to $X$, by
\begin{align*}
\cS_\delta := \Big\{
\lambda_1 (X) \leq (1+\delta) \sqrt{\frac{\log n }{\log \log n}} \Big\}.
\end{align*}
Notice that $\P(S_{\delta}) \to 1$ by 
Lemma  \ref{lemma 32}.
 Since $ \lambda_1 (Z^{(2)})  \leq   \sqrt{\varepsilon \log \log n}  \cdot  \lambda_1 (X) $, conditionally on $X$,  under the event $\cS_\delta$, it holds that 
\begin{align*}
 \lambda_1(Z^{(2)}) \leq \sqrt{\varepsilon}(1+\delta) \sqrt{\log n}  .
\end{align*}
Since $\lambda_1 (Z)\leq  \lambda_1(Z^{(1)}) +\lambda_1(Z^{(2)}) $,
 \begin{align*}
 \mathbb{P} (\lambda_1  (Z) \leq \sqrt{2(1-\delta) \log n} ) \geq \P(\lambda_1(Z^{(1)}) \leq   \sqrt{2(1-\delta'') \log n},\,\, \lambda_1(Z^{(2)}) \leq \sqrt{\varepsilon}(1+\delta) \sqrt{ \log n}), \ 
 \end{align*}
where $\delta''>0$ is defined by $\sqrt{2(1-\delta'')} = \sqrt{2(1-\delta)} - \sqrt{\varepsilon}(1+\delta)$.
Recalling the definition of $\cF_0=\cD_{4\delta'}\cap \cC_{4\delta'} \cap \cE_{4\delta'} \cap {\sf{Few-cycles}}$
from \eqref{147}, analogously we define $\cF_3 :=  \cD_{4\delta''}\cap \cC_{4\delta''} \cap \cE_{4\delta''} \cap {\sf{Few-cycles}}\cap \cS_\delta$, we have
\begin{align} \label{133}
 \mathbb{P} (\lambda_1  (Z) \leq \sqrt{2(1-\delta) \log n} )  \geq  \mathbb{E} \left[ \mathbb{P}(\lambda_1(Z^{(1)}) \leq   \sqrt{2(1-\delta'') \log n} \mid X, X^{(1)} ) \1_{\cF_3}\right].
\end{align}
Above we use that $\cF_3$ is measurable with respect to the sigma algebra generated by $\{X^{(1)},X\}.$
We now estimate $\mathbb{P}(\lambda_1(Z^{(1)}) \leq   \sqrt{2(1-\delta'') \log n} \mid X, X^{(1)} ) $ under the event $\cF_3$ and finally we will use that $\cF_3$ is likely. We will crucially use throughout the proof that given $X^{(1)},$ $Z^{(1)}$ and $X$ are conditionally independent. 

Let $C_1,\cdots,C_m$ be $X^{(1)}$'s connected components and denote by $k_i$ the size of maximal clique in $C_i$. 
Let  \begin{align*}
I := \{i = 1,\cdots,m: k_i \geq 3\},\quad J:= \{i = 1,\cdots, m: k_i=2\}
\end{align*} 
and define  $\xi :=  (2 \varepsilon^{1/2} +  8 \varepsilon \delta'' )^{1/4}$. By Proposition  \ref{prop} with $\gamma = \varepsilon$ and $\eta = \varepsilon^{1/4}$,  for sufficiently small  $\varepsilon>0$, under the event $\cF_3$, for $i\in I$,
\begin{align} \label{135}
\mathbb{P}(\lambda_1(Z^{(1)}_i) \geq \sqrt{2(1-\delta'') \log n} \mid X, X^{(1)})   < n^{-\frac{1}{2} (1- \xi)^2 (1-\delta'') + \frac{1+4\delta''}{2}\varepsilon^{1/2} + \varepsilon}
\end{align}
using the fact that $\frac{k}{k-1}\ge 1,$
and for $i\in J$,
\begin{align} \label{136}
\mathbb{P}(\lambda_1(Z^{(1)}_i) \geq \sqrt{2(1-\delta'') \log n} \mid  X, X^{(1)})   < n^{-  (1- \xi)^2 (1-\delta'') + \frac{1+4\delta''}{2}\varepsilon^{1/2} + \varepsilon}.
\end{align}
Since $|I|< \log n$ under the event ${\sf {Few-Cycles}}$,  by \eqref{135} and \eqref{136},
\begin{align} \label{134}
\mathbb{P}(\lambda_1(Z^{(1)}_i) & \leq \sqrt{2(1-\delta) \log n}, \ \forall i \mid   X, X^{(1)}   )  \nonumber \\
& > (1- n^{-  (1- \xi)^2 (1-\delta'') + \frac{1+4\delta''}{2}\varepsilon^{1/2} + \varepsilon})^n (1-n^{-\frac{1}{2} (1- \xi)^2 (1-\delta'') + \frac{1+4\delta''}{2}\varepsilon^{1/2} + \varepsilon})^{\log n} \nonumber \\
&\geq \frac{1}{2} \exp(-n^{1-(1- \xi)^2 (1-\delta'') + \frac{1+4\delta''}{2}\varepsilon^{1/2} + \varepsilon}).
\end{align}
Since $\mathbb{P}(\cF_3) \geq \frac{1}{2}$ and $\varepsilon>0$ is arbitrary small,  by \eqref{133} and \eqref{134}, proof is concluded.
\end{proof}

\appendix

\section{Key estimates}

In this appendix, we include the outstanding proofs of basic properties about Gaussian random variables.
as well as the proof of Lemma \ref{lemma non zero} involving a straightforward application of Chernoff's bound.

\begin{proof}[Proof of Lemma \ref{max gaussian}]
Recalling the basic tail bounds from \eqref{tail},
 for some constant $c_1>0$,
\begin{align*}
\mathbb{P}( \max_{i=1,\cdots,m} X_i\geq   \sqrt{2(1+\delta) \log n} )   &= 1- (1- \mathbb{P}(X_1 \geq  \sqrt{2(1+\delta) \log n} )) ^m  \geq  c_1 \frac{1}{n^{\delta} \sqrt{\log n}}.
\end{align*}
Similarly, for some constant $c_2>0$,
\begin{align*}
\mathbb{P}( \max_{i=1,\cdots,m} X_i\leq   \sqrt{2(1-\delta) \log n} )   &= ( 1 -  \mathbb{P}(X_1 \geq   \sqrt{2(1-\delta) \log n} ) )^m  
\leq   e^{-c_2 \frac{n^\delta}{\sqrt{\log n}}}.
\end{align*}
\end{proof}

\begin{proof}[Proof of Lemma \ref{lemma non zero}]
We use the Chernoff's bound for Bernoulli variables for $q>p$:
\begin{align}
\mathbb{P}( \text{Bin}(m,p) \geq mq) \leq e^{-m I_p(q)},
\end{align}
where $I_p(x): = x\log \frac{x}{p}+(1-x) \log \frac{1-x}{1-p}$ is the relative entropy function.
Thus,
\begin{align} \label{312}
\mathbb{P}\Big( \text{Bin}\Big( \frac{n(n-1)}{2}, 1-  \frac{d}{n} \Big) \geq  \frac{n(n-1)}{2} \Big(1-\frac{d}{4n}\Big)\Big) \leq e^{-\frac{n(n-1)}{2} I_{1- \frac{d}{n} }(1-\frac{d}{4n} )},
\end{align}
Using $\log (1+x) \geq \frac{x}{2}$ for small positive $x$,
\begin{align} \label{313}
I_{1-\frac{d}{n}}\Big(1-\frac{d}{4n} \Big)    \geq \Big( 1-\frac{d}{4n} \Big)  \frac{3d}{8(n-d)}  + \frac{d}{4n} \log \frac{1}{4} \geq  \frac{C_1}{n-d} - \frac{C_2}{n^2}.
\end{align}
Hence,  by \eqref{312}  and \eqref{313}, there exists a constant $c>0$ such that for sufficiently large $n$, 
\begin{align*}
\mathbb{P}\Big( \text{Bin}\Big( \frac{n(n-1)}{2}, 1-  \frac{d}{n} \Big) \geq  \frac{n(n-1)}{2} \Big(1-\frac{d}{4n}\Big)\Big)  \leq e^{-cn}.
\end{align*}
This implies that
\begin{align*}
\mathbb{P}\Big( \text{Bin}\Big( \frac{n(n-1)}{2}, \frac{d}{n} \Big) \leq  \frac{n(n-1)}{2} \frac{d}{4n}\Big)  \leq e^{-cn},
\end{align*}
which concludes the proof.

\end{proof}

\begin{proof}[Proof of Lemma \ref{chi tail}]
Recall that we are aiming to show \begin{align*}
\mathbb{P}(\tilde{Y}_1^2+\cdots+\tilde{Y}_m^2 \geq   L )  \leq  C^m e^{-\frac{1}{2}L} e^{\frac{1}{2}m} \Big(\frac{L}{m}\Big)^{m}  e^{\frac{1}{2} \varepsilon m \log \log n},
\end{align*}
and  in particular, for any  $a,b,c>0$, if $ m \leq b\frac{\log n}{\log \log n}+c$ and $L = a\log n$, then, for any $\gamma>0$,  for sufficiently large $n$,
\begin{align} \label{212appendix}
\mathbb{P}(\tilde{Y}_1^2+\cdots+\tilde{Y}_m^2 \geq   a\log n )  \leq    n^{-\frac{a}{2} + \frac{\varepsilon b}{2} + \gamma} .
\end{align}

By exponential Chebyshev's  bound, for any $t>0$,
\begin{align} \label{500}
\mathbb{P}(\tilde{Y}_1^2+\cdots+\tilde{Y}_m^2 \geq    L )   \leq e^{-  tL }  (\mathbb{E}e^{t\tilde{Y}_1^2})^m.
\end{align}
{
Using the lower bound for the tail \eqref{tail}, the probability density function of  $\tilde{Y}$, denoted by $\tilde{f}(x)$ for $|x| \geq \sqrt{\varepsilon \log \log  n}  $, satisfies 
\begin{align*}
 \tilde{f}(x) \leq  \frac{C}{  ( \sqrt{\varepsilon \log \log  n}  ) ^{-1}  e^{-\frac{1}{2}\varepsilon \log \log n} }  e^{-\frac{1}{2}x^2}  = C\sqrt{\varepsilon \log \log  n}   e^{\frac{1}{2}\varepsilon \log \log n}   e^{-\frac{1}{2}x^2}  .
\end{align*}
Hence, using the upper bound for the tail \eqref{tail}, by making a change of variable $x = \frac{1}{\sqrt{1-2t}}y$,
\begin{align*} 
\mathbb{E}e^{t\tilde{Y}_1^2} &\leq  C \sqrt{\varepsilon \log \log  n} e^{\frac{1}{2}\varepsilon \log \log n} \int_{\sqrt{\varepsilon \log \log  n}}^\infty  e^{tx^2} e^{-\frac{1}{2}x^2} dx \\
&= C \sqrt{\varepsilon \log \log  n} e^{\frac{1}{2}\varepsilon \log \log n}  \frac{1}{\sqrt{1-2t}}  \int_{ \sqrt{1-2t} \sqrt{\varepsilon \log \log  n}}^\infty   e^{-\frac{1}{2}y^2} dy    \\
& \leq  C \sqrt{\varepsilon \log \log  n} e^{\frac{1}{2}\varepsilon \log \log n}  \frac{1}{\sqrt{1-2t}}   \frac{1}{\sqrt{1-2t} \sqrt{\varepsilon \log \log  n}  }     e^{-\frac{1}{2}(1-2t) \varepsilon \log \log n} =C      \frac{1}{1-2t}e^{t \varepsilon \log \log n}.
\end{align*}
Applying this to \eqref{500},
\begin{align*}
\mathbb{P}(\tilde{Y}_1^2+\cdots+\tilde{Y}_m^2 \geq    L )   \leq C^m e^{-tL}  \frac{1}{(1-2t)^m} e^{t\varepsilon m \log \log n}.
\end{align*}
We take  $t = \frac{1}{2} (1- \frac{m}{  L   } ) < \frac{1}{2}$ (recall that $L>m$) in order to balance two terms  $e^{-tL}$ and $  \frac{1}{(1-2t)^m} $. We conclude the proof of \eqref{211}.
}

{We now show \eqref{212appendix}. We first check that for any $L>0$, a function $x\mapsto (\frac{L}{x})^x$ is increasing on $(0, \frac{L}{e})$.  This is because the derivative of $x \log (\frac{L}{x})$, which  is given by $\log (\frac{L}{x}) - 1$,  is positive for $x\in (0, \frac{L}{e})$.
Hence,  for any  $\gamma>0$,  for sufficiently large $n$, the LHS of \eqref{212appendix} is bounded by 
\begin{align*}
C^{b\frac{\log n}{\log \log n} +c}  n^{-\frac{a}{2} + \frac{\varepsilon b}{2}} n^{\frac{b}{2\log \log n}}  \Big (  \frac{a }{b} \log \log n  \Big)^{b\frac{\log n}{\log \log n} +c} \leq    n^{-\frac{a}{2} + \frac{\varepsilon b}{2}+\gamma}.
\end{align*}
Here, we used the fact that for large $n$,
$
(c_1 \log \log n)^{c_2 \frac{\log n}{\log \log n}} \leq n^{\frac{\gamma}{2}}.
$
 }

\end{proof}


 \bibliography{ldp_ref}

\begin{thebibliography}{10}

\bibitem{adk}
Johannes Alt, Rapha{\"e}l Ducatez, and Antti Knowles.
\newblock Extremal eigenvalues of critical erd{\H{o}}s-r{\'e}nyi graphs.
\newblock {\em arXiv preprint arXiv:1905.03243}, 2019.

\bibitem{adg}
G~Ben Arous, Amir Dembo, and Alice Guionnet.
\newblock Aging of spherical spin glasses.
\newblock {\em Probability theory and related fields}, 120(1):1--67, 2001.

\bibitem{freeentropy}
Gerard~Ben Arous and Alice Guionnet.
\newblock Large deviations for {W}igner's law and {V}oiculescu's
  non-commutative entropy.
\newblock {\em Probability Theory and Related Fields}, 108(4):517--542, 1997.

\bibitem{augeri2}
Fanny Augeri.
\newblock Large deviations principle for the largest eigenvalue of wigner
  matrices without gaussian tails.
\newblock {\em Electron. J. Probab.}, 21:49 pp., 2016.

\bibitem{augeri1}
Fanny Augeri.
\newblock Nonlinear large deviation bounds with applications to traces of
  wigner matrices and cycles counts in {E}rd{\H{o}}s-{R}\'{e}nyi graphs.
\newblock {\em Annals of Probability, to appear}, 2020.

\bibitem{guionnet2}
Fanny Augeri, Alice Guionnet, and Jonathan Husson.
\newblock Large deviations for the largest eigenvalue of sub-gaussian matrices.
\newblock {\em arXiv preprint arXiv:1911.10591}, 2019.

\bibitem{austin}
Tim Austin.
\newblock The structure of low-complexity gibbs measures on product spaces.
\newblock {\em Annals of Probability}, 47(6):4002--4023, 2019.

\bibitem{van1}
Afonso~S Bandeira and Ramon Van~Handel.
\newblock Sharp nonasymptotic bounds on the norm of random matrices with
  independent entries.
\newblock {\em Annals of Probability}, 44(4):2479--2506, 2016.

\bibitem{ab_rb}
Anirban Basak and Riddhipratim Basu.
\newblock Upper tail large deviations of the cycle counts in
  {E}rd{\H{o}}s-{R}\'{e}nyi graphs in the full localized regime.
\newblock {\em arXiv:1912.11410}, 2019.

\bibitem{BM17}
Anirban Basak and Sumit Mukherjee.
\newblock Universality of the mean-field for the potts model.
\newblock {\em Probability Theory and Related Fields}, 168(3-4):557--600, 2017.

\bibitem{bbk}
Florent Benaych-Georges, Charles Bordenave, and Antti Knowles.
\newblock Largest eigenvalues of sparse inhomogeneous erd{\H o}s--r{\'e}nyi
  graphs.
\newblock {\em Annals of Probability}, 47(3):1653--1676, 05 2019.

\bibitem{bbk_dense}
Florent Benaych-Georges, Charles Bordenave, and Antti Knowles.
\newblock Spectral radii of sparse random matrices.
\newblock In {\em Annales de l'Institut Henri Poincar{\'e}, Probabilit{\'e}s et
  Statistiques}, volume~56, pages 2141--2161. Institut Henri Poincar{\'e},
  2020.

\bibitem{ganguly1}
Bhaswar~B Bhattacharya, Sohom Bhattacharya, and Shirshendu Ganguly.
\newblock Spectral edge in sparse random graphs: Upper and lower tail large
  deviations.
\newblock {\em arXiv preprint arXiv:2004.00611}, 2020.

\bibitem{uppertail_eigenvalue}
Bhaswar~B. Bhattacharya and Shirshendu Ganguly.
\newblock Upper tails for edge eigenvalues of random graphs.
\newblock {\em SIAM Journal on Discrete Mathematics, to appear}, 2020.

\bibitem{BGLZ}
Bhaswar~B. Bhattacharya, Shirshendu Ganguly, Eyal Lubetzky, and Yufei Zhao.
\newblock Upper tails and independence polynomials in random graphs.
\newblock {\em Advances in Mathematics}, 319(313--347), 2017.

\bibitem{bollobas}
B{\'e}la Bollob{\'a}s.
\newblock {\em Random graphs}.
\newblock Number~73. Cambridge university press, 2001.

\bibitem{bc}
Charles Bordenave and Pietro Caputo.
\newblock A large deviation principle for wigner matrices without gaussian
  tails.
\newblock {\em Annals of Probability}, 42(6):2454--2496, 2014.

\bibitem{empirical_neighborhood}
Charles Bordenave and Pietro Caputo.
\newblock Large deviations of empirical neighborhood distribution in sparse
  random graphs.
\newblock {\em Probability Theory and Related Fields}, 163(1-2):149--222, 2015.

\bibitem{bordenave2017mean}
Charles Bordenave, Arnab Sen, and B{\'a}lint Vir{\'a}g.
\newblock Mean quantum percolation.
\newblock {\em Journal of the European Mathematical Society},
  19(12):3679--3707, 2017.

\bibitem{chatterjee}
Sourav Chatterjee.
\newblock {\em Superconcentration and related topics}, volume~15.
\newblock Springer, 2014.

\bibitem{CD16}
Sourav Chatterjee and Amir Dembo.
\newblock Nonlinear large deviations.
\newblock {\em Adv. Math.}, 299:396--450, 2016.

\bibitem{CV11}
Sourav Chatterjee and S.~R.~S. Varadhan.
\newblock The large deviation principle for the {E}rd{\H o}s-{R}{\'e}nyi random
  graph.
\newblock {\em European J. Combin.}, 32(7):1000--1017, 2011.

\bibitem{CV12}
Sourav Chatterjee and S.~R.~S. Varadhan.
\newblock Large deviations for random matrices.
\newblock {\em Comm. Stoch. Analysis}, 6(1):1--13, 2012.

\bibitem{CD18}
Nick Cook and Amir Dembo.
\newblock Large deviations of subgraph counts for sparse
  {E}rd{\H{o}}s-{R}\'enyi graphs.
\newblock {\em arXiv:1809.11148}, 2018.

\bibitem{DZ}
Amir Dembo and Ofer Zeitouni.
\newblock {\em Large deviations techniques and applications}, volume~38 of {\em
  Stochastic Modelling and Applied Probability}.
\newblock Springer-Verlag, Berlin, 2010.
\newblock Corrected reprint of the second (1998) edition.

\bibitem{eldan}
Ronen Eldan.
\newblock Gaussian-width gradient complexity, reverse log-{S}obolev
  inequalities and nonlinear large deviations.
\newblock {\em Geom. Funct. Anal.}, to appear, 2018.

\bibitem{erdHos2012spectral}
L{\'a}szl{\'o} Erd{\H{o}}s, Antti Knowles, Horng-Tzer Yau, and Jun Yin.
\newblock Spectral statistics of erd{\H{o}}s-r{\'e}nyi graphs ii: Eigenvalue
  spacing and the extreme eigenvalues.
\newblock {\em Communications in Mathematical Physics}, 314(3):587--640, 2012.

\bibitem{erdHos2013spectral}
L{\'a}szl{\'o} Erd{\H{o}}s, Antti Knowles, Horng-Tzer Yau, and Jun Yin.
\newblock Spectral statistics of erd{\H{o}}s--r{\'e}nyi graphs i: local
  semicircle law.
\newblock {\em The Annals of Probability}, 41(3B):2279--2375, 2013.

\bibitem{random}
Alan Frieze and Micha{\l} Karo{\'n}ski.
\newblock {\em Introduction to random graphs}.
\newblock Cambridge University Press, 2016.

\bibitem{guionnet1}
Alice Guionnet and Jonathan Husson.
\newblock Large deviations for the largest eigenvalue of rademacher matrices.
\newblock {\em Annals of Probability, to appear}, 2020.

\bibitem{upper_tail_localization}
Matan Harel, Frank Mousset, and Wojciech Samotij.
\newblock Upper tails via high moments and entropic stability.
\newblock {\em arXiv:1904.08212}, 2019.

\bibitem{KS}
Michael Krivelevich and Benny Sudakov.
\newblock The largest eigenvalue of sparse random graphs.
\newblock {\em Combinatorics, Probability and Computing}, 12(1):61--72, 2003.

\bibitem{latala}
Rafa{\l} Lata{\l}a.
\newblock Some estimates of norms of random matrices.
\newblock {\em Proceedings of the American Mathematical Society},
  133(5):1273--1282, 2005.

\bibitem{van3}
Rafa{\l} Lata{\l}a, Ramon van Handel, and Pierre Youssef.
\newblock The dimension-free structure of nonhomogeneous random matrices.
\newblock {\em Inventiones mathematicae}, 214(3):1031--1080, 2018.

\bibitem{LZ-dense}
Eyal Lubetzky and Yufei Zhao.
\newblock On replica symmetry of large deviations in random graphs.
\newblock {\em Random Structures Algorithms}, 47(1):109--146, 2015.

\bibitem{LZ-sparse}
Eyal Lubetzky and Yufei Zhao.
\newblock On the variational problem for upper tails in sparse random graphs.
\newblock {\em Random Structures Algorithms}, 50(3):420--436, 2017.

\bibitem{turan}
Theodore~S Motzkin and Ernst~G Straus.
\newblock Maxima for graphs and a new proof of a theorem of {T}ur{\'a}n.
\newblock {\em Canadian Journal of Mathematics}, 17:533--540, 1965.

\bibitem{bk}
David Reimer.
\newblock Proof of the van den berg--kesten conjecture.
\newblock {\em Combinatorics, Probability and Computing}, 9(1):27--32, 2000.

\bibitem{seginer}
Yoav Seginer.
\newblock The expected norm of random matrices.
\newblock {\em Combinatorics, Probability and Computing}, 9(2):149--166, 2000.

\bibitem{tikhomirov}
Konstantin Tikhomirov and Pierre Youssef.
\newblock Outliers in spectrum of sparse wigner matrices.
\newblock {\em Random Structures \& Algorithms}, 2020.

\bibitem{van2}
Ramon Van~Handel.
\newblock On the spectral norm of gaussian random matrices.
\newblock {\em Transactions of the American Mathematical Society},
  369(11):8161--8178, 2017.

\bibitem{yan}
Jun Yan.
\newblock Nonlinear large deviations: Beyond the hypercube.
\newblock {\em Annals of Applied Probability, to appear}, 2020.

\end{thebibliography}
\bibliographystyle{plain}

\end{document}